\documentclass{amsart}

\usepackage{amsthm}
\usepackage{thmtools}

\usepackage[breaklinks, colorlinks=true, linktocpage]{hyperref} 
\usepackage[capitalize]{cleveref}   
\usepackage{amsmath,amsfonts,amssymb,amscd,comment,euscript}
\usepackage{etoolbox}
\usepackage{stmaryrd}
\usepackage[all]{xy}
\usepackage{graphicx}
\usepackage{enumerate, enumitem} 
\usepackage{pifont}
\usepackage[usenames,dvipsnames]{xcolor}
\usepackage{tikz-cd}
\usepackage{xcolor}
\usepackage{soul}
\usepackage{stmaryrd} 

\renewcommand\rightmark{\footnotesize A technique for computing oriented cohomology rings of semisimple algebraic groups} 

\theoremstyle{plain}
\newtheorem{lem}{Lemma}[section]
\newtheorem{cor}[lem]{Corollary}
\newtheorem{prop}[lem]{Proposition}
\newtheorem{alg}[lem]{Algorithm}
\newtheorem{theo}[lem]{Theorem}

\newtheorem{assumption}[lem]{Assumption}

\theoremstyle{definition}
\newtheorem{egg}[lem]{Example}
\newtheorem{rmk}[lem]{Remark}
\newtheorem{defn}[lem]{Definition}
\newtheorem{nota}[lem]{Notation}

\newcommand{\arxiv}[1]{\href{http://arxiv.org/abs/#1}{\tt arXiv:\nolinkurl{#1}}}

\newcommand{\arxivpdf}[1]{\href{http://arxiv.org/pdf/#1}{\tt arXiv:\nolinkurl{#1}}}

\newcommand{\mrm}{\mathrm}
\newcommand{\mcal}{\mathcal}
\newcommand{\D}{\mathcal{D}}

\newcommand{\Hom}{\mathrm{Hom}}

\newcommand{\aaa}{(\alpha)}
\newcommand{\bbb}{(\beta)}
\newcommand{\ab}{(\alpha,\beta)}
\newcommand{\ba}{(\beta,\alpha)}
\newcommand{\aba}{(\alpha,\beta,\alpha)}
\newcommand{\bab}{(\beta,\alpha,\beta)}
\newcommand{\abab}{(\alpha,\beta,\alpha,\beta)}

\newcommand{\Xa}{\Delta_{\alpha}^*}
\newcommand{\Xb}{\Delta_{\beta}^*}
\newcommand{\Xab}{\Delta_{\ab}^*}
\newcommand{\Xba}{\Delta_{\ba}^*}
\newcommand{\Xaba}{\Delta_{\aba}^*}
\newcommand{\Xbab}{\Delta_{\bab}^*}
\newcommand{\Xabab}{\Delta_{\abab}^*}

\newtoggle{comments}
\newtoggle{details}
\newtoggle{detailsnote}

\toggletrue{comments}   
\togglefalse{details}   

\toggletrue{detailsnote}   

\iftoggle{comments}{%
	\newcommand{\comments}[1]{
		\ \\
		{\color{red}
			\textbf{RG:} #1
		}
		\\
	}
}{%
	\newcommand{\comments}[1]{}
}

\iftoggle{details}{%
	\newcommand{\details}[1]{
		\ \\
		{\color{OliveGreen}
			\textbf{Details:} #1
		}
		\\
	}
}{%
	\newcommand{\details}[1]{}
}

\begin{document}
	
	\title[]{A technique for computing oriented cohomology rings\\ of semisimple algebraic groups}
	
	\author[]{Raj Gandhi}
	
	\address{
		Department of Mathematics  \\
		Cornell University
	}
	\email{rg593@cornell.edu}
	
	\address{} 
	\thanks{}
	
	\address{}
	\thanks{}

	\begin{abstract}
		We present a technique for computing a finite set of generators and relations for the ring $\mrm{h}^*(G)$ in terms of formal Demazure operators, where $\mrm{h}^*$ is an oriented cohomology theory satisfying the localization axiom and $G$ is a semisimple algebraic group. Using this technique, we give minimal presentations for the oriented cohomology rings of the adjoint and simply-connected groups of types $A_1$, $A_2$, and $B_2$.
	\end{abstract}
	
	\maketitle
	
	%
	
	\tableofcontents

	\section{Introduction}

	The Chow ring of a linear algebraic group is a very well-studied invariant. Noteworthy examples of computations of these Chow rings include work of Grothendieck \cite{G58}, who showed that the integral Chow rings of the complex algebraic groups $\mrm{SL}_n$ and $\mrm{Sp}_{2n}$ are isomorphic to $\mathbb{Z}$, and work of Kac \cite{K85}, who developed a procedure for calculating the Chow rings of complex reductive algebraic groups with coefficients in any finite field. The functor that sends a smooth variety to its Chow ring is an example of an \textit{(algebraic) oriented cohomology theory} \cite{LM07}. The general study of oriented cohomology rings of linear algebraic groups is a natural generalization of the study of their Chow rings, though there do not appear to be many computations of general oriented cohomology rings of linear algebraic groups in the literature. With that said, partial computations for the \textit{algebraic cobordism rings} of some simple algebraic groups over $\mathbb{C}$ appear in \cite[\S 5]{Y05}.

	In this paper, we describe a complete finite set of generators and relations for the oriented cohomology rings $\mrm{h}^*(G)$ of all semisimple algebraic groups $G$ over algebraically closed fields of characteristic $0$, assuming that the {oriented cohomology theory} $\mrm{h}^*$ satisfies the \textit{localization axiom} (see \cref{defn:localization}.) The presentation in terms of generators and relations is given in \cref{lem:structure-of-cohomology}. In \cref{alg:only}, we describe a technique for computing the relations in the ring $\mrm{h}^*(G)$ explicitly. In \cref{section:Examples-in-rank-2}, we apply this technique to explicitly compute all relations in $\mrm{h}^*(G)$, where $G$ is an adjoint or simply-connected group of type $A_1$, $A_2$, or $B_2$; for these groups, we simplify the relations of \cref{lem:structure-of-cohomology} by hand to obtain minimal presentations for the oriented cohomology rings of these adjoint and simply-connected groups, which are displayed in \cref{tab:title}. We believe that the techniques used in this paper can be adapted to compute minimal presentations for the oriented cohomology rings of other semisimple algebraic groups. We will now give a detailed summary of the techniques used in this paper.

	Let $\mrm{h}^*$ be an oriented cohomology theory in the sense of Levine-Morel \cite{LM07}. This is a contravariant functor from the category of smooth quasi-projective varieties over a field $k$ to the category of commutative, graded rings satisfying certain axioms. Examples of oriented cohomology theories include the Chow theory $\mrm{CH}^*$, Grothendieck's $K^0$ functor\footnote{The notation $K_0$ is standard in the literature. We follow the notation used by Levine--Morel (\cite[Example 1.1.5]{LM07}).}, and, over a base field of characteristic $0$, the universal oriented cohomology theory called algebraic cobordism $\Omega^*$.	Given a vector bundle $E\to X$ of rank $n$ on a smooth variety $X$ over $k$, there is a set of \textit{Chern classes} $c_i(E)\in \mrm{h}^i(X)$, $i\in\{0,\dotsc,n\}$. The Chern classes satisfy Quillen's formula: given two line bundles $\mcal{L}_1$ and $\mcal{L}_2$ on $X$, we have
	\[c_1(\mcal{L}_1\otimes \mcal{L}_2)=F(c_1(\mcal{L}_1),c_1(\mcal{L}_2)),\]
	where $F$ is a one-dimensional commutative formal group law over the ring $R=\mrm{h}^*(\mrm{Spec}(k))$.

	Let $\mrm{h}^*$ be an oriented cohomology theory  with formal group law $F$ over $R=\mrm{h}^*(\mrm{Spec}(k))$ satisfying the \textit{localization axiom} (Assumption \ref{assumption:restrict-theory}), and let $G$ be a semisimple algebraic group over an algebraically closed field $k$ of characteristic $0$ with root datum $\Psi=(\Sigma,\Lambda,\Sigma^\vee,\Lambda^\vee)$. Two key tools used in this paper are the \textit{formal group algebra} $R\llbracket\Lambda\rrbracket_F$ of \cite{CPZ}, which can be viewed as an algebraic substitute for the torus-equivariant oriented cohomology ring of a point (\cite{CZZ3}), and the  subalgebra $\mcal{D}_F$ of the algebra of $R$-linear endomorphisms of $R\llbracket\Lambda\rrbracket_F$ generated by \textit{formal Demazure operators} and by multiplication by elements in $R\llbracket\Lambda\rrbracket_F$. We will always assume that $R\llbracket\Lambda\rrbracket_F$ satisfies a technical assumption, Assumption \ref{assumption:sigma-regular}, which is crucial for the results in Sections \ref{section:Formal-affine-Demazure-algebras} and \ref{section:Algebra-of-Demazure-operators} to hold.
	Fix a Borel subgroup $B$ of $G$, so that $G/B$ is a complete flag variety. In \cite[Theorem 5.1]{GZ12}, Gille-Zainoulline derived a relationship between the oriented cohomology rings $\mathrm{h}^*(G)$ and $\mathrm{h}^*(G/B)$ through the first Chern class map $\mrm{c}_1$. The precise relationship is 
	\begin{equation}\label{GZ}\mrm{h}^*(G)\simeq \mrm{h}^*(G/B)/\left(\mrm{im}\left(\mrm{c}_1|_{\mcal{L}_B}\right)\right),
	\end{equation}
	where $\mrm{c}_1|_{\mcal{L}_B}$ is the restriction of $\mrm{c}_1$ to the set $\mcal{L}_B$ of \textit{$B$-equivariant} line bundles on $G/B$.
	In \cite{CPZ}, Calm\`es-Petrov-Zainoulline constructed an algebraic model for $\mrm{h}^*(G/B)$ using the \textit{augmentation} $\epsilon\D_F$ of the algebra $\D_F$ containing the formal Demazure operators (\cref{section:Structure-of-oriented-cohomology-of-algebraic-groups}). That is, they showed there is an $R$-algebra isomorphism $\mrm{h}^*(G/B)\simeq (\epsilon\D_F)^*$, where $(\epsilon\D_F)^*:=\mrm{Hom}_R(\epsilon\D_F,R)$ is the $R$-dual of $\epsilon \D_F$ (the $R$-algebra structure on $(\epsilon\D_F)^*$ is induced by an $R$-coalgebra structure on $\epsilon \D_F$ (\cite[\S 11]{CZZ1})). The image of $\mrm{c}_1|_{\mcal{L}_B}$ in $\mrm{h}^*(G/B)$ corresponds to the image in the dual $(\epsilon\D_F)^*$ of an \textit{algebraic} Chern class map. Using this algebraic Chern class map, together with (\ref{GZ}), we obtain an algebraic model for $\mrm{h}^*(G)$ in \cref{prop:cohomology-model}. We then describe this algebraic model in terms of a finite set of generators and relations in \cref{lem:structure-of-cohomology}, and provide a technique for computing the relations explicitly in \cref{alg:only}. With enough computing power, a computer can output the list of generators and relations in the ring $\mrm{h}^*(G)$ explicitly, and, from these relations, one can compute a minimal presentation for $\mrm{h}^*(G)$ in terms of generators and relations by hand. In Section \ref{section:Examples-in-rank-2}, we perform this calculation for the adjoint and simply-connected algebraic groups of types $A_1$, $A_2$, and $B_2$.

	This paper is organized as follows. In Section \ref{section:Root-datum}, we recall the notion of a root datum. In Section \ref{section:Formal-affine-Demazure-algebras}, we recall the formal group algebra and the formal affine Demazure algebra. In Section \ref{section:Algebra-of-Demazure-operators}, we recall the algebra and coalgebra structures on $\mcal{D}_F$ and discuss the algebra structure on the dual $\mcal{D}_F^*$. In \cref{section:line-bundles}, we prove a key result about homogeneous line bundles on complete flag varieties. In Section \ref{section:oriented-cohomology-theories}, we recall the notion of an oriented cohomology theory. In Section \ref{section:Structure-of-oriented-cohomology-of-algebraic-groups}, we provide a presentation for the oriented cohomology ring $\mrm{h}^*(G)$ in terms of generators and relations. In Section \ref{section:Examples-in-rank-2}, we compute a minimal presentation for $\mrm{h}^*(G)$ in terms of generators and relations, where $G$ is an adjoint or simply-connected algebraic group of type $A_1$, $A_2$, or $B_2$.

	\textbf{Acknowledgements}: This paper contains results from the author's master's thesis \cite{Gan21}. The author would like to thank Kirill Zainoulline for his guidance throughout this project. The author would also like to thank Burt Totaro for his helpful comments on this article, Ruizhen Liu for his help with the proof of \cref{lem:isomorphic-line-bundles0}, and Gabe Udell for helping verify several calculations in \cref{section:Examples-in-rank-2}. This project was partially supported by an Undergraduate Student Research Award and a Canada Graduate Scholarship - M.Sc. - from the Natural Sciences and Engineering Research Council of Canada. It was also supported by an Ontario Graduate Scholarship and by the University of Ottawa.

	\section{Root datum}\label{section:Root-datum}
	In this section, we recall the notion of a root datum. We closely follow \cite[\S2 and \S 4]{CZZ1}.
	
	\begin{defn}{\textup{(see} \cite[Exp. XXI, \S1.1]{SGA} and \cite[\S2 and \S 4]{CZZ1}.\textup{)}} A \textit{root datum} is a quadruple $\Psi=(\Sigma,\Lambda,\Sigma^\vee,\Lambda^\vee)$, where $\Lambda$ is a lattice (i.e., a finitely generated free abelian group), $\Lambda^\vee$ is the lattice dual to $\Lambda$, and $\Sigma$ is a non-empty finite subset of $\Lambda$ equipped with an embedding $\Sigma\hookrightarrow \Lambda^\vee$, $\alpha\mapsto\alpha^\vee$, with image $\Sigma^\vee$, satisfying both:
		\begin{enumerate}
			\item $\Sigma\cap 2\Sigma=\emptyset$, and $\alpha^\vee(\alpha)=2$ for all $\alpha\in \Sigma$;
			\item For every $\alpha\in \Sigma$ there are $\mathbb{Z}$-linear automorphisms $s_
			\alpha$ of $\Lambda$ and $s_\alpha^\vee$ of $\Lambda^\vee$, given by the following formulas:
			\[s_\alpha(x)=x-\alpha^\vee(x)\alpha,\quad s_\alpha^\vee(y)=y-y(\alpha)\alpha^\vee,\]
			for all $x\in \Lambda$, $y\in\Lambda^\vee$. Moreover, $s_\alpha(\Sigma)=\Sigma$ and $s_\alpha^\vee(\Sigma^\vee)=\Sigma^\vee$ for all $\alpha\in\Sigma$.
		\end{enumerate}
	\end{defn}

	Fix a root datum $\Psi=(\Sigma,\Lambda,\Sigma^\vee,\Lambda^\vee)$. The set $\Sigma$ is a \textit{root system}. Its elements are called \textit{roots}, and the sublattice $\Lambda_r$ of $\Lambda$ generated by $\Sigma$ is called the \textit{root lattice}. There is always a basis $\Delta=\{\alpha_1,\dotsc,\alpha_n\}$ for the root lattice $\Lambda_r$ called a \textit{simple system}, such that each $\alpha\in\Sigma$ is a linear combination of the $\alpha_i$ with all coefficients positive (in which case $\alpha$ is a \textit{positive root}) or all coefficients negative (in which case $\alpha$ is a \textit{negative root}). Thus, $\Sigma=\Sigma_+\sqcup\Sigma_-$ is the disjoint union of positive roots $\Sigma_+$ and negative roots $\Sigma_-$. The $\alpha_i$ are \textit{simple roots}. Set $\Lambda_\mathbb{Q}:=\Lambda\otimes\mathbb{Q}$. The root datum is \textit{semisimple} if $\Lambda_\mathbb{Q}=\Lambda_r\otimes\mathbb{Q}$. Henceforth, we will only consider semisimple root data.

	The \textit{rank} of the semisimple root datum is the rank of $\Lambda_r$. The root datum is \textit{irreducible} if it is not the disjoint union of root data of smaller ranks. The automorphisms $s_\alpha$ are called \textit{reflections}, and the subgroup of $\mathbb{Z}$-linear automorphisms of $\Lambda$ generated by $\{s_\alpha\mid \alpha\in\Sigma\}$ is the \textit{Weyl group} $W$ of the root datum. In fact, $W$ is generated by $\{s_{\alpha_i}\mid \alpha_i\in\Delta\}$.
	
	The sublattice of $\Lambda_\mathbb{Q}=\Lambda\otimes \mathbb{Q}$ generated by elements $\lambda$ satisfying $\alpha^\vee(\lambda)\in\mathbb{Z}$ for all $\alpha\in\Sigma$ is the \textit{weight lattice} $\Lambda_w$. The weight lattice always has a basis consisting of \textit{fundamental weights} $\{\lambda_1,\dotsc,\lambda_n\}$ with respect to $\Delta$: the fundamental weights are defined by the property $\alpha_i^\vee(\lambda_j)=\delta_{i,j}$, where $\delta_{i,j}$ is the Kronecker function. If $\Lambda=\Lambda_w$, then the root datum is called \textit{simply-connected}. If $\Lambda=\Lambda_r$, the root datum is called \textit{adjoint}. We call $\Lambda_w/\Lambda$ the \textit{fundamental group} of the root datum. We denote an irreducible simply-connected (resp. adjoint) root datum by $\mathcal{D}_n^{\text{sc}}$ (resp. $\mathcal{D}_n^{\text{ad}}$), where $\mathcal{D}$ is one of the Dynkin types, $A,B,C,D,E,F,G$, and $n$ is the rank of the root datum. An irreducible root datum is entirely determined by its Dynkin type $\mcal{D}$ and the lattice $\Lambda$. The group $\Lambda_w/\Lambda_r$ has the following structure, depending on the Dynkin type:
	
	\begin{center}
		\begin{tabular}{ |c |c |}
			\hline
			Root system & $\Lambda_w/\Lambda_r$ \\
			\hline\hline
			$A_n$ & $\mathbb{Z}/(n+1)\mathbb{Z}$\\
			\hline
			$B_n, C_n,E_7$ & $\mathbb{Z}/2\mathbb{Z}$ \\
			\hline
			$D_n$ & $\left\{\begin{aligned}
				\mathbb{Z}/2\mathbb{Z}\times \mathbb{Z}/2\mathbb{Z},\quad\text{$n$ even},\\
				\mathbb{Z}/4\mathbb{Z},\quad\quad\quad\quad\textcolor{white}{.,}\text{$n$ odd}.
			\end{aligned}\right\}$ \\
			\hline
			$E_6$ & $\mathbb{Z}/3\mathbb{Z}$ \\
			\hline
			$E_8,F_4,G_2$ & $0$\\
			\hline
		\end{tabular}
	\end{center}

	We can write any root $\alpha\in\Sigma$ as a linear combination $\sum_{t=1}^n d_{\alpha,t} \lambda_t$ for some $d_{\alpha,t}\in\mathbb{Z}$. If $\alpha=\alpha_i$ is a simple root, then we have
	\[C_{j,i}=\alpha_j^\vee(\alpha_i)=\sum_{t=1}^nd_{\alpha_i,t} \alpha_j^\vee(\lambda_t)=d_{\alpha_i,j},\]
	where $C_{j,i}$ is the $(i,j)$-th entry of the \textit{Cartan matrix} of $\Sigma$.
	Thus, $\alpha_i=\sum_{j=1}^nC_{j,i}\lambda_j$. Moreover, one can solve the system of linear equations $\{\alpha_i=\sum_{j=1}^nC_{j,i}\lambda_j\}$ in order to express $\lambda_j$ as a $\mathbb{Q}$-linear combination of the $\alpha_i$. If $\Sigma$ has rank $1$, then $A_1$ is the unique root system. If $\Sigma$ has rank $2$, there are exactly four root systems: $A_1\times A_1$, $A_2$, $B_2$, and $G_2$. Below, we compute the fundamental weights for the rank $2$ root systems in terms of the simple roots. These will be used for the computations in Section \ref{section:Examples-in-rank-2}.
	
	\begin{egg}\label{egg:A1xA1}
		Let $\Sigma$ be the root system of type $A_1\times A_1$. The Cartan matrix of $\Sigma$ is
		\[\begin{bmatrix}
			2& 0\\
			0& 2
		\end{bmatrix}.\]
		Thus, $\alpha_1=2\lambda_1$ and $\alpha_2=2\lambda_2$. We can write 
		\[\lambda_1=\tfrac{1}{2}\alpha_1\quad\text{and}\quad\lambda_2=\tfrac{1}{2}\alpha_2.\] 
		One can directly verify that
		\[\alpha_1^\vee(\lambda_1)=1;\quad \alpha_1^\vee(\lambda_2)=0;\quad \alpha_2^\vee(\lambda_1)=0;\quad \alpha_2^\vee(\lambda_2)=1.\]
		Note also that
		\[s_2(\alpha_1)=\alpha_1;\quad s_1(\alpha_1)=-\alpha_1;\quad s_2s_1(\alpha_1)=-\alpha_1.\]
		\[s_2(\alpha_2)=-\alpha_2;\quad s_1(\alpha_2)=\alpha_2;\quad s_2s_1(\alpha_2)=-\alpha_2.\]
	\end{egg}
	
	\begin{egg}\label{egg:A2}
		The Cartan matrix of $A_2$ is:
		\[\begin{bmatrix}
			2& -1\\
			-1& 2
		\end{bmatrix}.\]
		Thus, $\alpha_1=2\lambda_1-\lambda_2$ and $\alpha_2=-\lambda_1+2\lambda_2$. We can write 
		\[\lambda_1=\tfrac{2}{3}\alpha_1+\tfrac{1}{3}\alpha_2\quad\text{and}\quad\lambda_2=\tfrac{1}{3}\alpha_1+\tfrac{2}{3}\alpha_2. \]
		One can directly verify that
		\[\alpha_1^\vee(\lambda_1)=1;\quad \alpha_1^\vee(\lambda_2)=0;\quad \alpha_2^\vee(\lambda_1)=0;\quad \alpha_2^\vee(\lambda_2)=1.\]
		Note also that
		\[s_1(\alpha_1)=-\alpha_1;\quad s_2(\alpha_1)=\alpha_1+\alpha_2;\quad s_1s_2(\alpha_1)=\alpha_2;\]\[ s_2s_1(\alpha_1)=-\alpha_1-\alpha_2;\quad s_1s_2s_1(\alpha_1)=-\alpha_2.\]
		\[s_1(\alpha_2)=\alpha_1+\alpha_2;\quad s_2(\alpha_2)=-\alpha_2;\quad s_1s_2(\alpha_2)=-\alpha_1-\alpha_2;\]\[ s_2s_1(\alpha_2)=\alpha_1;\quad s_1s_2s_1(\alpha_2)=-\alpha_1.\]
	\end{egg}
	
	\begin{egg}\label{egg:B2}
		The Cartan matrix of $B_2$ is:
		\[\begin{bmatrix}
			2& -2\\
			-1& 2
		\end{bmatrix}.\]
		Thus, $\alpha_1=2\lambda_1-2\lambda_2$ and $\alpha_2=-\lambda_1+2\lambda_2$. We can write 
		\[\lambda_1=\alpha_1+\alpha_2\quad\text{and}\quad\lambda_2=\tfrac{1}{2}\alpha_1+\alpha_2.\] 
		One can directly verify that
		\[\alpha_1^\vee(\lambda_1)=1;\quad \alpha_1^\vee(\lambda_2)=0;\quad \alpha_2^\vee(\lambda_1)=0;\quad \alpha_2^\vee(\lambda_2)=1.\]	
		Note also that 
		\[s_2(\alpha_1)=2\alpha_2+\alpha_1;\quad s_1(\alpha_1)=-\alpha_1;\quad s_2s_1(\alpha_1)=-2\alpha_2-\alpha_1;\quad s_1s_2(\alpha_1)=2\alpha_2+\alpha_1;\]
		\[s_2s_1s_2(\alpha_1)=\alpha_1;\quad s_1s_2s_1(\alpha_1)=-2\alpha_2-\alpha_1;\quad s_2s_1s_2s_1(\alpha_1)=-\alpha_1.\]
		\[s_2(\alpha_2)=-\alpha_2;\quad s_1(\alpha_2)=\alpha_2+\alpha_1;\quad s_2s_1(\alpha_2)=\alpha_2+\alpha_1;\quad s_1s_2(\alpha_2)=-\alpha_2-\alpha_1;\]
		\[s_2s_1s_2(\alpha_2)=-\alpha_2-\alpha_1;\quad s_1s_2s_1(\alpha_2)=\alpha_2;\quad s_2s_1s_2s_1(\alpha_2)=-\alpha_2.\]
	\end{egg}

	\begin{egg}\label{egg:G2}
		The Cartan matrix of $G_2$ is:
		\[\begin{bmatrix}
			2& -1\\
			-3& 2
		\end{bmatrix}.\]
		Thus, $\alpha_1=2\lambda_1-\lambda_2$ and $\alpha_2=-3\lambda_1+2\lambda_2$. We can write 
		\[\lambda_1=2\alpha_1+\alpha_2\quad\text{and}\quad\lambda_2=3\alpha_1+2\alpha_2.\] 
		Since the $\lambda_i$ are expressible as a $\mathbb{Z}$-linear combination of $\alpha_1$ and $\alpha_2$, the weight lattice and root lattice of $G_2$ coincide. We have
		\[s_2(\alpha_1)=\alpha_2+\alpha_1;\quad s_1(\alpha_1)=-\alpha_1;\quad s_2s_1(\alpha_1)=-\alpha_2-\alpha_1;\quad s_1s_2(\alpha_1)=\alpha_2+2\alpha_1;\] \[s_2s_1s_2(\alpha_1)=\alpha_2+2\alpha_1;\quad s_1s_2s_1(\alpha_1)=-\alpha_2-2\alpha_1;\quad s_2s_1s_2s_1(\alpha_1)=-\alpha_2-2\alpha_1;\]
		\[s_1s_2s_1s_2(\alpha_1)=\alpha_2+\alpha_1;\quad s_2s_1s_2s_1s_2(\alpha_1)=\alpha_1;\quad s_1s_2s_1s_2s_1(\alpha_1)=-\alpha_2-\alpha_1;\]\[s_2s_1s_2s_1s_2s_1(\alpha_1)=-\alpha_1.\]
		\[s_2(\alpha_2)=-\alpha_2;\quad s_1(\alpha_2)=\alpha_2+3\alpha_1;\quad s_2s_1(\alpha_2)=2\alpha_2+3\alpha_1;\quad s_1s_2(\alpha_2)=-\alpha_2-3\alpha_1;\]
		\[s_2s_1s_2(\alpha_2)=-2\alpha_2-3\alpha_1;\quad  s_1s_2s_1(\alpha_2)=2\alpha_2+3\alpha_1;\quad s_2s_1s_2s_1(\alpha_2)=\alpha_2+3\alpha_1;\]
		\[s_1s_2s_1s_2(\alpha_2)=-2\alpha_2-3\alpha_1;\quad s_2s_1s_2s_1s_2(\alpha_2)=-\alpha_2-3\alpha_1;\quad s_1s_2s_1s_2s_1(\alpha_2)=\alpha_2;\]
		\[s_2s_1s_2s_1s_2s_1(\alpha_2)=-\alpha_2.\]
	\end{egg}

	\section{Formal affine Demazure algebra}\label{section:Formal-affine-Demazure-algebras}
	
	In this section, we recall the definitions of the formal group algebra and the formal affine Demazure algebra. We closely follow \cite{CPZ}, \cite{CZZ1}, and \cite{HMSZ}. 
	
	\begin{defn}{\textup{(see} \cite[pp. 4]{LM07}.\textup{)}}
		A \textit{one-dimensional commutative formal group law} $F$ over a commutative ring $R$ is a power series $F(u,v)\in R\llbracket u,v\rrbracket$ satisfying the following axioms:
		\begin{enumerate}
			\item\label{itm:one} $F(u,0)=F(0,u)=u\in R\llbracket u\rrbracket$;
			\item\label{itm:two} $F(u,v)=F(v,u)$;
			\item\label{itm:three} $F(u,F(v,w))=F(F(u,v),w)\in R\llbracket u,v,w\rrbracket$.
		\end{enumerate}
		The \textit{formal inverse} of $u$ is the unique power series in $R\llbracket u\rrbracket$, denoted by $-_Fu$, such that $F(u,-_Fu)=0$
	\end{defn}
	
	Let $F=F(u,v)\in R\llbracket u,v\rrbracket$ be a one-dimensional commutative formal group law over a commutative ring $R$. For $m\in\mathbb{Z}_{\geq 0}$, we use the notation
	\[u+_Fv:=F(u,v),\quad m\cdot_F u:=\underbrace{u+_F\dotsb+_Fu}_{\text{$m$-times}},\quad \text{and}\quad (-m)\cdot_F u:=-_F(m\cdot_F u).\]
	Write $F(u,v)=\sum_{i,j\geq 0}a_{i,j}u^iv^j$, where $a_{i,j}\in R$.
	The coefficients $a_{i,j}$ have relations imposed on them by the axioms of the formal group law. For example, axiom (\ref{itm:one}) implies that $a_{0,0}=0$ and that $a_{1,0}=a_{0,1}=1$, and axiom (\ref{itm:two}) implies that $a_{i,j}=a_{j,i}$ for all $i,j\geq 0$. The relations imposed by axiom (\ref{itm:three}) are complicated.
	
	We can also write $-_Fu=-\sum_{i\geq 1}c_iu^i$ for some coefficients $c_i\in R$. The coefficients $c_i$ can be expressed in terms of the coefficients $a_{i,j}$ through the equation $F(u,-_Fu)=0$, as illustrated in the example below.
	\begin{egg}\label{egg:FGL-inverse}
		We will show how $c_1$, $c_2$, $c_3$, $c_4$, $c_5$, and $c_6$ can be expressed in terms of the $a_{i,j}$:
		\begin{align*}
			c_1&=1;\\
			c_2&=-a_{1,1};\\
			c_3&=-a_{1,1}c_2;\\
			c_4&=-a_{1,1}c_3+a_{1,2}c_2-2a_{1,3}+a_{2,2};\\
			c_5&=-a_{1,1}c_4+a_{1,2}c_3+a_{1,2}c_2^2-4a_{1,3}c_2+2a_{2,2}c_2;\\
			c_6&=-a_{1,1}c_5+a_{1,2}c_4+2a_{1,2}c_2c_3-4a_{1,3}c_3-3a_{1,3}c_2^2+3a_{1,4}c_2-2a_{1,5}+2a_{2,2}c_3\\&\textcolor{white}{....}+a_{2,2}c_2^2-a_{2,3}c_2+2a_{2,4}-a_{3,3}.
		\end{align*} 
	\end{egg}

	Fix a root datum $\Psi=(\Sigma,\Lambda,\Sigma^\vee,\Lambda^\vee)$, with Weyl group $W$. Let $R[x_\Lambda]$ be the polynomial ring over $R$ with variables indexed by $\Lambda$. The augmentation map $\epsilon': R[x_\Lambda]\to R$ sends $x_\lambda$ to $0$ for each $\lambda\in\Lambda$ and fixes $R$. Let $R\llbracket x_\Lambda\rrbracket$ be the $\mrm{ker}(\epsilon')$-adic completion of the polynomial ring $R[x_\Lambda]$, and let $\mcal{J}_F$ be the closure of the ideal in $R\llbracket x_\Lambda\rrbracket$ generated by $x_0$ and elements of the form $x_{\lambda_1+\lambda_2}-(x_{\lambda_1}+_Fx_{\lambda_2})$ for all $\lambda_1,\lambda_2\in\Lambda$.
	\begin{defn}(\cite[Def. 2.4]{CPZ})
		The \emph{formal group algebra} is the quotient
		\[R\llbracket\Lambda\rrbracket_F:=R\llbracket x_\Lambda\rrbracket/\mcal{J}_F.\]
	\end{defn}

	The formal group algebra $R\llbracket\Lambda\rrbracket_F$ is a complete Hausdorff ring with respect to the $\mcal{I}_F$-adic topology, where $\mcal{I}_F$ is the kernel of the augmentation map $\epsilon\colon R\llbracket\Lambda\rrbracket_F\to R$ that sends $x_\lambda\mapsto 0$ for each $\lambda\in\Lambda$ and fixes $R$. By \cite[Cor. 2.13]{CPZ}, if $\{\lambda_i\}$ is a $\mathbb{Z}$-basis of $\Lambda$, then there is an $R$-algebra isomorphism $R\llbracket\Lambda\rrbracket_F\simeq R\llbracket x_1,\dotsc,x_n\rrbracket$,
	\[x_{\sum m_i\lambda_i}\mapsto m_1\cdot_F x_1+_F\dotsb+_F m_n\cdot_F x_n.\]
	There is an action of the Weyl group $W$ on $R\llbracket\Lambda\rrbracket_F$, where $w\in W$ acts by
	\[w(x_\lambda):=x_{w(\lambda)},\quad \lambda\in\Lambda.\]
	Observe that, for each element $\lambda\in\Lambda$, we have $x_{-\lambda}=-_F x_\lambda$.

	\begin{egg}(\cite[\S 2]{CPZ})\label{ex:FGL}
		\begin{enumerate}[label=(\alph*)]
			\item\label{ex:additive:FGL} Let $F_A$ be the \textit{additive} formal group law $F_A(x,y)=x+y$ over $R$. There is an $R$-algebra isomorphism
			\[R\llbracket \Lambda\rrbracket_{F_A}\xrightarrow{\simeq} S^*_R(\Lambda):=\prod_{i=0}^\infty S_R^i,\quad x_\lambda\mapsto \lambda\in S_R^1(\Lambda)\text{ for all $\lambda\in\Lambda$},\]
			where $S_R^i$ is the $i$-th symmetric power of $\Lambda$ over $R$.
			\item\label{ex:mult:FGL} Let $F_M$ be the \textit{multiplicative-periodic} formal group law $F_M(x,y)=x+y-\beta xy$ over $R$, where $\beta\in R^\times$ is a unit. Consider the group ring
			\[R[\Lambda] := \left\{\sum_j r_je^{\lambda_j} \text{ }|\text{ } r_j\in R\text{ and } \lambda_j\in\Lambda\right\}.\]
			Let $\mrm{tr}:R[\Lambda]\to R$ be the trace map, which sends $e^{\lambda}\mapsto 1$ for all $\lambda\in\Lambda$ and fixes $R$. We denote the  $\mrm{ker}(\mrm{tr})$-adic completion of $R[\Lambda]$ by $R[\Lambda]^\wedge$. There is an $R$-algebra isomorphism
			\[R\llbracket \Lambda\rrbracket_{F_M}\xrightarrow{\simeq} R[\Lambda]^\wedge,\]
			induced by $x_\lambda\mapsto \beta^{-1}(1-e^{-\lambda})$ for all $\lambda\in\Lambda$, with inverse $e^{\lambda}\mapsto (1-\beta x_{-\lambda})$.
			\item\label{itm:U-FGL} Let $\mathbb{L}$ be the \textit{Lazard ring}, i.e., the commutative ring with generators $a_{i,j}$, $i,j\geq 1$, subject only to the relations imposed by the axioms of the formal group law. The \textit{universal} formal group law is the formal power series
			\[F_U(u,v)=u+v+\sum_{i,j\geq 1}a_{i,j}u^iv^j\in\mathbb{L}\llbracket u,v\rrbracket.\]
			For any one-dimensional commutative formal group law $F$ over a commutative unital ring $R$, there is a unique ring homomorphism $f\colon\mathbb{L}\to R$ such that $F(u,v)=u+v+\sum_{i,j\geq 1}f(a_{i,j})u^iv^j$. 
		\end{enumerate}
	\end{egg}
	
	We say an element $r\in R\llbracket\Lambda\rrbracket_F$ is \textit{regular} if it is not a zero divisor in $R\llbracket\Lambda\rrbracket_F$. 
	\begin{defn}(\cite[Def. 4.4]{CZZ1})
		We say that the formal group algebra $R\llbracket\Lambda\rrbracket_F$ is \emph{$\Sigma$-regular} if $x_\alpha$ is regular in $R\llbracket\Lambda\rrbracket_F$ for all $\alpha\in\Sigma$. 
	\end{defn}
	\begin{rmk}
		By \cite[Remark 4.5]{CZZ1}, the ring $R\llbracket\Lambda\rrbracket_F$ is $\Sigma$-regular if $2$ is regular in $R$, or if the root datum does not have an irreducible component of type $C_n^{\text{sc}}$, $n\geq 1$. Additionally, since all nonzero elements in an integral domain are regular, it follows that $R\llbracket\Lambda\rrbracket_F$ is $\Sigma$-regular if $R\llbracket\Lambda\rrbracket_F$ is an integral domain, which is true if $R$ is an integral domain (\cite[Corollary 2.13]{CPZ}).
	\end{rmk}
	Let $\mathfrak{t}\in\mathbb{Z}$ be the \textit{torsion index} of the root datum $\Psi$, as introduced in \cite{D73}. The prime divisors of $\mathfrak{t}$ are the \textit{torsion primes} of the corresponding simply-connected root datum, together with the prime divisors of $|\Lambda_w/\Lambda|$. We copy the table of torsion primes and prime divisors of $|\Lambda_w/\Lambda_r|$ from \cite[\S 2]{CZZ1}:
	
	\begin{center}
		\begin{tabular}{ |c ||c |c |c |c |c |c |c |c |c |c | }
			\hline
			Root system & $A_l$ & $B_l$, $l\geq 3$ & $C_l$ & $D_l$, $l\geq 3$ & $G_2$ & $F_4$ & $E_6$ & $E_7$ & $E_8$ \\
			\hline
			$|\Lambda_w/\Lambda_r|$ & $l+1$ & $2$ & $2$ & $4$ & $1$ & $1$ & $3$ & $2$ & $1$ \\
			\hline
			Torsion primes & $\emptyset$ & $2$ & $\emptyset$ & $2$ & $2$ & $2,3$ & $2,3$ & $2,3$ & $2,3,5$ \\
			\hline
		\end{tabular}
	\end{center} 
	
	\begin{rmk}
		The torsion index has been computed explicitly for all simply-connected root data (see \cite{D73}, \cite{T05}, and \cite{T05p2}).
	\end{rmk}

	For the remainder of this paper, we work under the following assumption. This assumption is used for \cref{lem:divisible}, \cref{prop:basis-elements}, \cref{theo:presentation}, and \cref{theo:iso}, and \cref{prop:cocommutative-coproduct}.
	\begin{assumption}{\label{assumption:sigma-regular}}
		We assume $R\llbracket\Lambda\rrbracket_F$ is $\Sigma$-regular, and that $\mathfrak{t}$ is regular in $R$.
	\end{assumption}

	The following lemma will allow us to define formal Demazure operators in \cref{defn:formal-Demazure-operator}.
	
	\begin{lem}(\cite[Lemma 4.1]{CZZ1})\label{lem:divisible}
		For each $u\in R\llbracket\Lambda\rrbracket_F$ and root $\alpha$, the element $u-s_\alpha(u)$ is uniquely divisible by $x_\alpha$.
	\end{lem}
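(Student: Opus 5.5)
Uniqueness is the easy half and I would settle it first. By \cref{assumption:sigma-regular}, $x_\alpha$ is regular in $R\llbracket\Lambda\rrbracket_F$. So if $x_\alpha v=x_\alpha v'=u-s_\alpha(u)$, then $x_\alpha(v-v')=0$ forces $v=v'$. The whole task is therefore existence: we must show $u-s_\alpha(u)\in x_\alpha R\llbracket\Lambda\rrbracket_F$.

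\textbf{Existence on generators.} I would first treat $u=x_\lambda$ for $\lambda\in\Lambda$. Here $s_\alpha(\lambda)=\lambda-m\alpha$ with $m=\alpha^\vee(\lambda)\in\mathbb{Z}$. Using the formal group law,
\[x_\lambda-x_{\lambda-m\alpha}=x_\lambda-\bigl(x_\lambda+_F x_{-m\alpha}\bigr).\]
The power series $F(a,b)-a$ lies in $bR\llbracket a,b\rrbracket$, since $F(a,0)=a$. Hence the difference is divisible by $x_{-m\alpha}=(-m)\cdot_F x_\alpha$. This element is itself divisible by $x_\alpha$, because $m\cdot_F x$ and $-_F x$ are power series in $x$ with zero constant term.

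\textbf{Existence in general.} Next I would use the twisted derivation identity
\[uv-s_\alpha(uv)=(u-s_\alpha u)v+s_\alpha(u)(v-s_\alpha v).\]
This shows that the set of $u$ for which $u-s_\alpha(u)\in x_\alpha R\llbracket\Lambda\rrbracket_F$ is an $R$-subalgebra. It contains all $x_\lambda$, hence all polynomials in the $x_\lambda$.

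\textbf{Main obstacle: passing to power series.} The step I expect to be hardest is extending from polynomials to arbitrary power series. Take an $R$-basis $\lambda_1,\dots,\lambda_n$ of $\Lambda$, so that $R\llbracket\Lambda\rrbracket_F\simeq R\llbracket x_1,\dots,x_n\rrbracket$. Then:
\begin{itemize}
\item Write $u=\sum_I r_I x^I$.
\item Then $u-s_\alpha(u)=\sum_I r_I\bigl(x^I-s_\alpha(x^I)\bigr)=\sum_I r_I x_\alpha q_I$.
\item Since $s_\alpha$ preserves $\mathcal{I}_F$, the term $x^I-s_\alpha(x^I)$ lies in $\mathcal{I}_F^{|I|}$.
\item I must choose $q_I$ with $q_I\in\mathcal{I}_F^{|I|-1}$, so that $\sum r_Iq_I$ converges.
\end{itemize}
To get this degree control, I would make the generator case explicit. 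With $x_{s_\alpha\lambda_i}=x_i-x_\alpha\Delta_i$, write
\[x^I-s_\alpha(x^I)=\prod_i x_i^{k_i}-\prod_i(x_i-x_\alpha\Delta_i)^{k_i}.\]
Expanding by the binomial theorem, every term carries at least one factor $x_\alpha$. The remaining cofactor lies in $\mathcal{I}_F^{|I|-1}$, because $x_\alpha\in\mathcal{I}_F$ and $\Delta_i$ is merely some series.

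Setting $v=\sum_I r_Iq_I$ then gives a convergent sum, since $R\llbracket\Lambda\rrbracket_F$ is complete and Hausdorff. Continuity of multiplication gives $x_\alpha v=u-s_\alpha(u)$. Combined with uniqueness from $\Sigma$-regularity, this proves the lemma. Note that regularity of $\mathfrak{t}$ is not needed here.
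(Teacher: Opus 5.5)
Your proof is correct. The paper gives no argument of its own here; the lemma is quoted from \cite[Lemma 4.1]{CZZ1}. So your write-up is a self-contained substitute for that citation rather than a variant of something in the text.

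All three ingredients check out.
\begin{itemize}
\item Uniqueness is exactly the $\Sigma$-regularity of \cref{assumption:sigma-regular}.
\item On generators, $F(a,b)-a\in bR\llbracket a,b\rrbracket$ together with $x_{-m\alpha}=(-m)\cdot_F x_\alpha\in x_\alpha R\llbracket\Lambda\rrbracket_F$ gives $x_\lambda-s_\alpha(x_\lambda)\in x_\alpha R\llbracket\Lambda\rrbracket_F$.
\item Write $x_i-s_\alpha(x_i)=x_\alpha d_i$. Expanding $\prod_i(x_i-x_\alpha d_i)^{k_i}$ produces quotients $q_I\in\mathcal{I}_F^{|I|-1}$, which is exactly what makes $\sum_I r_Iq_I$ converge. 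Continuity of $s_\alpha$ and of multiplication, plus the Hausdorff property, then give $x_\alpha v=u-s_\alpha(u)$.
\end{itemize}
You were right to build the degree bound into $q_I$ by construction. Extracting it from $x^I-s_\alpha(x^I)\in\mathcal{I}_F^{|I|}$ by cancelling $x_\alpha$ would be harder, because how much $\mathcal{I}_F$-adic order that cancellation can cost is not evident from regularity alone. You are also right that existence uses neither $\Sigma$-regularity nor regularity of $\mathfrak{t}$.

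Three cosmetic points:
\begin{itemize}
\item $\lambda_1,\dots,\lambda_n$ should be a $\mathbb{Z}$-basis of $\Lambda$, not an $R$-basis.
\item Your symbol $\Delta_i$ for the quotient $(x_i-s_\alpha(x_i))/x_\alpha$ clashes with the paper's $\Delta_i=\Delta_{\alpha_i}$; that is why I renamed it $d_i$ above.
\item The subalgebra step is harmless but redundant, since the final step redoes monomials with the bounds you need.
\end{itemize}

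If you want a shorter existence argument, choose a basis adapted to $\alpha^\vee$. Since $\alpha^\vee(\Lambda)$ is a nonzero subgroup of $\mathbb{Z}$, hence free of rank one, we have $\Lambda=\mathbb{Z}\lambda_1\oplus\ker\alpha^\vee$. So you may take $\lambda_2,\dots,\lambda_n\in\ker\alpha^\vee$, and then $s_\alpha$ fixes $x_2,\dots,x_n$. Writing $u=f(x_1,\dots,x_n)$, you get $u-s_\alpha(u)=f(x_1,x_2,\dots,x_n)-f(s_\alpha(x_1),x_2,\dots,x_n)$. This is divisible by $x_1-s_\alpha(x_1)$: substitute into the identity $f(a,\mathbf{y})-f(b,\mathbf{y})\in(a-b)R\llbracket a,b,\mathbf{y}\rrbracket$. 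Your generator step then supplies the factor $x_\alpha$. This replaces the binomial expansion and the convergence estimate with a single substitution. Your version has the advantage of working with an arbitrary basis and making the use of completeness explicit.
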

	
	\begin{defn}\label{defn:formal-Demazure-operator}(\cite[Def. 4.2]{CZZ1})
		For each root $\alpha\in\Sigma$, define the \textit{formal Demazure operator}:
		\[\Delta_\alpha: R\llbracket\Lambda\rrbracket_F\to R\llbracket\Lambda\rrbracket_F,\quad\quad \Delta_\alpha(u):=\frac{u-s_\alpha(u)}{x_\alpha},\quad u\in R\llbracket\Lambda\rrbracket_F.\]
	\end{defn}
	\begin{rmk}
		The formal Demazure operator satisfies a \textit{twisted Leibnitz rule}. For all $u,v\in R\llbracket\Lambda\rrbracket_F$, we have
		\[\Delta_\alpha(uv)=\Delta_\alpha(u)v+s_\alpha(u)\Delta_\alpha(v),\quad \alpha\in\Sigma.\]
	\end{rmk}

	\begin{defn}(\cite[\S 5]{CZZ1})
		Let $\mcal{Q}^{F}$ be the localization of $R\llbracket\Lambda\rrbracket_F$ at the multiplicative set generated by the regular elements $\{x_\alpha\mid \alpha\in\Sigma\}$. The action of $W$ on $R\llbracket\Lambda\rrbracket_F$ induces an action by automorphisms on $\mcal{Q}^F$. The \emph{twisted formal group algebra} $\mcal{Q}_W^F$ is the tensor product $\mcal{Q}^F\otimes_R R[W]$ as an $R$-module, with multiplication 
		\[(q\delta_{w})\cdot (q'\delta_{w'}):=qw(q')\delta_{ww'},\quad q,q'\in\mcal{Q}^F,w,w'\in W,\]
		where, $\delta_w$ is the element in the group algebra $R[W]$ corresponding to $w\in W$.
	\end{defn}
	
	\begin{defn}(\cite[Def. 5.3]{CZZ1})
		For each root $\alpha\in\Sigma$, we define the \emph{formal Demazure element},
		\[X_\alpha:=\tfrac{1}{x_\alpha}(1-\delta_{s_\alpha})\in\mcal{Q}_W^F.\]	
	\end{defn}
	
	\begin{defn}(\cite[Def. 5.7]{CZZ1})
		The \emph{formal affine Demazure algebra} $\mathbf{D}_F$ is the $R$-subalgebra of $\mcal{Q}_W^F$ generated by $R\llbracket\Lambda\rrbracket_F$ and by the formal Demazure elements $X_\alpha$, $\alpha\in\Sigma$.
	\end{defn}
	
	For $i>0$, define $[i]:=\{1,\dotsc,i\}$. Let $\Delta=\{\alpha_1,\dotsc,\alpha_n\}$ be a simple system for $\Sigma$, with corresponding simple reflections $\{s_1,\dotsc,s_n\}$. Suppose $m_{i,j}$ is the order of $s_is_j$ in $W$. 
	For $i\in[n]$, we set $\Delta_i:=\Delta_{\alpha_i}$ and $X_i:=X_{\alpha_i}$. If $I=(i_1,\dotsc,i_t)$ is a sequence in $[n]$, we define its \textit{length} $l(I):=t$, and we set
	\[s_I:=s_{i_1}\dotsb s_{i_t}\quad \text{and}\quad \Delta_I:=\Delta_{i_1}\circ\dotsb\Delta_{i_t}\quad \text{and} \quad X_I:=X_{i_1}\dotsb X_{i_t}.\]
	We call $I$ a \textit{sequence of simple roots}.
	We say that $I$ is \textit{reduced} if $w(I):=s_{i_1}\dotsb s_{i_t}$ is reduced in $W$. For any reduced decomposition $w=s_{i_1}\dotsb s_{i_t}$ of $w\in W$, we call $I_w=(i_1,\dotsc,i_t)$ a \textit{reduced sequence} for $w$. 
	
	For each $w\in W$, fix a reduced sequence $I_w$. Set $w_0^{i,j}:=\underbrace{s_is_js_i\dotsb}_\text{$m_{i,j}$-times}$.

	\begin{prop}(\cite[Prop. 7.7]{CZZ1})\label{prop:basis-elements}
		The $R$-algebra $\mathbf{D}_F$ is free as a left $R\llbracket\Lambda\rrbracket_F$-submodule, with basis $\{X_{I_w}\}_{w\in W}$.
	\end{prop}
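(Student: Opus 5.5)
We argue inside the twisted formal group algebra $\mcal{Q}_W^F$, where $\{\delta_w\}_{w\in W}$ is a left $\mcal{Q}^F$-basis by construction. The proof has three parts: a triangularity computation, a linear independence argument, and a spanning argument.

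\emph{Triangularity.} For a reduced sequence $I_w$ we expand the product
\[X_{I_w}=\prod_{k}\tfrac{1}{x_{\alpha_{i_k}}}(1-\delta_{s_{i_k}})\]
by moving every $\delta$ to the right with the rule $\delta_v q=v(q)\delta_v$. This gives
\[X_{I_w}=\sum_{v\le w}a_{w,v}\,\delta_v,\qquad a_{w,w}=(-1)^{l(w)}\prod_{\beta\in\Sigma_+\cap w\Sigma_-}x_\beta^{-1},\]
with $\le$ the Bruhat order. Since $a_{w,w}$ is a unit in $\mcal{Q}^F$, the transition matrix from $\{X_{I_w}\}$ to $\{\delta_w\}$ is triangular with invertible diagonal. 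Hence $\{X_{I_w}\}$ is a $\mcal{Q}^F$-basis of $\mcal{Q}_W^F$.

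\emph{Independence.} $R\llbracket\Lambda\rrbracket_F$ injects into $\mcal{Q}^F$ because the $x_\alpha$ are regular (\cref{assumption:sigma-regular}). Linear independence of $\{X_{I_w}\}$ over $R\llbracket\Lambda\rrbracket_F$ therefore follows from the previous step.

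\emph{Spanning.} This is the main obstacle. We must show that the left $R\llbracket\Lambda\rrbracket_F$-span $M$ of $\{X_{I_w}\}$ is closed under multiplication, i.e. that it equals $\mathbf{D}_F$. Left multiplication by $X_i$ and by $R\llbracket\Lambda\rrbracket_F$ generates $\mathbf{D}_F$, so it suffices to prove two things.

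First, $X_{I_w}\,u\in M$ for $u\in R\llbracket\Lambda\rrbracket_F$. Iterating the twisted Leibniz commutation
\[X_\alpha u=s_\alpha(u)X_\alpha+\Delta_\alpha(u),\]
with $\Delta_\alpha(u)\in R\llbracket\Lambda\rrbracket_F$ by \cref{lem:divisible}, rewrites $X_{I_w}u$ as an $R\llbracket\Lambda\rrbracket_F$-combination of words $X_J$ with $J$ a subsequence of $I_w$. So it is enough to show that every word $X_J$, reduced or not, lies in $M$.

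Second, $X_iX_{I_w}\in M$. If $l(s_iw)>l(w)$, then $(i,I_w)$ is a reduced sequence for $s_iw$, so we only need to compare two reduced expressions of the same element. If $l(s_iw)<l(w)$, choose $I_w$ to begin with $i$ after changing expressions, and use the quadratic relation
\[X_i^2=\kappa_{\alpha_i}X_i,\qquad \kappa_\alpha=\tfrac1{x_\alpha}+\tfrac1{x_{-\alpha}}\in R\llbracket\Lambda\rrbracket_F.\]

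Both situations reduce to the same key input: for two reduced sequences $I,I'$ of $w$,
\[X_I-X_{I'}\in\sum_{v<w}R\llbracket\Lambda\rrbracket_F\,X_{I_v}.\]
By Matsumoto's theorem it suffices to check this for the braid relations. Concretely, one needs that $X_iX_jX_i\cdots-X_jX_iX_j\cdots$ ($m_{i,j}$ factors each) lies in the $R\llbracket\Lambda\rrbracket_F$-span of shorter words. This is a rank-two computation. It is where the hypothesis that the torsion index $\mathfrak t$ is regular enters: without it the coefficients lie only in $\mcal{Q}^F$, or in $R\llbracket\Lambda\rrbracket_F[1/\mathfrak t]$. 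I would first establish the rank-two statement, case by case for $A_1\times A_1$, $A_2$, $B_2$, $G_2$. Then I would run induction on length, using the triangularity above to identify coefficients uniquely, which completes the spanning argument.
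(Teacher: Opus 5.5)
Your triangularity computation is right: the leading coefficient of $X_{I_w}$ is $(-1)^{l(w)}\prod_{\beta\in\Sigma_+\cap w\Sigma_-}x_\beta^{-1}$, so $\{X_{I_w}\}_{w\in W}$ is a $\mcal{Q}^F$-basis of $\mcal{Q}_W^F$. Independence over $R\llbracket\Lambda\rrbracket_F$ then follows from $\Sigma$-regularity. The reduction of spanning to rank-two braid relations, via the quadratic relation, Matsumoto's theorem and induction on length, is also sound once you add a step you skipped. $\mathbf{D}_F$ is defined using $X_\alpha$ for \emph{all} $\alpha\in\Sigma$. You need $\delta_{s_i}=1-x_{\alpha_i}X_i$ and $\delta_wX_\alpha\delta_{w^{-1}}=X_{w(\alpha)}$ to see that $R\llbracket\Lambda\rrbracket_F$ and the simple $X_i$ already generate it.

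The gap is the step you call the main obstacle, and it carries essentially all of the content. You must show that $X_iX_jX_i\cdots-X_jX_iX_j\cdots$ is an $R\llbracket\Lambda\rrbracket_F$-combination, not merely a $\mcal{Q}^F$-combination, of shorter $X_{I_v}$. You only announce a case-by-case check, and your account of what makes it work is wrong.

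The coefficients are rational expressions in the $x_\gamma$ for $\gamma$ in the rank-two subsystem spanned by $\alpha_i,\alpha_j$. They do not see the torsion index of the whole root datum. For example, $5$ divides $\mathfrak{t}$ for $E_8$, yet the only rank-two subsystems there are $A_1\times A_1$ and $A_2$. In $A_1\times A_1$ you have $X_iX_j=X_jX_i$ on the nose. In $A_2$, expanding in the $\delta_w$ gives
\[X_iX_jX_i-X_jX_iX_j=\Bigl(\tfrac{1}{x_{\alpha_i}x_{\alpha_j}}+\tfrac{1}{x_{-\alpha_i}x_{\alpha_i+\alpha_j}}-\tfrac{1}{x_{\alpha_j}x_{\alpha_i+\alpha_j}}\Bigr)X_i-\Bigl(\tfrac{1}{x_{\alpha_i}x_{\alpha_j}}+\tfrac{1}{x_{-\alpha_j}x_{\alpha_i+\alpha_j}}-\tfrac{1}{x_{\alpha_i}x_{\alpha_i+\alpha_j}}\Bigr)X_j .\]
Both coefficients lie in $R\llbracket x_{\alpha_i},x_{\alpha_j}\rrbracket$ for every $R$ and $F$. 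After clearing the denominator $x_{\alpha_i}x_{\alpha_j}x_{\alpha_i+\alpha_j}$ (recall $x_{-\alpha}/x_\alpha$ is a unit), the numerator vanishes modulo each factor. So your claim that without regularity of $\mathfrak{t}$ the coefficients lie only in $\mcal{Q}^F$ or in $R\llbracket\Lambda\rrbracket_F[1/\mathfrak{t}]$ is false.

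What you actually have to supply is a proof of integrality for $B_2$ and $G_2$. There the coefficients are long sums of fractions, and divisibility by each $x_\gamma$ separately does not automatically give divisibility by their product. For instance, with $\alpha$ short and $\beta$ long, modulo $x_\beta$ the element $x_{2\alpha+\beta}$ becomes $x_{2\alpha}$, which can be a zero divisor. One clean option is to verify these identities over the Lazard ring and specialize along $\mathbb{L}\to R$.

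You cannot close the gap by quoting relation (3) of \cref{theo:presentation}. In the cited source that relation, with $\eta_w^{i,j}\in R\llbracket\Lambda\rrbracket_F$, is deduced from this proposition. If you do fill the gap, your argument never uses regularity of $\mathfrak{t}$, only $\Sigma$-regularity. It would then prove the proposition under weaker hypotheses than those assumed here.
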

	
	The following remark is taken from \cite[Def. 4.2]{HMSZ}.
	\begin{rmk}
		For $\alpha\in\Sigma$, set $\kappa_\alpha:=\tfrac{1}{x_\alpha}+\tfrac{1}{x_{-\alpha}}\in\mcal{Q}^F$. In fact, we have that $\kappa_\alpha\in R\llbracket\Lambda\rrbracket_F$: \[0=x_{\alpha}+_F x_{-\alpha}=x_{\alpha}+x_{-\alpha}+\sum_{i,j\geq 1}a_{i,j}x_\alpha^ix_{-\alpha}^j\implies \kappa_\alpha=-\sum_{i,j\geq 1}a_{i,j}x_\alpha^{i-1}x_{-\alpha}^{j-1}\in R\llbracket\Lambda\rrbracket_F.\]
	\end{rmk}
	
	\begin{theo}(\cite[Theorem 7.9]{CZZ1})\label{theo:presentation}
		The elements $q\in R\llbracket\Lambda\rrbracket_F$ and the formal Demazure elements $X_i=X_{\alpha_i}$, where $\alpha_i\in\Delta$, satisfy the following relations:
		\begin{enumerate}
			\item\label{itm:theo1} $X_iq=\Delta_i(q)+s_i(q)X_i$;
			\item\label{itm:theo2} $X_i^2=\kappa_iX_i$, where $\kappa_i:=\tfrac{1}{x_{\alpha_i}}+\tfrac{1}{x_{-\alpha_i}}\in R\llbracket\Lambda\rrbracket_F$;
			\item\label{itm:theo3} $\underbrace{X_iX_jX_i\dotsb}_{\text{$m_{i,j}$-times}}-\underbrace{X_jX_iX_j\dotsb}_{\text{$m_{i,j}$-times}}=\sum\limits_{w<w_0^{i,j}}\eta_w^{i,j}X_{I_w}$, $\eta_w^{i,j}\in R\llbracket\Lambda\rrbracket_F$.
		\end{enumerate}
		Note that the ordering $<$ is with respect to the Bruhat order on $W$. These relations, together with the ring law in $R\llbracket\Lambda\rrbracket_F$ and the fact that the $X_i$ are $R$-linear form a complete set of relations in $\mathbf{D}_F$.
	\end{theo}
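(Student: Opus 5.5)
We would prove the three relations by direct computation in the twisted formal group algebra $\mcal{Q}_W^F$. We would then prove completeness by comparing a spanning set of the abstractly presented algebra with the basis of \cref{prop:basis-elements}.

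For (\ref{itm:theo1}), expand $X_iq=\tfrac{1}{x_{\alpha_i}}(1-\delta_{s_i})q=\tfrac{1}{x_{\alpha_i}}(q-s_i(q)\delta_{s_i})$. Subtracting $s_i(q)X_i=\tfrac{s_i(q)}{x_{\alpha_i}}(1-\delta_{s_i})$ leaves $\tfrac{q-s_i(q)}{x_{\alpha_i}}=\Delta_i(q)$. Here \cref{lem:divisible} guarantees this lies in $R\llbracket\Lambda\rrbracket_F$. For (\ref{itm:theo2}), compute
\[X_i^2=\tfrac{1}{x_{\alpha_i}}(1-\delta_{s_i})\tfrac{1}{x_{\alpha_i}}(1-\delta_{s_i})=\tfrac{1}{x_{\alpha_i}}\Bigl(\tfrac{1}{x_{\alpha_i}}-\tfrac{1}{x_{-\alpha_i}}\delta_{s_i}\Bigr)(1-\delta_{s_i}),\]
using $s_i(x_{\alpha_i})=x_{-\alpha_i}$ and $\delta_{s_i}^2=1$. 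Expanding gives $\tfrac{1}{x_{\alpha_i}}\bigl(\tfrac{1}{x_{\alpha_i}}+\tfrac{1}{x_{-\alpha_i}}\bigr)-\tfrac{1}{x_{\alpha_i}}\bigl(\tfrac{1}{x_{\alpha_i}}+\tfrac{1}{x_{-\alpha_i}}\bigr)\delta_{s_i}=\kappa_iX_i$.

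For (\ref{itm:theo3}), both braid products lie in $\mathbf{D}_F$. By \cref{prop:basis-elements} their difference is $\sum_{w\in W}\eta_wX_{I_w}$ with $\eta_w\in R\llbracket\Lambda\rrbracket_F$, so it remains to show that $\eta_w=0$ unless $w<w_0^{i,j}$. Expand any product $X_{j_1}\dotsb X_{j_m}$ in $\mcal{Q}_W^F$ as $\sum_v c_v\delta_v$. Only $v\le s_{j_1}\dotsb s_{j_m}$ in Bruhat order occur, and the top coefficient is $\prod_k \tfrac{1}{s_{j_1}\dotsb s_{j_{k-1}}(x_{\alpha_{j_k}})}$ up to sign, i.e.\ $\pm\prod_{\beta\in\Sigma_+\cap w^{-1}\ldots}x_\beta^{-1}$. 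This is the product of $x_{-\beta}$-inverses over the inversion set of the element, which depends only on $w_0^{i,j}$ and not on the reduced word. Hence the $\delta_{w_0^{i,j}}$ coefficients cancel in the difference. Both products live in the parabolic subalgebra for $\{s_i,s_j\}$, so only $\delta_v$ with $v\le w_0^{i,j}$ appear. Triangularity of the change of basis between $\{X_{I_w}\}$ and $\{\delta_w\}$ (with invertible diagonal in $\mcal{Q}^F$) then gives the support condition.

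For completeness, let $\tilde{\mathbf{D}}$ be the $R$-algebra generated by $R\llbracket\Lambda\rrbracket_F$ and symbols $X_i$ subject to (\ref{itm:theo1})--(\ref{itm:theo3}). It surjects onto $\mathbf{D}_F$. Each $X_\alpha$ for non-simple $\alpha$ is expressible via simple ones, since $\alpha=w(\alpha_i)$ gives $X_\alpha$ as a conjugate by $\delta_w$, which one must show lies in the subalgebra generated by simple $X_i$; this is the one place to check carefully. Using (\ref{itm:theo1}) we move all coefficients to the left, so $\tilde{\mathbf{D}}$ is spanned by $q X_J$ for words $J$. By (\ref{itm:theo2}) any non-reduced word reduces, and by (\ref{itm:theo3}) braid moves hold modulo shorter terms. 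Matsumoto's theorem together with induction on length and Bruhat order then shows that $\{X_{I_w}\}$ spans $\tilde{\mathbf{D}}$ as a left module. Since their images are free by \cref{prop:basis-elements}, the surjection is an isomorphism.

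The main obstacle is the leading-term/triangularity argument for (\ref{itm:theo3}) and the reduction of non-reduced words in the spanning step. These use the $\Sigma$-regularity and torsion-index hypotheses of \cref{assumption:sigma-regular}, which underlie \cref{prop:basis-elements}.
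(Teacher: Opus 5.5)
Your proposal is correct. The paper gives no argument of its own for this statement: it imports the result from \cite[Theorem 7.9]{CZZ1}, and in \cref{rmk:compute} it points to \cite[Thm.~6.8]{HMSZ} for the explicit braid coefficients.

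Your reconstruction is the standard proof of the cited result:
\begin{enumerate}
\item relations (1) and (2) by direct computation in $\mcal{Q}_W^F$;
\item the support condition in (3) from \cref{prop:basis-elements}, cancellation of the $\delta_{w_0^{i,j}}$-coefficients, and Bruhat-triangularity;
\item completeness by straightening, Matsumoto's theorem and freeness.
\end{enumerate}
The one real difference in outcome is that your argument for (3) is purely existential. It shows that the $\eta_w^{i,j}$ lie in $R\llbracket\Lambda\rrbracket_F$ with support below $w_0^{i,j}$, but it does not produce them. \cref{alg:only} needs them explicitly, and that is exactly what the paper's appeal to \cite{HMSZ} supplies.

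A few steps should be tightened.
\begin{itemize}
\item The point you flag as needing care is immediate: $\delta_{s_i}=1-x_{\alpha_i}X_i$ and $X_{w(\alpha_i)}=\delta_wX_i\delta_w^{-1}$. Alternatively, surjectivity of $\tilde{\mathbf{D}}\to\mathbf{D}_F$ already follows from \cref{prop:basis-elements}, since each $X_{I_w}$ is a word in the simple $X_i$.
\item Your formula for the leading coefficient of $X_{I_w}$ is garbled. It should read $(-1)^{l(w)}\prod_{\beta\in\Sigma_+\cap w(\Sigma_-)}x_\beta^{-1}$.
\item In the triangularity step, take $w$ Bruhat-maximal with $\eta_w\neq 0$. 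Then use that $R\llbracket\Lambda\rrbracket_F\to\mcal{Q}^F$ is injective, which holds by $\Sigma$-regularity.
\item For a non-reduced word, take its shortest non-reduced prefix. Use Matsumoto's theorem to make that prefix end in a repeated letter before applying (2).
\end{itemize}
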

	
	\begin{rmk}\label{rmk:compute}
		The $\eta_w^{i,j}$ of Theorem \ref{theo:presentation} can be computed explicitly from the coefficients computed in \cite[Thm. 6.8]{HMSZ} using the formulas (\ref{itm:theo1}) and (\ref{itm:theo2}) of Theorem \ref{theo:presentation}.
	\end{rmk}
	
	\section{Formal Demazure operators}\label{section:Algebra-of-Demazure-operators}
	
	We assume $R\llbracket\Lambda\rrbracket_F$ is a formal group algebra satisfying Assumption \ref{assumption:sigma-regular} throughout this section, which is needed for \cref{theo:iso} and \cref{prop:cocommutative-coproduct}. In this section, we discuss the subalgebra $\mcal{D}_F$ of the endomorphism algebra of $R\llbracket\Lambda\rrbracket_F$ generated by formal Demazure operators and by multiplication by elements in $R\llbracket\Lambda\rrbracket_F$. The dual $\mcal{D}_F^*$ of $\mcal{D}_F$ will be used in \cref{section:Structure-of-oriented-cohomology-of-algebraic-groups} to study the oriented cohomology rings of the semisimple algebraic groups. We will closely follow \cite{CZZ1} in this section.

	\begin{defn}(\cite[\S 7]{CZZ1})\label{defn:endomorphisms}
		Let $\mcal{D}_F$ be the subalgebra of the algebra of $R$-linear endomorphisms of $R\llbracket\Lambda\rrbracket_F$ generated by the operators $\Delta_\alpha$, $\alpha\in\Sigma$, and by multiplication by elements in $R\llbracket\Lambda\rrbracket_F$.
	\end{defn}
	
	The proof of the following theorem uses  \cref{assumption:sigma-regular}.
	
	\begin{theo}(\cite[Theorem 7.10]{CZZ1})\label{theo:iso}
		The $R\llbracket\Lambda\rrbracket_F$-linear map $\phi:\mathbf{D}_F\to\mcal{D}_F$ sending $X_i\mapsto \Delta_i$ is an isomorphism of $R$-algebras.
	\end{theo}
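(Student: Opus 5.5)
We plan to construct $\phi$ from the natural action of the twisted formal group algebra on $\mcal{Q}^F$, and then to establish surjectivity and injectivity in turn. First, $\mcal{Q}_W^F$ acts $R$-linearly on $\mcal{Q}^F$ by
\[(q\delta_w)\cdot u:=q\,w(u),\qquad q,u\in\mcal{Q}^F,\ w\in W.\]
This is an algebra action, since
\[(q\delta_w)(q'\delta_{w'})\cdot u=q\,w(q')\,ww'(u)=(q\delta_w)\cdot\big((q'\delta_{w'})\cdot u\big).\]
Under this action, $X_\alpha\cdot u=\frac{u-s_\alpha(u)}{x_\alpha}$. For $u\in R\llbracket\Lambda\rrbracket_F$ this equals $\Delta_\alpha(u)$ by \cref{lem:divisible}, and in particular it lies in $R\llbracket\Lambda\rrbracket_F$. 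Hence every generator of $\mathbf{D}_F$ preserves the subring $R\llbracket\Lambda\rrbracket_F\subset\mcal{Q}^F$. Here we use $\Sigma$-regularity, which makes the localization map $R\llbracket\Lambda\rrbracket_F\to\mcal{Q}^F$ injective. Restricting the action therefore gives an $R$-algebra homomorphism $\phi\colon\mathbf{D}_F\to\mrm{End}_R(R\llbracket\Lambda\rrbracket_F)$. It sends $q\mapsto$ (multiplication by $q$) and $X_i\mapsto\Delta_i$, and it is left $R\llbracket\Lambda\rrbracket_F$-linear.

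Surjectivity onto $\mcal{D}_F$ is then immediate. The image of $\phi$ is the subalgebra generated by multiplications and the operators $\Delta_\alpha$ for all roots $\alpha$, so it suffices to check that $X_\alpha\in\mathbf{D}_F$ for non-simple $\alpha$. This holds by definition, since $\mathbf{D}_F$ is generated by all $X_\alpha$, $\alpha\in\Sigma$. Consequently the image is exactly $\mcal{D}_F$.

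The main step is injectivity. By \cref{prop:basis-elements}, an element of $\mathbf{D}_F$ has a unique expression $z=\sum_{w}q_wX_{I_w}$ with $q_w\in R\llbracket\Lambda\rrbracket_F$. We expand each $X_{I_w}$ in $\mcal{Q}_W^F$ as
\[X_{I_w}=\sum_{v\le w}a_{w,v}\delta_v,\]
where the coefficient $a_{w,w}=\pm\prod x_\beta^{-1}$ runs over the inversion roots of $w$; this is a triangular expansion with invertible diagonal in $\mcal{Q}^F$. Thus $z=\sum_v b_v\delta_v$ with $b_v\in\mcal{Q}^F$, and $z=0$ if and only if all $b_v=0$. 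Suppose $\phi(z)=0$, so that $\sum_v b_v\,v(u)=0$ for all $u\in R\llbracket\Lambda\rrbracket_F$. We must deduce that every $b_v$ vanishes. This is a Dedekind--Artin style independence of the characters $v\colon R\llbracket\Lambda\rrbracket_F\to\mcal{Q}^F$.

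We expect this independence to be the obstacle. Over a field one chooses $u$ separating the $W$-orbits, but here $R$ may have torsion, and this is exactly where the regularity of $\mathfrak{t}$ enters. We would try the following. Clear denominators so that all $b_v$ lie in $R\llbracket\Lambda\rrbracket_F$. Then apply $\phi(z)$ to products $u\cdot u'$ and use the twisted Leibniz rule, which gives $\sum_v b_v\,(v(u')-w(u'))\,v(u)=0$. Inducting on the number of nonzero $b_v$ reduces the problem to showing that, for $v\neq w$, suitable differences $v(x_\lambda)-w(x_\lambda)$ are regular after localization. That holds up to factors $x_\alpha$ when $\Lambda$ separates the Weyl group, but for nonadjoint data we may only obtain such elements times torsion-index factors.

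Alternatively, and perhaps more cleanly, we would use the result of \cite{CZZ1} that when $\mathfrak{t}$ is regular, the map $\mathbf{D}_F\to\mrm{Hom}$ evaluated against a basis $\{\Delta_{I_v}(u_j)\}$ is nondegenerate. Concretely, we would choose elements $u_v$ with $\Delta_{I_w}(u_v)$ upper triangular with diagonal $\mathfrak{t}$-divisors; Demazure's construction provides this, with the determinant a power of $\mathfrak{t}$. Applying $\phi(z)$ to these $u_v$ and using triangularity in the Bruhat order, we would read off $q_w\cdot(\text{regular})=0$ inductively from the top degree down. This forces every $q_w=0$, so $z=0$ and $\phi$ is injective, which gives the isomorphism.
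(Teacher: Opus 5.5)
Your construction of $\phi$ from the action of $\mcal{Q}_W^F$ on $\mcal{Q}^F$ and your surjectivity argument are correct. The paper itself proves nothing here; it only quotes \cite[Theorem 7.10]{CZZ1}. The gap is injectivity, which is the whole content of the theorem, and neither of your routes closes it.

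In route (a) the Dedekind reduction is sound. It leaves you needing, for every $1\neq u\in W$, some $\lambda$ with $x_{\lambda-u(\lambda)}$ regular in $S:=R\llbracket\Lambda\rrbracket_F$, and this is exactly where $\mathfrak t$ must enter. Consider $\mrm{SO}(5,k)$, where all roots are primitive in $\Lambda=\mathbb{Z}^2$, with $R=\mathbb{F}_2$ and $F$ additive. Then $S$ is $\Sigma$-regular, yet $w_0=-1$ acts trivially on $S$ because $x_{-\lambda}=-x_\lambda=x_\lambda$. Hence $\delta_{w_0}-1$, which lies in $\mathbf{D}_F$ since $\delta_{s_i}=1-x_{\alpha_i}X_i$, is a nonzero element of $\ker\phi$; here $2\mid\mathfrak t$. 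Saying one obtains such elements ``up to torsion-index factors'' gives no mechanism by which the regularity of $\mathfrak t$ would produce the needed $\lambda$.

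Route (b) uses the right input but rests on a false step. Take $u_v:=\Delta_{I_v}(u_0)$, where $u_0\in\mcal{I}_F^N$ with $N=|\Sigma_+|$ lifts some $\bar u_0\in S^N_R(\Lambda)$ whose classical $\Delta_{w_0}(\bar u_0)$ equals $\mathfrak t$. For non-additive $F$ nothing forces $\Delta_{I_w}(u_v)=0$ when $\ell(w)+\ell(v)>N$, because $\Delta_\alpha^2=\kappa_\alpha\Delta_\alpha$ with $\epsilon(\kappa_\alpha)=-a_{11}$. Concretely:

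\begin{itemize}
\item In rank one, $\Delta_\alpha(u_{s_\alpha})=\kappa_\alpha\Delta_\alpha(u_0)$ has constant term $-a_{11}\mathfrak t$.
\item In general $u_{w_0}$ is no longer a constant, so the column $v=w_0$ does not isolate $q_e$, and your top-down induction cannot start.
\end{itemize}

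Also, ``Demazure's construction'' lives in $S^*_R(\Lambda)$. It must be transported through $\mathrm{gr}_{\mcal{I}_F}S\simeq S^*_R(\Lambda)$, on which $\Delta_\alpha$ (mapping $\mcal{I}_F^k$ into $\mcal{I}_F^{k-1}$) induces the classical operator.

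What this transport gives is only the augmented matrix. The entry $\epsilon\Delta_{I_w}(u_v)$ is $0$ if $\ell(w)+\ell(v)<N$, and $\mathfrak t\,\delta_{wv,w_0}$ if $\ell(w)+\ell(v)=N$. After reindexing $v\mapsto w_0v^{-1}$ this matrix is block triangular with $\mathfrak t$ on the diagonal.

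The fix is to replace the induction by a determinant:
\begin{itemize}
\item Set $D:=\det(\Delta_{I_w}(u_v))_{w,v}$. It has constant term $\pm\mathfrak t^{|W|}$.
\item Hence $D$ is a nonzerodivisor in $S\simeq R\llbracket x_1,\dots,x_n\rrbracket$ (compare lowest-degree terms).
\item Multiplying the system $\sum_w q_w\Delta_{I_w}(u_v)=0$, $v\in W$, by the adjugate gives $Dq_w=0$, so every $q_w=0$.
\end{itemize}

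Alternatively, the same adjugate and lowest-degree argument, combined with McCoy's theorem on maximal minors, derives injectivity directly from the $R$-linear independence of the $\epsilon\Delta_{I_w}$ in \cref{theo:augmented-basis}.
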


	Fix a reduced sequence $I_w$ for each $w\in W$.
	
	\begin{theo}\label{theo:basis-of-algebra-of-operators}
		The algebra $\D_F$ is free as a left $R\llbracket\Lambda\rrbracket_F$-module with basis $\{\Delta_{I_w}\}_{w\in W}$.
	\end{theo}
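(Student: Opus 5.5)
The plan is to transport the basis of \cref{prop:basis-elements} along the isomorphism $\phi$ of \cref{theo:iso}. By \cref{prop:basis-elements}, $\mathbf{D}_F$ is a free left $R\llbracket\Lambda\rrbracket_F$-module with basis $\{X_{I_w}\}_{w\in W}$. By \cref{theo:iso}, $\phi\colon\mathbf{D}_F\to\D_F$ is an isomorphism of $R$-algebras that is moreover $R\llbracket\Lambda\rrbracket_F$-linear. It therefore suffices to do two things: identify the left $R\llbracket\Lambda\rrbracket_F$-module structures on the two sides so that $\phi$ respects them, and compute $\phi(X_{I_w})$.

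First I would make the module structures explicit. On $\mathbf{D}_F$, an element $q\in R\llbracket\Lambda\rrbracket_F$ acts by left multiplication by $q=q\delta_1\in\mathcal{Q}^F_W$. On $\D_F$, it acts by left composition with the multiplication operator $m_q\colon u\mapsto qu$. Since $\phi$ is an algebra map, we have $\phi(q\cdot z)=\phi(q)\circ\phi(z)$. The statement that $\phi$ is $R\llbracket\Lambda\rrbracket_F$-linear means exactly that $\phi(q)=m_q$ (this is how $\phi$ is defined on $R\llbracket\Lambda\rrbracket_F$). Hence $\phi(qz)=m_q\circ\phi(z)$, so $\phi$ is an isomorphism of left $R\llbracket\Lambda\rrbracket_F$-modules.

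Next, because $\phi$ is multiplicative and $\phi(X_i)=\Delta_i$, for $I_w=(i_1,\dotsc,i_t)$ we get
\[\phi(X_{I_w})=\phi(X_{i_1})\circ\dotsb\circ\phi(X_{i_t})=\Delta_{i_1}\circ\dotsb\circ\Delta_{i_t}=\Delta_{I_w}.\]
A module isomorphism carries a basis to a basis. So every element of $\D_F$ can be written uniquely as $\sum_{w}m_{q_w}\circ\Delta_{I_w}$ with $q_w\in R\llbracket\Lambda\rrbracket_F$, which is the claim.

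The only point needing care is the compatibility in the first step. One must check that \cref{theo:iso} really gives $\phi(q)=m_q$, i.e., that the isomorphism is the one induced by the natural action of $\mathcal{Q}^F_W$ on $\mathcal{Q}^F$ restricted to $R\llbracket\Lambda\rrbracket_F$. Under that action $X_\alpha$ acts as $u\mapsto x_\alpha^{-1}(u-s_\alpha(u))=\Delta_\alpha(u)$, and $q$ acts by multiplication. If needed, I would verify this directly from these formulas, invoking \cref{assumption:sigma-regular} for faithfulness. Injectivity of $\phi$ is already part of \cref{theo:iso}, so no further independence argument is required.
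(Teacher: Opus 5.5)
Your proposal is correct and follows the paper's own proof, which simply cites \cref{prop:basis-elements} and \cref{theo:iso}: you transport the basis $\{X_{I_w}\}_{w\in W}$ along the $R\llbracket\Lambda\rrbracket_F$-linear algebra isomorphism $\phi$, using multiplicativity to get $\phi(X_{I_w})=\Delta_{I_w}$. Your check that the two left-module structures correspond under $\phi$ spells out a step the paper leaves implicit.
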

	\begin{proof}
		This follows from Proposition \ref{prop:basis-elements} and Theorem \ref{theo:iso}.
	\end{proof}
	
	Let $q^*$ be the operator in $\D_F$ corresponding to multiplication by $q\in R\llbracket\Lambda\rrbracket_F$.
	
	\begin{prop}\label{prop:presentation-demazure-operators}
		The elements $q\in R\llbracket\Lambda\rrbracket_F$ and the formal Demazure operators $\Delta_i:=\Delta_{\alpha_i}$, where $\alpha_i\in\Delta$, satisfy the following relations:
		\begin{enumerate}
			\item $\Delta_i\circ q^*=\Delta_i(q)+(s_i(q))^*\circ \Delta_i$;
			\item $\Delta_i^2=\kappa_i^*\circ \Delta_i$, where $\kappa_i:=\tfrac{1}{x_{\alpha_i}}+\tfrac{1}{x_{-\alpha_i}}\in R\llbracket\Lambda\rrbracket_F$;
			\item $\underbrace{\Delta_i\circ \Delta_j\circ \Delta_i\dotsb}_{\text{$m_{i,j}$-times}}-\underbrace{\Delta_j\circ \Delta_i\circ \Delta_j\dotsb}_{\text{$m_{i,j}$-times}}=\sum\limits_{w<w_0^{i,j}}(\eta_w^{i,j})^*\circ \Delta_{I_w}$, $\eta_w^{i,j}\in R\llbracket\Lambda\rrbracket_F$.
		\end{enumerate}
		Note that the ordering $<$ is with respect to the Bruhat order on $W$. These relations, together with the ring law in $R\llbracket\Lambda\rrbracket_F$ and the fact that the $\Delta_i$ are $R$-linear form a complete set of relations in $\D_F$.
	\end{prop}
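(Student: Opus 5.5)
The plan is to transport \cref{theo:presentation} along the isomorphism $\phi\colon\mathbf{D}_F\to\mcal{D}_F$ of \cref{theo:iso}. First I will pin down what $\phi$ does on generators. It is $R\llbracket\Lambda\rrbracket_F$-linear and sends $1\in\mathbf{D}_F$ to the identity operator, so for $q\in R\llbracket\Lambda\rrbracket_F\subset\mathbf{D}_F$ we get $\phi(q)=q\cdot\phi(1)=q^*$. Together with $\phi(X_i)=\Delta_i$, and the fact that $\phi$ is an $R$-algebra homomorphism, this gives $\phi(qX_{I})=q^*\circ\Delta_I$ for every sequence $I$. In particular $\phi(X_{I_w})=\Delta_{I_w}$.

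Next I will apply $\phi$ to each relation of \cref{theo:presentation}. Relation (1), $X_iq=\Delta_i(q)+s_i(q)X_i$, becomes $\Delta_i\circ q^*=(\Delta_i(q))^*+(s_i(q))^*\circ\Delta_i$; the term $\Delta_i(q)$ in the statement is read as multiplication by $\Delta_i(q)$. This can also be checked directly from the twisted Leibniz rule applied to $\Delta_i(qv)$. Relation (2) becomes $\Delta_i^2=\kappa_i^*\circ\Delta_i$. Relation (3) becomes the braid-type relation, with the same coefficients $\eta_w^{i,j}$, now acting as $(\eta_w^{i,j})^*$. So all three relations hold in $\mcal{D}_F$.

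For completeness, I will argue as follows. Let $\widetilde{\mathbf{D}}$ be the abstract $R$-algebra generated by $R\llbracket\Lambda\rrbracket_F$ (with its ring law) and by $R$-linear symbols $Y_i$, subject to the relations (1)--(3). \cref{theo:presentation} says that the map $Y_i\mapsto X_i$ induces an isomorphism $\widetilde{\mathbf{D}}\xrightarrow{\simeq}\mathbf{D}_F$. Composing with $\phi$ gives an isomorphism $\widetilde{\mathbf{D}}\to\mcal{D}_F$ sending $Y_i\mapsto\Delta_i$ and $q\mapsto q^*$. Hence the listed relations on the $\Delta_i$ and $q^*$ form a complete set of relations in $\mcal{D}_F$.

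The one point that needs care is generation. \cref{defn:endomorphisms} generates $\mcal{D}_F$ using all $\Delta_\alpha$ with $\alpha\in\Sigma$, not only simple roots. I will handle this with surjectivity of $\phi$: every element of $\mcal{D}_F$ is the image of an element of $\mathbf{D}_F$. By \cref{prop:basis-elements}, that element is an $R\llbracket\Lambda\rrbracket_F$-combination of the $X_{I_w}$. Therefore $\mcal{D}_F$ is generated by the $q^*$ and the simple $\Delta_i$, consistent with \cref{theo:basis-of-algebra-of-operators}. I expect no real obstacle beyond this bookkeeping, since the substance lies entirely in \cref{theo:presentation} and \cref{theo:iso}.
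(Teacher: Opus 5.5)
Your proposal is correct and follows the same route as the paper, whose proof simply transports the presentation of \cref{theo:presentation} through the isomorphism $\phi$ of \cref{theo:iso}. Your additional bookkeeping makes explicit what the paper leaves implicit: that $\phi(q)=q^*$, the completeness argument via the abstract algebra, and generation of $\mcal{D}_F$ by the $q^*$ and the simple $\Delta_i$.
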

	\begin{proof}
		This follows from Theorem \ref{theo:presentation} and Theorem \ref{theo:iso}.
	\end{proof}

	\begin{defn}(\cite[Def. 4.7]{CZZ1})
		We define $R$-linear operators $B_i^{(j)}\colon R\llbracket\Lambda\rrbracket_F\to R\llbracket\Lambda\rrbracket_F$, where $j\in\{0,1,-1\}$ and $i\in[n]$, by 
		\[B_i^{(-1)}:=\Delta_i,\quad B_i^{(0)}:=s_i,\quad B_i^{(1)}:=\text{multiplication by $(-x_i)$:= $-x_{\alpha_i}$}.\]
	\end{defn}
	
	Let $I=(i_1,\dotsc,i_t)$ be a sequence of simple roots, and let $E$ be a subset of $[t]$. We denote by $I_E$ the subsequence of $I$ consisting of all $i_j$'s with $j\in E$.
	
	\begin{prop}\label{prop:cocommutative-coproduct}
		There is a cocommutative coalgebra structure on $\D_F$. The coproduct, denoted $\triangle$, satisfies the following: given any sequence of simple roots $I=(i_1,\dotsc,i_t)$, 
		\[\triangle(\Delta_I)=\sum_{E_1,E_2\subseteq [t]}p_{E_1,E_2}^I\Delta_{I|_{E_1}}\otimes \Delta_{I|_{E_2}},\]
		where $p_{E_1,E_2}^I:=B_1\circ\dotsb\circ B_t(1)$, and the operator $B_j:R\llbracket\Lambda\rrbracket_F\to R\llbracket\Lambda\rrbracket_F$ is defined by 
		\[
		B_j:=\begin{cases}
			B_{i_j}^{(1)}\circ B_{i_j}^{(0)}, &\mbox{if $j\in E_1\cap E_2$;}\\
			B_{i_j}^{(-1)}, &\mbox{if $j\notin E_1\cup E_2$;}\\
			B_{i_j}^{(0)}, &\mbox{otherwise}.
		\end{cases}
		\]	
	\end{prop}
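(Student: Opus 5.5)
This result is \cite[\S 8--9]{CZZ1} transported along \cref{theo:iso}. The plan is to build the coproduct on the twisted formal group algebra side, then restrict it to $\mathbf{D}_F$ and transfer it to $\D_F$ via $\phi$.

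\textbf{The coproduct on the twisted algebra.} I would define an $R\llbracket\Lambda\rrbracket_F$-linear coproduct on $\mcal{Q}^F_W$, viewed as a left $\mcal{Q}^F$-module with basis $\{\delta_w\}_{w\in W}$, by
\[\triangle(q\delta_w):=q\delta_w\otimes\delta_w,\]
with tensor product over $\mcal{Q}^F$ and counit $\delta_w\mapsto 1$. Coassociativity, counitality and cocommutativity are immediate, since the basis elements are grouplike. The ring $\mcal{Q}^F\otimes_{\mcal{Q}^F}\mcal{Q}^F\cong\mcal{Q}^F$ is commutative, which is what makes the symmetric formula cocommutative.

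\textbf{Compatibility with multiplication.} Next I would check that $\triangle$ is multiplicative in the twisted sense: $\triangle(zz')=\triangle(z)\triangle(z')$, where the product is taken componentwise on the ``diagonal'' tensors. This is meaningful because the images of $\triangle$ lie in the subset of $\mcal{Q}^F_W\otimes\mcal{Q}^F_W$ on which componentwise multiplication is well defined. The check is a direct computation on the basis: $(q\delta_w)(q'\delta_{w'})=qw(q')\delta_{ww'}$, and both sides of the identity agree on such products.

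\textbf{The inductive formula.} For $X_i=\tfrac{1}{x_i}(1-\delta_{s_i})$, a direct expansion gives
\[\triangle(X_i)=X_i\otimes 1+1\otimes X_i-x_iX_i\otimes X_i,\]
using $\delta_{s_i}=1-x_iX_i$. I would then argue by induction on $t$. The inductive step computes $\triangle(X_I)=\triangle(X_{i_1})\triangle(X_{I'})$ and commutes the coefficients to the left using relation (\ref{itm:theo1}) of \cref{theo:presentation}. The three choices at position $j$ are:
\begin{itemize}
\item both factors $X_{i_j}$, i.e.\ $j\in E_1\cap E_2$, producing $-x_{i_j}s_{i_j}(\cdot)$, which is $B^{(1)}\circ B^{(0)}$;
\item neither factor, i.e.\ $j\notin E_1\cup E_2$, producing $\Delta_{i_j}$, which is $B^{(-1)}$;
\item exactly one factor, producing $s_{i_j}$, which is $B^{(0)}$.
\end{itemize}
Iterating this yields $p^I_{E_1,E_2}=B_1\circ\dotsb\circ B_t(1)$. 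The point to watch is that after moving a coefficient $c$ past $X_i\otimes 1$ we get $\Delta_i(c)\otimes1+s_i(c)X_i\otimes 1$, together with the analogous terms. The tensor product is over $R\llbracket\Lambda\rrbracket_F$, so coefficients may be gathered on the left factor, and this reproduces exactly the three-case recursion.

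\textbf{Descent to $\mathbf{D}_F$ and transfer to $\D_F$.} The main obstacle is showing that $\triangle(\mathbf{D}_F)\subseteq\mathbf{D}_F\otimes_{R\llbracket\Lambda\rrbracket_F}\mathbf{D}_F$, and that this latter module embeds into $\mcal{Q}^F_W\otimes\mcal{Q}^F_W$. For the embedding I would use freeness: by \cref{prop:basis-elements}, $\mathbf{D}_F$ is free with basis $\{X_{I_w}\}$, so its tensor square is free with basis $\{X_{I_v}\otimes X_{I_w}\}$. After localization these elements remain a basis of $\mcal{Q}^F_W\otimes\mcal{Q}^F_W$, and injectivity follows from $\Sigma$-regularity (\cref{assumption:sigma-regular}). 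Containment then follows from the explicit formula, since its coefficients $p^I_{E_1,E_2}$ lie in $R\llbracket\Lambda\rrbracket_F$ and $\mathbf{D}_F$ is generated by $R\llbracket\Lambda\rrbracket_F$ and the $X_I$. Finally I would transport $\triangle$ along the isomorphism $\phi$ of \cref{theo:iso}, which sends $X_I\mapsto\Delta_I$. This gives the stated cocommutative coalgebra structure on $\D_F$ and the stated formula for $\triangle(\Delta_I)$.
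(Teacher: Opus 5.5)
Your proposal is correct and reconstructs the argument the paper cites as its proof (\cite[Lemma 4.8, Prop.~9.5, Thm.~10.4]{CZZ1}): the grouplike coproduct $q\delta_w\mapsto q\delta_w\otimes\delta_w$ on $\mcal{Q}_W^F$, the three-case recursion for $\triangle(X_I)$, restriction to $\mathbf{D}_F$ via freeness and $\Sigma$-regularity, and transport along $\phi$. The only differences are cosmetic: the cited source phrases the recursion as an iterated twisted Leibniz rule for $\Delta_I(uv)$ (Lemma 4.8), and in your version, with coefficients kept on the left tensor factor, the ``second factor only'' coefficient $s_{i}(c)=c-x_{i}\Delta_{i}(c)$ arises only after combining the contributions of $1\otimes X_{i}$ and $-x_{i}X_{i}\otimes X_{i}$.
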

	\begin{proof}
		This follows from \cite[Lemma 4.8, Proposition 9.5, and Theorem 10.4]{CZZ1}.
	\end{proof}
	
	Let $\D_F^*=\Hom_{R\llbracket\Lambda\rrbracket_F}(\D_F,R\llbracket\Lambda\rrbracket_F)$ be the $R\llbracket\Lambda\rrbracket_F$-linear dual of $\D_F$. The cocommutative $R\llbracket\Lambda\rrbracket_F$-coalgebra structure on $\D_F$ induces a commutative $R\llbracket\Lambda\rrbracket_F$-algebra structure on $\D_F^*$. Let $B^*:=\{\Delta_{I_w}^*\}_{w\in W}$ be the basis of $\D_F^*$ dual to $B:=\{\Delta_{I_w}\}_{w\in W}$. By Theorem \ref{theo:basis-of-algebra-of-operators} and Proposition \ref{prop:cocommutative-coproduct}, given any $\Delta_{I_v}\in B$, we can write 
	\begin{equation}\label{eq:coproduct-coefficients}\triangle(\Delta_{I_v})=\sum_{w,w'\in W}q_{I_w,I_{w'}}^{I_v}\Delta_{I_w}\otimes \Delta_{I_{w'}}\text{ },\quad q_{I_w,I_{w'}}^{I_v}\in R\llbracket \Lambda\rrbracket_F.\end{equation}
	Thus, products of elements in the dual basis satisfy
	\begin{equation}\label{eq:product-coefficients}
		\Delta_{I_w}^*\cdot \Delta_{I_{w'}}^*=\sum_{v\in W}q_{I_w,I_{w'}}^{I_v}\Delta_{I_v}^*.
	\end{equation}
	
	\begin{egg}\label{egg:rank-1}
		Suppose $\Sigma$ is a root system of rank $1$. Then $\Sigma=\{\alpha_1,-\alpha_1\}$, and $W=\{e,s_1\}$. We compute
		\[\triangle(\Delta_{1})=\mathbf{1}\otimes \Delta_{1}+ \Delta_{1}\otimes \mathbf{1}-x_{\alpha_1}\Delta_{1}\otimes \Delta_{1}\quad\text{and}\quad \triangle(\mathbf{1})=\mathbf{1}\otimes \mathbf{1}.\]
		Thus,
		\[\Delta_{1}^*\cdot \Delta_{1}^*=-x_{\alpha_1}\Delta_{1}^*\quad\text{and}\quad\mathbf{1}^*\cdot \Delta_{1}^*=\Delta_{1}^*\cdot \mathbf{1}^*=\Delta_{1}^*\quad \text{and}\quad \mathbf{1}^*\cdot \mathbf{1}^*=\mathbf{1}^*.\]
		
	\end{egg}
	
	\section{Algebraic groups and homogeneous line bundles}\label{section:line-bundles}
	
	In this section, we recall some facts about algebraic groups and state a key result (\cref{lem:isomorphic-line-bundles0}) concerning homogeneous line bundles on complete flag varieties.
	
	Fix an algebraically closed field $k$ of characteristic $0$. Suppose $G$ is a semisimple algebraic group over $k$, and let $T$ be a maximal torus sitting inside a Borel subgroup $B$ of $G$. The discussion from here up to \cref{lem:image-of-Borel} closely follows \cite{H75}.
	The set of characters $T\to\mathbb{G}_m$, where $\mathbb{G}_m=\mrm{Spec}(k[x,x^{-1}])$, is a lattice $\Lambda$ under pointwise multiplication. The set of nonzero weights for the adjoint action of $T$ on the Lie algebra $\mathfrak{g}$ of $G$ forms a root system $\Sigma$ inside the character lattice $\Lambda$. Let $\Lambda^\vee$ be the lattice dual to $\Lambda$, and let $\Sigma^\vee$ be the set of coroots in $\Lambda^\vee$ with respect to $\Sigma$. The quadruple $\Psi=(\Sigma,\Lambda,\Sigma^\vee,\Lambda^\vee)$ is a semisimple root datum. Moreover, $\Psi$ is independent of the choice of maximal torus $T$, and any semisimple root datum gives rise to a unique semisimple algebraic group $G$ over $k$ with that root datum. 
	
	The \textit{Weyl group} $W$ of $G$ with respect to $T$ is the quotient $N_G(T)/T$, where $N_G(T)$ is the normalizer of $T$ in $G$, and $W$ is naturally identified with the Weyl group of $\Psi$. The semisimple algebraic groups associated with irreducible semisimple root data are called \textit{simple}.
	If $G$ is simple and $\Lambda=\Lambda_r$ (resp. $\Lambda=\Lambda_w$), then the group $G$ is called \textit{adjoint} (resp. \textit{simply-connected}) and we will often denote this group by $G^{\text{ad}}$ (resp. $G^{\text{sc}}$). The following theorem gives a criterion for two simple algebraic groups to be isomorphic. 
	
	\begin{theo}(cf. \cite[Thm. 32.1]{H75})
		Let $G$ and $G'$ be simple algebraic groups over $k$ with isomorphic root systems and isomorphic fundamental groups. Then $G\simeq G'$.
	\end{theo}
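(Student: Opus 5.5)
Since $G$ and $G'$ are simple over the algebraically closed field $k$ and have isomorphic root systems, the plan is to reduce the statement to the existence-and-uniqueness statement for semisimple root data recalled above: a semisimple root datum determines a unique semisimple group over $k$. So it suffices to show that the root data $\Psi=(\Sigma,\Lambda,\Sigma^\vee,\Lambda^\vee)$ of $G$ and $\Psi'=(\Sigma',\Lambda',\Sigma'^\vee,\Lambda'^\vee)$ of $G'$ are isomorphic. First I fix an isomorphism of root systems $\Sigma\simeq\Sigma'$ carrying a simple system $\Delta$ to a simple system $\Delta'$. It extends $\mathbb{Q}$-linearly to $\Lambda_\mathbb{Q}\simeq\Lambda'_\mathbb{Q}$, since both data are semisimple. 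It sends $\Lambda_r$ to $\Lambda'_r$, and, because it preserves the Cartan matrix, it sends fundamental weights to fundamental weights, hence $\Lambda_w$ to $\Lambda'_w$. It also intertwines the reflections, so it is compatible with the coroot pairings.

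The remaining issue is the lattice: we know $\Lambda_r\subseteq\Lambda\subseteq\Lambda_w$ and likewise for $\Lambda'$, and $\Lambda_w/\Lambda\simeq\Lambda'_w/\Lambda'$ abstractly. We need an isomorphism of root systems carrying $\Lambda/\Lambda_r$ onto $\Lambda'/\Lambda'_r$ inside $\Lambda_w/\Lambda_r$. When $\Lambda_w/\Lambda_r$ is cyclic, which by the table covers every type except $D_n$ with $n$ even, a subgroup is determined by its order. The order of $\Lambda/\Lambda_r$ is $|\Lambda_w/\Lambda_r|/|\Lambda_w/\Lambda|$, so the isomorphic fundamental groups force $\Lambda/\Lambda_r$ and $\Lambda'/\Lambda'_r$ to be the same subgroup, and we are done.

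The main obstacle is type $D_n$ with $n$ even, where $\Lambda_w/\Lambda_r\simeq\mathbb{Z}/2\times\mathbb{Z}/2$ has three subgroups of order $2$. Here I would use diagram automorphisms, which are automorphisms of the root system. For $n\neq4$, the swap of the two short-leg nodes interchanges the two half-spin classes and fixes the vector class. So the half-spin subgroups are related, while the vector subgroup (giving $\mathrm{SO}_{2n}$) might not be. For $D_4$, triality permutes all three order-$2$ subgroups transitively.

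For $n\ge 6$ even, $\mathrm{SO}_{2n}$ and a half-spin group both have fundamental group $\mathbb{Z}/2$ yet are not isomorphic. So the literal statement fails there, and the cited theorem of \cite{H75} must be read with "fundamental group" meaning the subgroup $\Lambda/\Lambda_r$ of $\Lambda_w/\Lambda_r$ up to automorphisms of the root system. With that reading, I compose the root-system isomorphism with a suitable diagram automorphism to match the sublattices. The resulting isomorphism of root data then gives $G\simeq G'$ by the uniqueness statement.
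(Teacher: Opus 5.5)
Your argument is correct. It is the natural unpacking of the paper's bare citation of Humphreys' isomorphism theorem (a semisimple group is determined by its root datum): you transport $\Lambda_r\subseteq\Lambda\subseteq\Lambda_w$ along a root-system isomorphism, and when $\Lambda_w/\Lambda_r$ is cyclic the subgroup $\Lambda/\Lambda_r$ is determined by its order, which the fundamental groups fix.

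You are also right that the statement as printed fails for $D_n$ with $n\ge 6$ even. To make the counterexample airtight, use three facts:
\begin{itemize}
\item a group isomorphism induces an isomorphism of root data;
\item $\mathrm{Aut}(\Sigma)=W\rtimes\mathrm{Aut}(\text{Dynkin diagram})$;
\item $W$ acts trivially on $\Lambda_w/\Lambda_r$, since $s_\alpha(\lambda)-\lambda=-\alpha^\vee(\lambda)\alpha$.
\end{itemize}
Together these show that nothing carries the vector class to a half-spin class, so $\mathrm{SO}_{2n}\not\simeq\mathrm{HSpin}_{2n}$.

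This error is harmless for the paper. It only applies the theorem to simply-connected and adjoint groups (e.g.\ $\mathrm{Spin}(5)\simeq\mathrm{Sp}(4)$), where $\Lambda$ is forced to be $\Lambda_w$ or $\Lambda_r$.
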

	Below we list the simply-connected $G^{\text{sc}}$ and adjoint $G^{\text{ad}}$ groups for the classical Dynkin types:
	\begin{center}
		\begin{tabular}{ |c |c | c | }
			\hline
			Root system & $G^\text{sc}$ & $G^\text{ad}$ \\
			\hline\hline
			$A_n$ & $\mrm{SL}(n+1,k)$ & $\mrm{PGL}(n+1,k)$\\
			\hline
			$B_n$ & $\mrm{Spin}(2n+1,k)$ & $\mrm{SO}(2n+1,k)$\\
			\hline
			$C_n$ & $\mrm{Sp}(2n,k)$ & $\mrm{PSp}(2n,k)$\\
			\hline
			$D_n$ & $\mrm{Spin}(2n,k)$ & $\mrm{PGO}^+(2n,k)$ \\
			\hline
		\end{tabular}
	\end{center}

	The following two results will become useful later in this section.
	
	\begin{lem}(cf. \cite[\S 21.3, Cor. C]{H75})\label{lem:image-of-Borel}
		Let $\phi:G\to G'$ be a surjective morphism of connected algebraic groups over $k$. If $H$ is a Borel subgroup (resp. maximal torus; resp. maximal connected unipotent subgroup) in $G$, then $\phi(H)$ is a Borel subgroup (resp. maximal torus; resp. maximal connected unipotent subgroup) in $G'$.
	\end{lem}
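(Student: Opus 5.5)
Because the map $\phi$ is surjective and $G'$ is connected, I will first reduce to the structure theory of $G$. The plan is to treat the Borel case first, and then derive the torus and unipotent cases from it.

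\emph{Borel subgroups.} Let $H=B$ be a Borel subgroup of $G$. Then $\phi(B)$ is a closed subgroup of $G'$: images of algebraic group homomorphisms are closed. It is connected and solvable, being the image of a connected solvable group. Choose a Borel subgroup $B'$ of $G'$ containing $\phi(B)$. The map $\phi$ induces a surjective morphism $G/B\to G'/\phi(B)$. Since $G/B$ is complete, $G'/\phi(B)$ is complete, so $\phi(B)$ is a parabolic subgroup of $G'$. A parabolic subgroup contains a Borel subgroup, and a connected solvable parabolic subgroup must equal a Borel subgroup, since Borels are maximal among connected solvable subgroups. Hence $\phi(B)=B'$.

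\emph{Maximal tori.} Let $T$ be a maximal torus of $G$. The image $\phi(T)$ is a torus, being a closed connected image of a diagonalizable group. To show it is maximal, I will use the fact that $\bigcup_{g}gTg^{-1}$ contains the semisimple elements of $G$ and is dense in $G$ for connected $G$. Alternatively, take a Borel $B\supseteq T$ with $B=T\ltimes B_u$. Then $\phi(B)=\phi(T)\phi(B_u)$, where $\phi(B_u)$ is unipotent and $\phi(T)$ consists of semisimple elements. Since $\phi(B)$ is a Borel subgroup $B'$ of $G'$, and $\phi(B_u)$ is a connected normal unipotent subgroup of $B'$, we get $\phi(B_u)\subseteq B'_u$. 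Comparing via the Jordan decomposition, $B'=\phi(T)B'_u$, so $B'/B'_u$ is a quotient of $\phi(T)$. Therefore $\dim\phi(T)\ge\dim B'/B'_u$, which is the rank of $G'$. This forces $\phi(T)$ to be a maximal torus.

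\emph{Maximal connected unipotent subgroups.} These are the groups $B_u$ for Borel subgroups $B$. The argument above gives $\phi(B_u)\subseteq B'_u$. Furthermore, $B'=\phi(T)\phi(B_u)$ with $\phi(T)\cap B'_u$ trivial, because it is a unipotent subgroup of a torus. Hence $\phi(B_u)=B'_u$.

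The main obstacle is the dimension comparison in the torus step. I would handle it via the decomposition $B'=T'\ltimes B'_u$ together with the fact that images preserve Jordan decomposition. Since the statement is quoted from \cite{H75}, I would ultimately cite \S21.3 there for details.
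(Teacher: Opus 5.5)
Your argument is correct and is essentially the standard proof of \cite[\S 21.3, Cor.~C]{H75}; the paper cites this result rather than proving it. In that proof, the surjection $G/B\to G'/\phi(B)$ makes $\phi(B)$ a connected solvable parabolic subgroup, hence a Borel subgroup, and writing $B=TB_u$ with $\phi(B_u)\subseteq B'_u$ and $\phi(T)\cap B'_u=\{e\}$ gives both the torus and unipotent cases. Two small remarks: the appeal to the Jordan decomposition is unnecessary, since $B'=\phi(T)\phi(B_u)\subseteq\phi(T)B'_u$ directly, and the abandoned density remark can be dropped.
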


	\begin{theo}(cf. \cite[Cor. 6.3.3 and Thm. 6.3.5]{S98} and \cite[Thm. 10.6]{B69})\label{theo:Borel-subgroup-factors}
		Let $G$ be a semisimple algebraic group over $k$, and let $B$ be a Borel subgroup of $G$. The set of unipotent elements $B_u$ in $B$ is a closed, connected, nilpotent, normal subgroup of $B$, and $B/B_u$ is a torus. Finally, if $T$ is any maximal torus of $G$ sitting in $B$, then $B=T\rtimes B_u$ (this is a semidirect product), and the restriction of the projection $B\to B/B_u$ to $T$ defines an isomorphism $T\simeq B/B_u$.
	\end{theo}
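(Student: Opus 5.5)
The statement is the standard structure theorem for Borel subgroups of a semisimple group. I plan to prove it from Lie–Kolchin together with the Jordan decomposition, as in \cite{B69}.

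\textbf{Step 1: $B_u$ is a closed, connected, nilpotent, normal subgroup.} First I use that $B$ is connected and solvable. By Lie–Kolchin, $B$ embeds into the upper triangular group $\mathrm{T}_n$ of some $\mrm{GL}_n$ via a faithful representation. Under this embedding, the unipotent elements of $B$ are exactly $B\cap \mrm{U}_n$, where $\mrm{U}_n$ is the strictly unipotent upper triangular subgroup. This is because an upper triangular matrix is unipotent if and only if its diagonal is trivial. Hence:
\begin{itemize}
\item $B_u$ is a closed subgroup;
\item $B_u$ is the kernel of the diagonal homomorphism $B\to \mrm{T}_n/\mrm{U}_n\simeq \mathbb{G}_m^n$, so it is normal;
\item $B_u\subseteq \mrm{U}_n$, so it is nilpotent.
\end{itemize}
Connectedness is the delicate point. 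I would argue that a closed subgroup consisting of unipotent elements in characteristic $0$ is connected: each $u\in B_u$ lies on the image of the one-parameter subgroup $t\mapsto \exp(t\log u)$, which is a connected subgroup through $e$. Alternatively, one can quote the general result that unipotent groups in characteristic zero are connected.

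\textbf{Step 2: $B/B_u$ is a torus.} The quotient $B/B_u$ is connected and commutative, since it embeds in $\mathbb{G}_m^n$ through the diagonal map. It also consists of semisimple elements. Indeed, if $b=b_sb_u$ is the Jordan decomposition, then $b_u\in B_u$ because Jordan components stay in closed subgroups, so every class in $B/B_u$ is represented by the semisimple element $b_s$. A connected commutative group made of semisimple elements is a torus.

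\textbf{Step 3: the semidirect product $B=T\rtimes B_u$.} Let $T\subseteq B$ be a maximal torus of $G$. Then $T\cap B_u$ is trivial, since an element of it is both semisimple and unipotent. Thus $T\to B/B_u$ is injective, and its image is a closed torus inside $B/B_u$.
\begin{itemize}
\item \emph{Main obstacle: surjectivity.} I would show $\dim T=\dim B/B_u$. The approach is the conjugacy theorem for maximal tori in connected solvable groups: for each $b\in B$, the semisimple part $b_s$ lies in some maximal torus of $B$, and that torus is $B$-conjugate to $T$. Hence $b_s\in xTx^{-1}$ for some $x\in B$. Since $B/B_u$ is commutative, the image of $b_s$ equals the image of an element of $T$. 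So $T$ surjects onto $B/B_u$.
\item \emph{Identifying $T$ as a maximal torus of $B$.} This uses \cref{lem:image-of-Borel}: a maximal torus of $G$ inside $B$ is maximal in $B$.
\end{itemize}
This gives an isomorphism of tori $T\simeq B/B_u$, as required. Finally, $B=TB_u$ with $T\cap B_u=\{e\}$ and $B_u$ normal, so $B=T\rtimes B_u$, and the multiplication map is an isomorphism of varieties.
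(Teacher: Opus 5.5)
Your sketch is correct and follows essentially the same route as the paper. The paper gives no argument of its own and simply cites the structure theory of connected solvable groups in \cite{B69} and \cite{S98}. Your Steps 1--2 reconstruct the Lie--Kolchin/Jordan decomposition part of that theory. The $\exp(t\log u)$ argument is fine in characteristic $0$ once you add that this one-parameter subgroup is the Zariski closure of the powers of $u$, and hence lies in the closed subgroup $B_u$. The surjectivity in Step 3 is deduced from conjugacy of maximal tori and from the fact that semisimple elements lie in maximal tori. The cited references establish these facts together with the decomposition $B=T\rtimes B_u$ itself, so that step is a reduction to the references rather than an independent proof.

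One citation is misplaced: \cref{lem:image-of-Borel} is about images under surjective homomorphisms and is not what you need. Maximality of $T$ in $B$ is immediate, since any torus of $B$ containing $T$ is a torus of $G$.
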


	Let $G$ be a semisimple algebraic group defined over $k$, and suppose $T$ is a maximal torus sitting inside a Borel subgroup $B$ of $G$. If $\lambda$ is a character of $T$, then $\lambda$ determines a one-dimensional irreducible representation $V_\lambda$ of $T$. By Theorem \ref{theo:Borel-subgroup-factors}, $B/B_u$ is a torus, and there is sequence of homomorphisms $B\to B/B_u\to T$, where the second homomorphism is an isomorphism. Thus, every character of $T$ lifts to a character $\lambda$ of $B$. The group $B$ acts on $V_\lambda$ by $b\cdot v=\lambda(b)^{-1}v$ for all $b\in B$ and $v\in V_\lambda$.
	
	\begin{theo}(cf. \cite[\S 8.5]{S98})\label{theo:line-bundle}
		Let $G$ be a semisimple algebraic group over $k$, and let $B$ be a Borel subgroup of $G$ containing a maximal torus $T$ of $G$.
		Let $\lambda$ be a character of $T$, and hence, of $B$. The set
		\[\mcal{L}(\lambda):=G\times_B V_\lambda=G\times V_\lambda/((g,v)\sim (gb,b^{-1}\cdot v))\]
		is an algebraic variety, and it is the total space of a line bundle over the complete flag variety $G/B$. The morphism $\pi\colon \mcal{L}(\lambda)\to G/B$ defining this line bundle sends $(g,v)B\mapsto gB$ for all $(g,v)B\in \mcal{L}(\lambda)$.
	\end{theo}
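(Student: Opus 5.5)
The plan is to show that the quotient map $p\colon G\to G/B$ is a Zariski-locally trivial principal $B$-bundle. Once that is known, $G\times_B V_\lambda$ can be built by gluing trivial line bundles along a $\mathbb{G}_m$-valued cocycle determined by $\lambda$. The one structural input I would take from the theory of reductive groups (as in \cite{S98}, \cite{B69}) is the big cell. Let $U^-$ be the unipotent radical of the Borel subgroup opposite to $B$ with respect to $T$. Then the multiplication map
\[
\mu\colon U^-\times B\to G
\]
is an isomorphism of varieties onto an open dense subset $\Omega=U^-B\subseteq G$. Consequently $U^-\to G/B$, $u\mapsto uB$, is an open immersion onto $p(\Omega)$.

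\textbf{Step 1 (local sections).} The translates $g\,p(\Omega)$, $g\in G$, cover $G/B$. For $g\Omega$ there is a section $\sigma_g\colon g\,p(\Omega)\to G$ given by $\sigma_g(guB)=gu$. Inverting $\mu$ gives an isomorphism $p^{-1}(g\,p(\Omega))=g\Omega\simeq g\,p(\Omega)\times B$, and hence a trivialization of $p$ over $g\,p(\Omega)$.

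\textbf{Step 2 (gluing).} Over $\mathcal{U}_g:=g\,p(\Omega)$ I would declare $\mathcal{L}(\lambda)|_{\mathcal{U}_g}=\mathcal{U}_g\times V_\lambda$, corresponding to the classes $[\sigma_g(x),v]$. On an overlap $\mathcal{U}_g\cap\mathcal{U}_{h}$ write $\sigma_g(x)=\sigma_h(x)\,b_{hg}(x)$. Here $b_{hg}\colon \mathcal{U}_g\cap\mathcal{U}_h\to B$ is a morphism, since it equals $\sigma_h(x)^{-1}\sigma_g(x)$, which lies in $B$ and is regular. The relation $(gb,v)\sim(g,b\cdot v)$, where $b\cdot v=\lambda(b)^{-1}v$, gives the transition function
\[
(x,v)\mapsto \bigl(x,\lambda(b_{hg}(x))^{-1}v\bigr).
\]
This is a regular map into $\mathbb{G}_m$, and the cocycle condition $b_{kg}=b_{kh}b_{hg}$ holds by construction. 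So the charts glue to a scheme $L$ with a morphism $\pi\colon L\to G/B$ making it a line bundle. The glued space is separated because it is a vector bundle over the separated variety $G/B$; it is also quasi-projective, hence a variety.

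\textbf{Step 3 (identification with $G\times_B V_\lambda$).} The map $G\times V_\lambda\to L$ is given on $g\Omega\times V_\lambda$ by $(gub,v)\mapsto (guB,\lambda(b)^{-1}v)$ in the chart $\mathcal{U}_g$. I would check that it is compatible on overlaps, that its fibres are exactly the equivalence classes of $\sim$, and that it is locally the projection $\mathcal{U}_g\times B\times V_\lambda\to\mathcal{U}_g\times V_\lambda$. The last point shows $L$ is the geometric quotient $G\times_B V_\lambda$, and $\pi$ is induced by $(g,v)\mapsto gB$, as claimed.

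The main obstacle is Step 1, namely that $\mu$ is an isomorphism onto an open subset rather than merely a bijective morphism. I would take this from the structure theory: the Bruhat decomposition, with $\Omega$ the cell of $w_0$, together with characteristic $0$ to get separability, so that a bijective morphism to a normal variety is an isomorphism. The rest is formal gluing.
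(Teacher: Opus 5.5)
Your proposal is correct. It is the standard argument behind the reference \cite[\S 8.5]{S98}, which the paper cites in place of a proof: the big cell $U^-B=\dot w_0 B\dot w_0 B$ gives local sections of $G\to G/B$, and $G\times_B V_\lambda$ is then glued from trivial line bundles with transition functions $\lambda(b_{hg})^{-1}$ and identified with the geometric quotient. (Strictly, $U^-B$ is a translate of the Bruhat cell of $w_0$ rather than the cell itself, which is harmless, and the two points you assert without proof are routine: $U^-\to p(\Omega)$ is an isomorphism because $\Omega\to U^-$, $ub\mapsto u$, is constant on $B$-cosets and descends through the quotient $\Omega\to p(\Omega)$, and the total space is quasi-projective because it is open in its projective completion, a $\mathbb{P}^1$-bundle over the projective variety $G/B$.)
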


	\begin{rmk}
		There is a natural $G$-action on $\mcal{L}(\lambda)$ given by $h\cdot (g,v)=(hg, v)$ for all $h,g\in G$, $v\in V_\lambda$. The line bundle $\pi:\mcal{L}(\lambda)\to G/B$ is \textit{$G$-equivariant}:
		$\pi(h\cdot(g,v))=h\cdot \pi(g,v)$ for all $g,h\in G$, $v\in V_\lambda$. The line bundle $\mcal{L}(\lambda)$ is called a \textit{homogeneous line bundle}.
	\end{rmk}

	Let $G$ and $G_1$ be semisimple algebraic groups over $k$, and fix maximal tori $T$ and $T_1$ in $G$ and $G_1$, respectively, with corresponding root data $\Psi=(\Sigma,\Lambda,\Sigma^\vee,\Lambda^\vee)$ and $\Psi_1=(\Sigma_1,\Lambda_1,\Sigma_1^\vee,\Lambda_1^\vee)$.
	
	\begin{defn}(cf. \cite[\S 1]{S99}.)
		A \emph{central isogeny}\index{central isogeny of root data} of root data $\sigma':\Psi\to\Psi_1$ is an injective lattice homomorphism $\sigma:\Lambda\to \Lambda_1$, such that the dual map $\sigma^\vee\colon \Lambda_1^\vee\to\Lambda^\vee$ is also injective, and such that $\sigma$ induces a bijection $\sigma|_{\Sigma}\colon\Sigma\to \Sigma_1$ satisfying
		\[\sigma^\vee((\sigma(\alpha))^\vee)=\alpha^\vee,\quad \alpha\in\Sigma.\]
	\end{defn}
	
	\begin{defn}(cf. \cite[\S 1]{S99}.)
		A \emph{central isogeny} of algebraic groups $\phi:G_1\to G$ is a surjective morphism whose kernel is finite and central in $G_1$.
	\end{defn}
	
	\begin{prop}(cf. \cite[\S 1]{S99})\label{prop:isogeny-theorem}
		Let $\phi:G_1\to G$ be a central isogeny, mapping $T_1$ onto $T$. Then $\phi$ induces a central isogeny of root data $\sigma':\Psi\to \Psi_1$ such that $\sigma(\lambda)=\lambda\circ \phi|_{T_1}$ for all $\lambda\in \Lambda$. 
	\end{prop}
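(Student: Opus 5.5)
The plan is to build $\sigma$ directly from $\phi$ by pulling back characters, and then check the defining properties of a central isogeny of root data one at a time. For $\lambda\in\Lambda$, i.e.\ a character $T\to\mathbb{G}_m$, set $\sigma(\lambda):=\lambda\circ\phi|_{T_1}\in\Lambda_1$. This is clearly a group homomorphism. It is injective because $\phi|_{T_1}\colon T_1\to T$ is surjective: if $\lambda\circ\phi|_{T_1}$ is trivial, then $\lambda$ is trivial on $\phi(T_1)=T$.

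Next I identify the dual map. Write $\langle\cdot,\cdot\rangle$ for the pairing of characters with cocharacters, so that $\lambda\circ\mu(z)=z^{\langle\lambda,\mu\rangle}$. For a cocharacter $\mu\colon\mathbb{G}_m\to T_1$ in $\Lambda_1^\vee$ we have
\[\langle\sigma(\lambda),\mu\rangle=\langle\lambda,\phi\circ\mu\rangle.\]
Hence $\sigma^\vee(\mu)=\phi\circ\mu$. Injectivity of $\sigma^\vee$ uses finiteness of $\ker\phi$. If $\phi\circ\mu$ is trivial, then $\mu(\mathbb{G}_m)$ is a connected subgroup of the finite group $\ker\phi$, so it is trivial and $\mu=0$.

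For roots, I pass to Lie algebras. The differential $d\phi\colon\mathfrak g_1\to\mathfrak g$ is equivariant in the sense that
\[d\phi(\mathrm{Ad}(t)X)=\mathrm{Ad}(\phi(t))\,d\phi(X),\qquad t\in T_1 .\]
Since $\mathrm{char}\,k=0$ and $\ker\phi$ is finite, $\phi$ is separable, so $d\phi$ is an isomorphism. It also carries $\mathrm{Lie}(T_1)$ onto $\mathrm{Lie}(T)$, because $\phi(T_1)=T$ and $\dim T_1=\dim T$. By the equivariance, if $X\in\mathfrak g$ lies in the root space $\mathfrak g_\alpha$, then $d\phi^{-1}(X)$ is a weight vector for $T_1$ of weight $\alpha\circ\phi|_{T_1}=\sigma(\alpha)$. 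Therefore $\sigma(\alpha)\in\Sigma_1$, and since $d\phi$ is bijective on the nonzero weight spaces, $\sigma|_\Sigma\colon\Sigma\to\Sigma_1$ is a bijection. It is injective because $\sigma$ is.

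The step I expect to be the main obstacle is the coroot identity $\sigma^\vee((\sigma(\alpha))^\vee)=\alpha^\vee$, that is, $\phi\circ\sigma(\alpha)^\vee=\alpha^\vee$. I would use the characterization of $\alpha^\vee$ as the unique cocharacter of $T$ with image in the rank-one subgroup $G_\alpha=\langle U_\alpha,U_{-\alpha}\rangle$ satisfying $\langle\alpha,\alpha^\vee\rangle=2$. Since $d\phi$ maps $\mathfrak g_{1,\pm\sigma(\alpha)}$ onto $\mathfrak g_{\pm\alpha}$, the map $\phi$ sends the root subgroups $U_{\pm\sigma(\alpha)}$ onto $U_{\pm\alpha}$, using \cref{lem:image-of-Borel} for maximal connected unipotent subgroups. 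Consequently $\phi(G_{1,\sigma(\alpha)})=G_\alpha$. Then $\phi\circ\sigma(\alpha)^\vee$ is a cocharacter of $T\cap G_\alpha$, and
\[\langle\alpha,\phi\circ\sigma(\alpha)^\vee\rangle=\langle\sigma(\alpha),\sigma(\alpha)^\vee\rangle=2.\]
By uniqueness, $\phi\circ\sigma(\alpha)^\vee=\alpha^\vee$. If the uniqueness characterization needs justification, I would reduce to the explicit homomorphism $\mathrm{SL}_2\to G_\alpha$ and check the claim on the diagonal torus.
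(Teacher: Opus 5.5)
Your proposal is correct. The paper gives no proof of this proposition and only points to Steinberg \cite[\S 1]{S99}. That source works in arbitrary characteristic, where roots and coroots of the two groups correspond only up to factors $q(\alpha)$ that are powers of the characteristic exponent, and centrality means $q(\alpha)=1$. The argument there goes through root subgroups and the rank-one subgroups $G_\alpha$.

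Your route has the same skeleton: pull back characters, match roots, then match coroots inside $G_\alpha$. The difference is that you use the paper's standing hypothesis $\mathrm{char}\,k=0$. Separability makes $d\phi$ a $T$-equivariant isomorphism, which gives the bijection $\Sigma\to\Sigma_1$ at once and excludes $p$-power factors. You then finish with the uniqueness of a cocharacter into the one-dimensional torus $(T\cap G_\alpha)^\circ$ pairing to $2$ with $\alpha$. This is shorter and self-contained. It also shows why characteristic $0$ matters for the statement under the paper's set-theoretic definition of central isogeny: in characteristic $p$ a Frobenius isogeny has trivial kernel, yet $\sigma(\alpha)=p\alpha$ is not a root.

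One step needs a different justification. The equality $\phi(U_{1,\pm\sigma(\alpha)})=U_{\pm\alpha}$ does not follow from \cref{lem:image-of-Borel}, which only concerns Borel subgroups, maximal tori and maximal connected unipotent subgroups. Argue instead as follows. The image $\phi(U_{1,\sigma(\alpha)})$ is a connected one-dimensional unipotent subgroup normalized by $\phi(T_1)=T$. By separability, its Lie algebra is $d\phi(\mathfrak{g}_{1,\sigma(\alpha)})=\mathfrak{g}_\alpha$. Hence it equals $U_\alpha$ by the standard characterization of root subgroups.
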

	
	Let $G^\text{sc}$ be the \textit{simply-connected cover} of $G$, i.e., $G^{\text{sc}}$ is the semisimple simply-connected algebraic group over $k$ with the same Dynkin type as $G$, such that there is a central isogeny $\phi:G^\text{sc}\to G$ (\cite[Exercise 10.1.4(1)]{S98}). Let $\Psi^{\text{sc}}=(\Sigma^{\text{sc}},\Lambda^{\text{sc}},(\Sigma^{\text{sc}})^\vee,(\Lambda^{\text{sc}})^\vee)$ be the root datum of $G^{\text{sc}}$. If $B^\text{sc}$ is a Borel subgroup of $G^\text{sc}$ with maximal unipotent connected subgroup $B_u^\text{sc}$, then, by Lemma \ref{lem:image-of-Borel}, $B:=\phi(B^\text{sc})$ is a Borel subgroup in $G$ with maximal unipotent connected subgroup $B_u:=\phi(B_u^\text{sc})$. By Theorem \ref{theo:Borel-subgroup-factors}, we can view $B^\text{sc}/B_u^\text{sc}$ and $B/B_u$ as maximal tori in $G^\text{sc}$ and $G$, respectively. Set $T^\text{sc}:=B^\text{sc}/B_u^\text{sc}$ and $T:=\phi(T^\text{sc})=B/B_u$.
	Let $\sigma:\Lambda\to \Lambda^\text{sc}$ be the injective homomorphism on character lattices induced by $\phi$. If $p:B\to T$ and $p^\text{sc}:B^\text{sc}\to T^\text{sc}$ are the canonical projections onto the quotients, then the following diagram commutes:
	\[
	\begin{tikzcd}
		B^\text{sc} \arrow[r,"p^\text{sc}"] \arrow[d,"\phi|_{B^\text{sc}}"'] & T^\text{sc} \arrow[d,"\phi|_{T^\text{sc}}"] \\
		B \arrow[r,"p"'] & T 
	\end{tikzcd} \ .
	\]
	Recall that we can lift a character of $T^\text{sc}$ (resp. $T$) to a character of $B^\text{sc}$ (resp. $B$) by composing the character on the right by $p^\text{sc}$ (resp. $p$). Given a character $\lambda$ of $T$, we have by Proposition \ref{prop:isogeny-theorem} that $\lambda\circ \phi|_{T^\text{sc}}=\sigma(\lambda)$. Thus, $\lambda\circ p\circ \phi|_{B^\text{sc}}=\lambda\circ \phi|_{T^\text{sc}}\circ p^\text{sc}=\sigma(\lambda)\circ p^\text{sc}$. From now on, we will abuse notation and denote the character $\lambda\circ p$ (resp. $\sigma(\lambda)\circ p^\text{sc}$) of $B$ (resp. $B^\text{sc}$) by $\lambda$ (resp. $\sigma(\lambda)$). Thus, $\lambda\circ \phi|_{B^\text{sc}}=\sigma(\lambda)$, which will be used in the proof of \cref{lem:isomorphic-line-bundles0}. 
	
	It follows from \cite[Theorem 22.6]{B69} that the central isogeny $\phi\colon G^{\text{sc}}\to G$ induces an isomorphism of flag varieties $\phi_{G^{\text{sc}}/B^{\text{sc}}}\colon G^{\text{sc}}/B^{\text{sc}}\xrightarrow{\simeq } G/B$. We will now prove the main result of this section.

	\begin{prop}\label{lem:isomorphic-line-bundles0}
		Suppose $\lambda^\text{sc}\in \Lambda^\text{sc}$, $\lambda\in \Lambda$, and $\lambda^\text{sc}=\sigma(\lambda)$. 
		\begin{enumerate}
			\item The pullback $(\phi_{G^{\text{sc}}/B^{\text{sc}}})^*(\mcal{L}(\lambda))\to G^{\text{sc}}/B^{\text{sc}}$ is isomorphic to the line bundle $\mcal{L}(\lambda^{\text{sc}})\to G^{\text{sc}}/B^{\text{sc}}$.
			\item The pushforward $(\phi_{G^{\text{sc}}/B^{\text{sc}}})_*(\mcal{L}(\lambda^{\text{sc}}))\to G/B$ is isomorphic to the line bundle $\mcal{L}(\lambda)\to G/B$.
		\end{enumerate} 
	\end{prop}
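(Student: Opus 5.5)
To prove part (1), I would write down an explicit morphism of total spaces over $G^{\text{sc}}/B^{\text{sc}}$ and show it is an isomorphism of line bundles. Write $\psi:=\phi_{G^{\text{sc}}/B^{\text{sc}}}$, so $\psi(gB^{\text{sc}})=\phi(g)B$. The pullback is
\[\psi^*\mcal{L}(\lambda)=\{(gB^{\text{sc}},[h,v])\in G^{\text{sc}}/B^{\text{sc}}\times \mcal{L}(\lambda)\mid \phi(g)B=hB\}.\]
Since $V_\lambda$ and $V_{\lambda^{\text{sc}}}$ are both one-dimensional, I would identify both with $k$. I would then define
\[\Phi\colon \mcal{L}(\lambda^{\text{sc}})=G^{\text{sc}}\times_{B^{\text{sc}}}V_{\lambda^{\text{sc}}}\to \psi^*\mcal{L}(\lambda),\qquad [g,v]\mapsto (gB^{\text{sc}},[\phi(g),v]).\]

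\textbf{Step 1: $\Phi$ is well defined.} Replacing $(g,v)$ by $(gb,b^{-1}\cdot v)$ with $b\in B^{\text{sc}}$, we have $b^{-1}\cdot v=\lambda^{\text{sc}}(b)v$. By the identity $\lambda\circ\phi|_{B^{\text{sc}}}=\sigma(\lambda)=\lambda^{\text{sc}}$ established just before the proposition, this equals $\lambda(\phi(b))v=\phi(b)^{-1}\cdot v$. Hence
\[[\phi(gb),b^{-1}\cdot v]=[\phi(g)\phi(b),\phi(b)^{-1}\cdot v]=[\phi(g),v],\]
so $\Phi$ is independent of the representative. The map is clearly a morphism commuting with the projections to $G^{\text{sc}}/B^{\text{sc}}$, it is linear on fibers, and it is $G^{\text{sc}}$-equivariant.

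\textbf{Step 2: $\Phi$ is an isomorphism.} A morphism of line bundles over the same base that is a linear isomorphism on every fiber is an isomorphism of line bundles. So I only need fiberwise injectivity. Over $gB^{\text{sc}}$, the map is $v\mapsto[\phi(g),v]$, which is a nonzero linear map between one-dimensional spaces and hence bijective. If a more hands-on argument is wanted, I would instead use local triviality over the big-cell translates $gU^-B/B$, where both bundles are trivialized compatibly.

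\textbf{Part (2).} Since $\psi$ is an isomorphism, $\psi_*=(\psi^{-1})^*$, and $\psi_*\psi^*\mcal{L}\simeq\mcal{L}$. Pushing forward the isomorphism of part (1) therefore gives
\[\psi_*\mcal{L}(\lambda^{\text{sc}})\simeq\psi_*\psi^*\mcal{L}(\lambda)\simeq\mcal{L}(\lambda).\]
Alternatively, one can directly write down the map $\mcal{L}(\lambda^{\text{sc}})\to\mcal{L}(\lambda)$, $[g,v]\mapsto[\phi(g),v]$, which lies over $\psi$ and is a fiberwise isomorphism.

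\textbf{Main obstacle.} The only substantive point is the compatibility of characters in Step 1, namely that $\lambda^{\text{sc}}$ is exactly the pullback of $\lambda$ along $\phi|_{B^{\text{sc}}}$ including on the unipotent radical. This is provided by the commutative diagram and the identity $\lambda\circ\phi|_{B^{\text{sc}}}=\sigma(\lambda)$ stated right before the proposition. A second, minor point is checking that $\Phi$ is a morphism of varieties and not merely a map of sets. For this I would note that it is induced from the morphism $G^{\text{sc}}\times k\to G^{\text{sc}}/B^{\text{sc}}\times\mcal{L}(\lambda)$, which is invariant under the $B^{\text{sc}}$-action, and then use that $G^{\text{sc}}\times k\to G^{\text{sc}}\times_{B^{\text{sc}}}k$ is a categorical quotient (indeed a locally trivial $B^{\text{sc}}$-torsor).
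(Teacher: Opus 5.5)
Your proposal is correct and is essentially the paper's argument. For (1), the paper identifies $\psi^*\mcal{L}(\lambda)$ with $G^{\text{sc}}\times k$ modulo the $B^{\text{sc}}$-action twisted through $\phi|_{B^{\text{sc}}}$, uses $\lambda\circ\phi|_{B^{\text{sc}}}=\sigma(\lambda)$, and cites Iversen; you instead write the explicit map $[g,v]\mapsto(gB^{\text{sc}},[\phi(g),v])$ and check it is a fiberwise isomorphism. For (2), the paper's projection-formula computation with $\psi_*\mcal{O}\simeq\mcal{O}$ amounts to your observation that $\psi_*\psi^*\mcal{L}\simeq\mcal{L}$ when $\psi$ is an isomorphism.
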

	\begin{proof}
		\begin{enumerate}
			\item Consider the pullback diagram:
			\[\begin{tikzcd}
				\phi_{G^{\text{sc}}/B^{\text{sc}}}^*(\mcal{L}(\lambda)) \arrow[r] \arrow[d] & \mcal{L}(\lambda) \arrow[d] \\
				G^{\text{sc}}/B^{\text{sc}} \arrow[r,"\phi_{G^{\text{sc}}/B^{\text{sc}}}"] & G/B
			\end{tikzcd},\]
			where $\phi_{G^{\text{sc}}/B^{\text{sc}}}^*(\mcal{L}(\lambda))$ denotes the pullback of the line bundle $\mcal{L}(\lambda)\to G/B$ along $\phi_{G^{\text{sc}}/B^{\text{sc}}}$.
			We compute
			\begin{align*}	\phi_{G^{\text{sc}}/B^{\text{sc}}}^*(\mcal{L}(\lambda))&=\{(g'B^{\text{sc}},(g,v)B)\in (G^{\text{sc}}/B^{\text{sc}})\times \mcal{L}(\lambda)\mid \phi_{G^{\text{sc}}/B^{\text{sc}}}(g'B^{\text{sc}})=gB\}\\&\simeq   (G^{\text{sc}}\times k)/B^{\text{sc}},
			\end{align*}
			where the action of $B^{\text{sc}}$ on $G^{\text{sc}}\times k$ is given by
			\[(b')^{-1}\cdot (g',v):=(g'b',\phi|_{B^{\text{sc}}}(b')^{-1}\cdot v),\quad b'\in B^{\text{sc}},\text{ } g'\in G^{\text{sc}}, \text{ } v\in k.\]
			By \cref{prop:isogeny-theorem}, we have \[\phi|_{B^{\text{sc}}}(b')^{-1}\cdot v=\lambda(\phi|_{B^{\text{sc}}}(b'))v=(\sigma(\lambda)(b'))v.\]
			It follows from \cite[Proposition 1.5]{I76} that
			\[\phi_{G^{\text{sc}}/B^{\text{sc}}}^*(\mcal{L}(\lambda))\simeq \mcal{L}(\sigma(\lambda)).\]
			\item As $\phi_{G^{\text{sc}}/B^{\text{sc}}}$ is an isomorphism, it follows that the pushforward $(\phi_{G^{\text{sc}}/B^{\text{sc}}})_*(\mcal{O}_{G^{\text{sc}}/B^{\text{sc}}})$ of the structure sheaf of $G^{\text{sc}}/B^{\text{sc}}$ is isomorphic to the structure sheaf $\mcal{O}_{G/B}$ of $G/B$. The projection formula now implies the result:
			\begin{align*}
				(\phi_{G^{\text{sc}}/B^{\text{sc}}})_*((\phi_{G^{\text{sc}}/B^{\text{sc}}})^*(\mcal{L}(\lambda)))&\simeq
				(\phi_{G^{\text{sc}}/B^{\text{sc}}})_*((\phi_{G^{\text{sc}}/B^{\text{sc}}})^*(\mcal{L}(\lambda))\otimes_{\mcal{O}_{G^{\text{sc}}/B^{\text{sc}}}} \mcal{O}_{G^{\text{sc}}/B^{\text{sc}}})\\&\simeq\mcal{L}(\lambda)\otimes_{\mcal{O}_{G/B}} ((\phi_{G^{\text{sc}}/B^{\text{sc}}})_*(\mcal{O}_{G^{\text{sc}}/B^{\text{sc}}}))
				\\&\simeq\mcal{L}(\lambda)\otimes_{\mcal{O}_{G/B}} \mcal{O}_{G/B}
				\\&\simeq\mcal{L}(\lambda).
			\end{align*}
		\end{enumerate}  
	\end{proof}

	\section{Oriented cohomology theories}\label{section:oriented-cohomology-theories}
	In this section, we will recall the notion of an oriented cohomology theory in the sense of Levine-Morel \cite{LM07}, explain the localization axiom of \cite{V19}, and review some facts from \cite{CPZ}. In this section, fix $k$, an algebraically closed field of characteristic $0$.
	
	Let $\textbf{Sch}_k$ be the category of separated schemes of finite type over $k$, and let $\textbf{Sm}_k$ be the full subcategory of $\textbf{Sch}_k$ consisting of schemes smooth and quasi-projective over $k$.
	An \textit{oriented cohomology theory} $\mrm{h}^*$ in the sense of \cite{LM07} is a contravariant functor from $\textbf{Sm}_k$ to the category of commutative, graded rings that satisfies certain `cohomological-type' axioms. Given an oriented cohomology theory $\mrm{h}^*$ and a vector bundle $E\to X$ of rank $n$ on a smooth variety $X$ over $k$, there is a set of Chern classes $c_i(E)\in\mrm{h}^i(X)$, $i\in\{0,\dotsc,n\}$. In particular, given two line bundles $\mcal{L}_1$ and $\mcal{L}_2$ on $X$, the first Chern classes satisfy Quillen's formula:
	\[c_1(\mcal{L}_1\otimes \mcal{L}_2)=c_1(\mcal{L}_1)+_F c_1(\mcal{L}_2),\]
	where $F$ is a one-dimensional commutative formal group law over $R=\mrm{h}^*(\mrm{Spec}(k))$. In particular, every oriented cohomology theory gives rise to a formal group law. As $k$ has characteristic $0$, there is a universal oriented cohomology theory called \textit{algebraic cobordism} $\Omega^*$ that satisfies the following universal property: for any oriented cohomology theory $\mrm{h}^*$, there is a unique morphism of oriented cohomology theories $\Omega^*\to\mrm{h}^*$ (\cite[Theorem 1.2.6]{LM07}). The formal group law corresponding to algebraic cobordism is the universal formal group law over the Lazard ring $R=\mathbb{L}$.
	
	\begin{egg}
		The Chow theory $\mrm{CH}^*$, which sends $X\in\textbf{Sm}_k$ to the Chow ring $\mrm{CH}^*(X)$, is an oriented cohomology theory, and its formal group law is the additive formal group law over $R=\mathbb{Z}$ (see \cref{ex:FGL}\ref{ex:additive:FGL}).
	\end{egg}
	
	\begin{egg}
		The Grothendieck $K^0$ functor, which sends $X\in\textbf{Sm}_k$ to the Grothendieck ring $K^0(X)$ of vector bundles on $X$, defines an oriented cohomology theory $K^*(-):=K^0(-)\otimes\mathbb{Z}[\beta,\beta^{-1}]$, where $\mrm{deg}(\beta)=-1$, whose formal group law is the multiplicative-periodic formal group law over $R=\mathbb{Z}[\beta,\beta^{-1}]$ (see \cref{ex:FGL}\ref{ex:mult:FGL}). 
	\end{egg}

	An oriented cohomology theory of the form $\mrm{h}^*=\Omega^*\otimes_\mathbb{L}R$, where the ring homomorphism $\mathbb{L}\to R$ defining the $\mathbb{L}$-module structure on the commutative ring $R$ is induced by a formal group law $F$ over $R$ (\cref{ex:FGL}\ref{itm:U-FGL}), is called a \textit{free oriented cohomology theory}. The Chow theory and $K^0(-)\otimes \mathbb{Z}[\beta,\beta^{-1}]$ are both examples of free oriented cohomology theories (\cite[Theorem 1.2.18 and Theorem 1.2.19]{LM07}).

	We will now explain the \textit{localization axiom}, which is a restrictive axiom that one can impose on an oriented cohomology theory. The Chow theory, the functor $K^0(-)\otimes \mathbb{Z}[\beta,\beta^{-1}]$, and algebraic cobordism all satisfy the localization axiom.
	
	\begin{defn}(\cite[Def. 2.1(EXCI)]{V19})\label{defn:localization}
		Suppose that $\mrm{h}^*$ is an oriented cohomology theory such that, for any $X\in\textbf{Sm}_k$ with closed subscheme $i\colon Z\to X$ and open complement $j\colon U\to X$, one has an exact sequence:
		\[\begin{tikzcd}
			\mrm{h}_*(Z)\arrow[r,"i_*"] &\mrm{h}_*(X)\arrow[r,"j^*"] &\mrm{h}_*(U)\arrow[r] & 0,
		\end{tikzcd}\]
		where, for any equidimensional $Y\in \textbf{Sm}_k$, we define $\mrm{h}_*(Y):=\mrm{h}^{\mrm{dim}_k(Y)-*}(Y)$, and for any quasi-projective scheme $Y\in \textbf{Sch}_k$ which is not assumed to be smooth, we define $\mrm{h}_*(Y):=\mrm{colim}_{V\to Y} \mrm{h}_*(V)$, where $V\to Y$ are projective morphisms from schemes $V\in \textbf{Sm}_k$ and where the transition maps in the colimit are push-forward maps. In this case, we say that $\mrm{h}^*$ satisfies the \textit{localization axiom}.
	\end{defn}

	Next, we briefly explain \cite[Assumption 13.2]{CPZ}.
	\begin{rmk}
		Let $G$ be a semisimple algebraic group over $k$, and suppose $T$ is a maximal torus sitting inside a Borel subgroup $B$ of $G$. Let $\Sigma$ be the root system of $T$, with Weyl group $W$, and fix a set of simple roots $\{\alpha_1,\dotsc,\alpha_n\}$ of $\Sigma$. Given a sequence of simple roots $I=(\alpha_{i_1},\dotsc,\alpha_{i_l})$, there is a smooth, projective variety $X_I$ called the \textit{Bott-Samelson} variety corresponding to $I$. Moreover, there is a morphism $q_I:X_I\to G/B$. If $\zeta_I:=(q_I)_*(1)\in\mrm{h}^*(G/B)$ is the push-forward of the fundamental class of the Bott-Samelson variety corresponding to $I$, then \cite[Assumption 13.2]{CPZ} says that, given a reduced sequence $I_w$ for each $w\in W$, the set $\{\zeta_{I_w}\}_{w\in W}$ is an $R$-basis of $\mrm{h}^*(G/B)$, where $R=\mrm{h}^*(\mrm{Spec}(k))$.
	\end{rmk}
	
	The following definition is taken from \cite[Def. 8.7]{CPZ}.
	\begin{defn}
		We say an oriented cohomology theory $\mrm{h}^*$ is \emph{weakly birationally invariant} if, for any proper birational morphism $f:Y\to X$ in $\textbf{Sm}_k$, the pushforward of the fundamental class $f_*(1_Y)$ is invertible.
	\end{defn}
	
	We will work under Assumption \ref{assumption:restrict-theory} for the rest of this paper, which is used for \cref{theo:Petrov} and \cref{theo:Gille}. In Example \ref{egg:examples}, we give examples of oriented cohomology theories $\mrm{h}^*$ that satisfy the criteria of Assumption \ref{assumption:restrict-theory}.
	\begin{assumption}\label{assumption:restrict-theory}
		$\mrm{h}^*$ is a weakly birationally invariant oriented cohomology theory satisfying \cite[Assumption 13.2]{CPZ} and the localization axiom of \cref{defn:localization}.
	\end{assumption}
	
	\begin{egg}{\textup{(see} \cite[Ex. 8.8 and Lem. 13.3]{CPZ}\textup{)}}\label{egg:examples}
		Examples of theories $\mrm{h}^*$ satisfying Assumption \ref{assumption:restrict-theory} include the Chow theory $\mrm{CH}^*$, the functor $K^*$, and algebraic cobordism $\mrm{\Omega}^*$.
	\end{egg}

	\section{Structure of oriented cohomology rings of semisimple algebraic groups}\label{section:Structure-of-oriented-cohomology-of-algebraic-groups} 
	
	In this section, we give a presentation for the oriented cohomology ring of a semisimple algebraic group in terms of formal Demazure operators, and we give an algorithm for computing these oriented cohomology rings. The main results are \cref{prop:cohomology-model}, \cref{lem:structure-of-cohomology}, and \cref{alg:only}.
	
	\begin{nota}
		We fix the following notation for the remainder of this paper.
		\begin{itemize}
			\item $k$ is an algebraically closed field of characteristic $0$.
			\item $\Psi=(\Sigma,\Lambda,\Sigma^\vee,\Lambda^\vee)$ is a semisimple root datum, with Weyl group $W$ and torsion index $\mathfrak{t}$.
			\item $\Delta=\{\alpha_i\}_{i=1}^n$ is a simple system for $\Sigma$, and $\{\lambda_i\}_{i=1}^n$ is a basis for $\Lambda$.
			\item $G$ is a semisimple algebraic group over $k$, with root datum $\Psi$.
			\item $B$ is a Borel subgroup of $G$, so that $G/B$ is a complete flag variety.
			\item $T$ is a maximal torus of $G$ sitting inside $B$, with character lattice $\Lambda$.
			\item $\phi:G^{\text{sc}}\to G$ is the simply-connected cover of $G$.
			\item $\Psi^{\text{sc}}=(\Sigma^{\text{sc}},\Lambda^{\text{sc}},(\Sigma^{\text{sc}})^\vee,(\Lambda^{\text{sc}})^\vee)$ is the root datum of $G^{\text{sc}}$, with Weyl group $W^\text{sc}$ and torsion index $\mathfrak{t}^\text{sc}$.
			\item $\sigma':\Psi\to \Psi^{\text{sc}}$ is the central isogeny of root data induced by $\phi$. 
			\item $\sigma:\Lambda\to \Lambda^\text{sc}$ is the injective group homomorphism on character lattices induced by $\sigma'$. By definition, $(\sigma(\alpha))^\vee(\sigma(\lambda))=\alpha^\vee(\lambda)$ for all $\alpha\in\Sigma$, $\lambda\in\Lambda$. 
			\item $B^\text{sc}$ is the Borel subgroup in $G^\text{sc}$ corresponding to $B$, and $T^\text{sc}$ is the maximal torus in $G^\text{sc}$ corresponding to $T$.
			\item $\mrm{h}^*$ is a weakly birationally invariant oriented cohomology theory satisfying Assumption \ref{assumption:restrict-theory}.
			\item $F(x,y)=x+y+\sum_{i,j\geq 1}a_{i,j}x^iy^j\in R\llbracket x,y\rrbracket$ is the one-dimensional commutative formal group law over $R=\mrm{h}^*(\mrm{Spec}(k))$ associated to $\mrm{h}^*$.
			\item The formal group algebras $R\llbracket\Lambda\rrbracket_F$ and $R\llbracket\Lambda^{\text{sc}}\rrbracket_F$ satisfy Assumption \ref{assumption:sigma-regular}.
			\item $\mcal{D}_F$ (resp. $\mcal{D}_F^\text{sc}$) is the algebra defined in Definition \ref{defn:endomorphisms} corresponding to $R\llbracket\Lambda\rrbracket_F$ (resp. $R\llbracket\Lambda^{\text{sc}}\rrbracket_F$).
			\item For each $w\in W$, $I_w=(\alpha_{i_1},\dotsc,\alpha_{i_k})$ is a reduced sequence of $w$.
			\item $\sigma(I_w):=(\sigma(\alpha_{i_1}),\dotsc,\sigma(\alpha_{i_k}))$ is the reduced sequence of the reflection $\sigma(w):=s_{\sigma(\alpha_{i_1})}\dotsb s_{\sigma(\alpha_{i_k})}\in W^\text{sc}$ corresponding to $I_w$.
		\end{itemize}
	\end{nota}
	
	The morphism $\sigma$ induces an injection of $R$-algebras $\sigma_*:R\llbracket\Lambda\rrbracket_F\to R\llbracket\Lambda^\text{sc}\rrbracket_F$ sending $x_\lambda\mapsto x_{\sigma(\lambda)}$. 
	
	\begin{lem}\label{lem:reflection-invariant}
		Let $s\in R\llbracket\Lambda\rrbracket_F$. Then 
		\[s_{\sigma(\alpha_i)}(\sigma_*(s))=\sigma_*(s_{\alpha_i}(s)).\]
	\end{lem}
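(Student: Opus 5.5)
We first check the identity on the generators $x_\lambda$ and then extend it to all of $R\llbracket\Lambda\rrbracket_F$ by a continuity argument.

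\textbf{Lattice-level identity.} For $\lambda\in\Lambda$ we have $s_{\alpha_i}(\lambda)=\lambda-\alpha_i^\vee(\lambda)\alpha_i$. Since $\sigma$ is a group homomorphism,
\[\sigma(s_{\alpha_i}(\lambda))=\sigma(\lambda)-\alpha_i^\vee(\lambda)\,\sigma(\alpha_i).\]
On the other side,
\[s_{\sigma(\alpha_i)}(\sigma(\lambda))=\sigma(\lambda)-(\sigma(\alpha_i))^\vee(\sigma(\lambda))\,\sigma(\alpha_i).\]
By the defining property of the central isogeny recorded in the Notation, $(\sigma(\alpha_i))^\vee(\sigma(\lambda))=\alpha_i^\vee(\lambda)$. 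Hence
\[s_{\sigma(\alpha_i)}\circ\sigma=\sigma\circ s_{\alpha_i}\quad\text{on }\Lambda.\]
This is the key step; it is immediate from the duality condition $\sigma^\vee((\sigma(\alpha))^\vee)=\alpha^\vee$.

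\textbf{Generators.} Next we compare the two maps $s_{\sigma(\alpha_i)}\circ\sigma_*$ and $\sigma_*\circ s_{\alpha_i}$, both from $R\llbracket\Lambda\rrbracket_F$ to $R\llbracket\Lambda^{\text{sc}}\rrbracket_F$. Each is a composite of $R$-algebra homomorphisms. The Weyl action is by $R$-algebra automorphisms, as it is induced by lattice automorphisms preserving the relations $x_{\lambda_1+\lambda_2}=x_{\lambda_1}+_Fx_{\lambda_2}$. The map $\sigma_*$ is induced similarly, since $\sigma$ is additive and so respects $\mcal{J}_F$.

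On a generator we compute
\[s_{\sigma(\alpha_i)}(\sigma_*(x_\lambda))=x_{s_{\sigma(\alpha_i)}(\sigma(\lambda))}=x_{\sigma(s_{\alpha_i}(\lambda))}=\sigma_*(s_{\alpha_i}(x_\lambda)),\]
where the middle equality is the lattice-level identity above. So the two maps agree on every $x_\lambda$.

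\textbf{Extension to all elements.} It remains to show that both maps are determined by their values on the $x_\lambda$. Both are continuous for the augmentation-ideal adic topologies: each sends $x_\lambda$ into $\mcal{I}_F$, so each maps $\mcal{I}_F$ into the corresponding augmentation ideal. Therefore both maps factor through the completion of $R[x_\Lambda]$ modulo $\mcal{J}_F$, and they agree on the image of the polynomial ring $R[x_\Lambda]$, which is dense in $R\llbracket\Lambda\rrbracket_F$. Because the target is complete Hausdorff, the two maps agree on every $s\in R\llbracket\Lambda\rrbracket_F$.

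Alternatively, one can use the basis isomorphism $R\llbracket\Lambda\rrbracket_F\simeq R\llbracket x_1,\dots,x_n\rrbracket$ and argue that a continuous $R$-algebra map out of a power series ring is determined by the images of the variables.

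The only genuine (though mild) obstacle is this continuity justification. The lattice computation itself is immediate.
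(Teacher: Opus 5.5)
Your proof is correct and follows essentially the same route as the paper: you verify the identity on the generators $x_\lambda$ using $(\sigma(\alpha_i))^\vee(\sigma(\lambda))=\alpha_i^\vee(\lambda)$, then extend to all of $R\llbracket\Lambda\rrbracket_F$ by multiplicativity and density. Your continuity argument is a cleaner version of the paper's ``induction on the degree of monomials'' plus density step: both composites are $R$-algebra maps sending $\mcal{I}_F$ into $\mcal{I}_F^{\text{sc}}$ (you wrote $\mcal{I}_F$, a harmless slip), so they agree on the dense image of $R[x_\Lambda]$ and hence everywhere, since the target is Hausdorff.
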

	\begin{proof}
		If $s=x_{\lambda}$ for some $\lambda\in\Lambda$, then
		\begin{align*}
			s_{\sigma(\alpha_i)}(\sigma_*(x_\lambda))=x_{\sigma(\lambda)-(\sigma(\alpha_i))^\vee(\sigma(\lambda))\sigma(\alpha_i)}=x_{\sigma(\lambda)-\alpha_i^\vee(\lambda)\sigma(\alpha_i)}=\sigma_*(s_{\alpha_i}(x_\lambda)).
		\end{align*}
		By induction on the degree of monomials, for $u,v\in R\llbracket\Lambda\rrbracket_F$ of degrees greater than $0$, we have
		\[s_{\sigma(\alpha_i)}(\sigma_*(uv))=(s_{\sigma(\alpha_i)}(\sigma_*(u)))(s_{\sigma(\alpha_i)}(\sigma_*(v)))=\sigma_*(s_{\alpha_i}(uv)).\]
		By density of $R\llbracket\Lambda\rrbracket_F$, for all $s\in R\llbracket\Lambda\rrbracket_F$, we have $s_{\sigma(\alpha_i)}(\sigma_*(s))=\sigma_*(s_{\alpha_i}(s))$.
	\end{proof}
	
	Recall the augmentation map $\epsilon: R\llbracket\Lambda\rrbracket_F\to R$, which sends $x_\lambda\mapsto 0$ for all $\lambda\in\Lambda$ and fixes $R$. 
	\begin{lem}\label{lem:sigma-invariant}
		Let $s\in R\llbracket\Lambda\rrbracket_F$. Then 
		\[\epsilon\Delta_{I_w}(s)=\epsilon\Delta_{\sigma(I_w)}(\sigma_*(s)).\]
	\end{lem}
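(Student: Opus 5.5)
The natural approach is to prove the stronger, augmentation-free identity
\[
\sigma_*\bigl(\Delta_{I}(s)\bigr)=\Delta_{\sigma(I)}\bigl(\sigma_*(s)\bigr)
\]
for every sequence $I$ of simple roots and every $s\in R\llbracket\Lambda\rrbracket_F$. We then apply the augmentations. Since $\sigma_*$ sends $x_\lambda\mapsto x_{\sigma(\lambda)}$ and fixes $R$, it commutes with the augmentation maps: $\epsilon^{\text{sc}}\circ\sigma_*=\epsilon$, where $\epsilon^{\text{sc}}$ is the augmentation of $R\llbracket\Lambda^{\text{sc}}\rrbracket_F$. This holds on the generators $x_\lambda$ and on $R$. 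It then holds everywhere by continuity, because $\sigma_*$ maps the augmentation ideal $\mcal{I}_F$ into the augmentation ideal of $R\llbracket\Lambda^{\text{sc}}\rrbracket_F$. Applying $\epsilon^{\text{sc}}$ to the stronger identity with $I=I_w$ gives the statement.

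The stronger identity reduces by induction on $l(I)$ to the case of one simple root. Write $I=(i_1,\dotsc,i_t)$, so that $\Delta_I=\Delta_{i_1}\circ\Delta_{(i_2,\dotsc,i_t)}$. It then suffices to show
\[
\sigma_*\bigl(\Delta_{\alpha_i}(u)\bigr)=\Delta_{\sigma(\alpha_i)}\bigl(\sigma_*(u)\bigr)\qquad\text{for all } u\in R\llbracket\Lambda\rrbracket_F.
\]
To prove this, set $v=\Delta_{\alpha_i}(u)$, so that $x_{\alpha_i}v=u-s_{\alpha_i}(u)$. Apply the ring homomorphism $\sigma_*$ and use \cref{lem:reflection-invariant}:
\[
x_{\sigma(\alpha_i)}\,\sigma_*(v)=\sigma_*(u)-\sigma_*\bigl(s_{\alpha_i}(u)\bigr)=\sigma_*(u)-s_{\sigma(\alpha_i)}\bigl(\sigma_*(u)\bigr).
\]
By \cref{lem:divisible} applied in $R\llbracket\Lambda^{\text{sc}}\rrbracket_F$, the right-hand side is uniquely divisible by $x_{\sigma(\alpha_i)}$, and the quotient is $\Delta_{\sigma(\alpha_i)}(\sigma_*(u))$. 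This uses Assumption \ref{assumption:sigma-regular} for $\Lambda^{\text{sc}}$: since $\sigma(\alpha_i)$ is a root of $\Sigma^{\text{sc}}$, the element $x_{\sigma(\alpha_i)}$ is regular. Uniqueness of the quotient then gives $\sigma_*(v)=\Delta_{\sigma(\alpha_i)}(\sigma_*(u))$, as required.

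The main point needing care is that $\sigma_*$ is a well-defined continuous ring homomorphism compatible with the structure used above. Concretely:
\begin{itemize}
\item $\sigma_*$ respects the relations $x_{\lambda_1+\lambda_2}=x_{\lambda_1}+_Fx_{\lambda_2}$, which holds because $\sigma$ is additive and the same formal group law $F$ is used on both sides.
\item $\sigma_*$ is continuous for the adic topologies, which holds because it maps $\mcal{I}_F$ into the augmentation ideal of the target.
\item $\sigma(\alpha_i)$ is a simple root of $\Sigma^{\text{sc}}$, which holds because $\sigma$ restricts to a bijection $\Sigma\to\Sigma^{\text{sc}}$ compatible with coroots, so the reflection $s_{\sigma(\alpha_i)}$ is defined.
\end{itemize}
The paper already uses these facts implicitly in stating \cref{lem:reflection-invariant}, so I would simply invoke that lemma.
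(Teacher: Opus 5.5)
Your proposal is correct and follows the paper's proof. The paper likewise uses \cref{lem:reflection-invariant} to get $\sigma_*\circ\Delta_{\alpha_i}=\Delta_{\sigma(\alpha_i)}\circ\sigma_*$ and then concludes from the fact that $\sigma_*$ fixes $R$; your version simply makes explicit what the paper leaves implicit, namely uniqueness of the quotient via regularity of $x_{\sigma(\alpha_i)}$, the induction on $l(I)$, and the compatibility $\epsilon^{\text{sc}}\circ\sigma_*=\epsilon$.
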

	\begin{proof}
		By Lemma \ref{lem:reflection-invariant}, we have
		\begin{align*}
			\Delta_{\sigma(\alpha_i)}(\sigma_*(s))&=\frac{\sigma_*(s)-s_{\sigma(\alpha_i)}(\sigma_*(s))}{x_{\sigma(\alpha_i)}}=\sigma_*\left(\frac{s-s_{\alpha_i}(s)}{x_{\alpha_i}}\right)=\sigma_*\left(\Delta_{\alpha_i}(s)\right).
		\end{align*}
		The result follows from the fact that $\sigma_*$ fixes $R$.
	\end{proof}

	The discussion from here up to Theorem \ref{theo:augmented-basis} closely follows \cite[\S 10]{CZZ1}.
	The augmentation map induces a map of $R$-modules $\epsilon_*:\mcal{D}_F\to \mrm{Hom}_R(R\llbracket\Lambda\rrbracket_F,R)$ given by $f\mapsto \epsilon\circ f$. Let $\epsilon\mcal{D}_F$ be the image of $\mcal{D}_F$ under the induced map. We can view $\epsilon\mcal{D}_F$ as an $R\llbracket\Lambda\rrbracket_F$-module, where the $R\llbracket\Lambda\rrbracket_F$-action is given by $s\cdot f:=\epsilon(s)f$ for all $s\in R\llbracket\Lambda\rrbracket_F$ and $f\in\epsilon\D_F$. There is an $R\llbracket\Lambda\rrbracket_F$-linear coproduct \[\triangle^\epsilon:\epsilon\D_F\to \epsilon\D_F\otimes_{R\llbracket\Lambda\rrbracket_F}\epsilon\D_F, \]
	such that 
	\[\triangle^\epsilon(f)(u\otimes v)=f(uv),\quad u,v\in R\llbracket\Lambda\rrbracket_F.\]
	The map $\epsilon_*:\D_F\to \epsilon\D_F$ is a morphism of $R\llbracket\Lambda\rrbracket_F$-coalgebras. Since Assumption \ref{assumption:sigma-regular} holds, Theorem \ref{theo:augmented-basis} below goes through:
	\begin{theo}(\cite[Thm. 5.4]{CPZ})\label{theo:augmented-basis}
		The augmentation $\epsilon\mcal{D}_F$ (resp. $\epsilon\mcal{D}_F^\text{\text{sc}}$) is free as a left $R$-module with basis $\{\epsilon\Delta_{I_w}\}_{w\in W}$ (resp. $\{\epsilon\Delta_{\sigma(I_w)}\}_{w\in W}$).
	\end{theo}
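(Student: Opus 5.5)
This statement is cited from \cite[Thm. 5.4]{CPZ}. The plan is to prove it by combining \cref{theo:basis-of-algebra-of-operators} (spanning) with a triangularity argument (independence). The simply-connected case is the same argument applied to $R\llbracket\Lambda^{\text{sc}}\rrbracket_F$, since $\sigma(I_w)$ is a reduced sequence for $\sigma(w)$ and $w\mapsto\sigma(w)$ is a bijection $W\to W^{\text{sc}}$.

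\textbf{Spanning.} By \cref{theo:basis-of-algebra-of-operators}, every $f\in\D_F$ can be written as $f=\sum_{w} q_w^*\circ\Delta_{I_w}$ with $q_w\in R\llbracket\Lambda\rrbracket_F$. Then
\[\epsilon\circ f=\sum_w \epsilon(q_w)\,\epsilon\Delta_{I_w},\]
because $\epsilon$ is a ring homomorphism, so $\epsilon(q_w u)=\epsilon(q_w)\epsilon(u)$. Hence $\{\epsilon\Delta_{I_w}\}_{w\in W}$ spans $\epsilon\D_F$ over $R$. This also matches the stated module structure $s\cdot f=\epsilon(s)f$.

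\textbf{Linear independence.} I would evaluate the functionals on suitable test elements. Fix a basis $\{\lambda_i\}$ of $\Lambda$, so that $R\llbracket\Lambda\rrbracket_F\simeq R\llbracket x_1,\dots,x_n\rrbracket$. Each $\Delta_\alpha$ lowers the $\mcal{I}_F$-adic degree by exactly one. Consequently $\epsilon\Delta_{I_w}$ vanishes on $\mcal{I}_F^{l(w)+1}$ and is determined on $\mcal{I}_F^{l(w)}/\mcal{I}_F^{l(w)+1}$ by its leading term. That leading term is the classical additive Demazure operator acting on $S^{l(w)}_R(\Lambda)$: modulo higher-order terms, $x_\lambda\equiv\lambda$ and $F$ is additive.

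Now suppose $\sum_w r_w\,\epsilon\Delta_{I_w}=0$ with not all $r_w=0$. Choose $m$ minimal such that some $r_w\neq 0$ with $l(w)=m$, and evaluate on degree-$m$ elements. The terms with $l(w)>m$ vanish on lowest-order terms, and the terms with $l(w)<m$ have $r_w=0$ by the choice of $m$. This reduces the problem to showing that the functionals $u\mapsto\Delta^{A}_{I_w}(u)$ on $S^m_R(\Lambda)$, for $l(w)=m$ and $\Delta^A$ the additive (Chow) Demazure operator, are $R$-linearly independent. Since these functionals need not vanish on $\mcal{I}_F^{m+1}$, the reduction should be organized by filtering the dual accordingly. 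More simply, the vanishing of the combination implies that its degree-$m$ part, where only the $l(w)=m$ terms contribute, is zero.

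\textbf{Main obstacle.} The hard step is the independence of the classical operators $\Delta^A_{I_w}$, $l(w)=m$, on $S^m_R(\Lambda)$ over a ring $R$ that is not a field. Over $\mathbb{Q}$ this is Demazure's theorem: the map $S^*(\Lambda_\mathbb{Q})\to\mathrm{CH}^*(G/B)_\mathbb{Q}$ is surjective and the $\Delta_{w}$ pair perfectly. Integrally, the image of the characteristic map has finite index governed by the torsion index $\mathfrak{t}$ \cite{D73}. So some $\mathfrak{t}^N\cdot(\text{dual Schubert class})$ is realized by an element $u_v\in S^{l(v)}(\Lambda)$ with $\Delta^A_{I_w}(u_v)=\mathfrak{t}^N\delta_{w,v}$ for $l(w)=l(v)$. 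Applying the relation to $u_v$ then gives $\mathfrak{t}^N r_v=0$, and regularity of $\mathfrak{t}$ in $R$ (\cref{assumption:sigma-regular}) yields $r_v=0$. This is where I expect the real work to lie: constructing such $u_v$ uniformly and checking compatibility with the formal group law corrections.
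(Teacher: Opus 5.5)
Your spanning argument is fine, but the independence argument breaks at the reduction step. You take $m$ \emph{minimal} and claim that the functionals $\epsilon\Delta_{I_w}$ with $l(w)>m$ vanish on degree-$m$ elements. That holds for the additive law, where $\Delta_{I_w}$ is homogeneous of degree $-l(w)$. For a general $F$, however, $\Delta_{I_w}$ only respects the filtration, $\Delta_{I_w}(\mcal{I}_F^j)\subseteq\mcal{I}_F^{j-l(w)}$. So $\epsilon\Delta_{I_w}$ kills $\mcal{I}_F^{l(w)+1}$, but it need not kill elements of \emph{lower} order.

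For instance, in type $A_2$ one has $\Delta_\beta(x_\alpha)=(x_\alpha-x_{\alpha+\beta})/x_\beta=-1-a_{11}x_\alpha+(\text{terms in }\mcal{I}_F^2)$ and $\epsilon\Delta_\alpha(x_\alpha)=2$. Hence $\epsilon\Delta_{(\alpha,\beta)}(x_\alpha)=-2a_{11}$, which is exactly the coefficient of $\Delta_{(\alpha,\beta)}^*$ in relation (a) of Case 2 of \cref{theo:A2}. This is a length-two functional that is nonzero on a degree-one coordinate. Consequently the degree-$m$ part of $\sum_w r_w\epsilon\Delta_{I_w}$ receives contributions from every $w$ with $l(w)\geq m$. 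Your fallback sentence (``only the $l(w)=m$ terms contribute'') is false for the same reason.

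The repair is to induct from the top. Take $m$ \emph{maximal} such that $r_w\neq 0$ for some $w$ with $l(w)=m$. On $\mcal{I}_F^m$ the terms with $l(w)<m$ vanish because $\mcal{I}_F^m\subseteq\mcal{I}_F^{l(w)+1}$, and the terms with $l(w)>m$ have $r_w=0$. What remains is $\sum_{l(w)=m}r_w\Delta^A_{I_w}=0$ on $\mcal{I}_F^m/\mcal{I}_F^{m+1}\simeq S^m_R(\Lambda)$.

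With the order reversed, your ``main obstacle'' disappears. All formal group law corrections have been discarded by passing to the associated graded, so the test elements are purely classical. By definition of the torsion index there is $u_0\in S^N_{\mathbb{Z}}(\Lambda)$, $N=l(w_0)$, with $\Delta^A_{w_0}(u_0)=\mathfrak{t}$. Put $u_v:=\Delta^A_{I_{v^{-1}w_0}}(u_0)\in S^{l(v)}_{\mathbb{Z}}(\Lambda)$.

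The additive operators satisfy $(\Delta^A_i)^2=0$ and the braid relations. Hence $\Delta^A_I$ depends only on $w(I)$ when $I$ is reduced, and vanishes otherwise. For $l(w)=l(v)$, the concatenation $(I_w,I_{v^{-1}w_0})$ has length $N$ and is reduced iff $wv^{-1}w_0=w_0$. Therefore $\Delta^A_{I_w}(u_v)=\mathfrak{t}\,\delta_{w,v}$, and no power $\mathfrak{t}^N$ is needed.

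Evaluating the relation on a lift of $u_v$ to $\mcal{I}_F^m$ gives $\mathfrak{t}r_v=0$. Regularity of $\mathfrak{t}$ then gives $r_v=0$, contradicting the choice of $m$. For $\epsilon\D_F^{\text{sc}}$ you need $\mathfrak{t}^{\text{sc}}$ regular, which is among the standing assumptions. Note that the paper itself gives no argument here beyond citing \cite[Thm. 5.4]{CPZ}.
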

	
	Since $\epsilon_*$ is a morphism of $R\llbracket\Lambda\rrbracket_F$-coalgebras, we can compute the coproduct $\triangle^\epsilon$ in the basis $\{\epsilon\Delta_{I_w}\}_{w\in W}$ using Equation (\ref{eq:coproduct-coefficients}). 
	\begin{rmk}\label{rmk:iso-augmentation}
		By Lemma \ref{lem:reflection-invariant} and Lemma \ref{lem:sigma-invariant}, and in the notation of Equation (\ref{eq:coproduct-coefficients}), we have that 
		\[\epsilon(q_{I_w,I_{w'}}^{I_v})=\epsilon\left(q_{\sigma(I_w),\sigma(I_{w'})}^{\sigma(I_v)}\right).\] 
		Thus, by Theorem \ref{theo:augmented-basis}, $\sigma$ induces an isomorphism of $R$-coalgebras,
		\[\epsilon\D_F\to \epsilon\D_F^\text{sc},\quad \epsilon\Delta_{I_w}\mapsto  \epsilon\Delta_{\sigma(I_w)}.\] 
		Therefore, the map 
		\[(\epsilon\D_F)^*\to  (\epsilon\D_F^\text{sc})^*,\quad (\epsilon\Delta_{I_w})^*\mapsto   \left(\epsilon\Delta_{\sigma(I_w)}\right)^*\] 
		yields an isomorphism of $R$-algebras.
	\end{rmk}
	We recall three different maps of $R$-algebras:
	\begin{itemize}
		\item  $\xi:\D_F^*\to(\epsilon\D_F)^*$ (or $\xi^\text{sc}:(\D_F^\text{sc})^*\to(\epsilon\D_F^\text{sc})^*$) is the \textit{surjective} map of $R$-algebras, defined in \cite[Lemma 11.2]{CZZ1}, which sends $f$ to the map $\epsilon d\mapsto \epsilon f(d)$.
		\item  $c_{R\llbracket\Lambda\rrbracket_F}\colon R\llbracket\Lambda\rrbracket_F\to\D_F^*$ (resp. $c_{R\llbracket\Lambda^\text{sc}\rrbracket_F}\colon R\llbracket\Lambda^\text{sc}\rrbracket_F\to(\D_F^\text{sc})^*$) is the $R$-algebra map, defined in \cite[pp. 1215]{CZZ1}, which sends $s$ to $\mrm{ev}_s:=\sum_{w\in W}\Delta_{I_w}(s)\Delta_{I_w}^*$ (resp. $\mrm{ev}_s:=\sum_{w\in W}\Delta_{\sigma(I_w)}(s)\Delta_{\sigma(I_w)}^*$).
		\item   $c_R:R\llbracket\Lambda\rrbracket_F\to(\epsilon\D_F)^*$ (resp. $c_R^\text{sc}:R\llbracket\Lambda^\text{sc}\rrbracket_F\to(\epsilon\D_F^\text{sc})^*$) is the $R$-algebra map, defined in  \cite[Def. 6.1]{CPZ}, which sends $s$ to $\sum_{w\in W}\epsilon\Delta_{I_w}(s)(\epsilon\Delta_{I_w})^*$ (resp. $\sum_{w\in W}\epsilon\Delta_{\sigma(I_w)}(s)(\epsilon\Delta_{\sigma(I_w)})^*$).
	\end{itemize}
	
	These three maps are related by $c_R=\xi\circ c_{R\llbracket\Lambda\rrbracket_F}$ (resp. $c_R^{\text{sc}}=\xi^{\text{sc}}\circ c_{R\llbracket\Lambda^{\text{sc}}\rrbracket_F}$).
	
	\begin{rmk}\label{rmk:7.6}
		Recall the isomorphism of varieties $\phi_{G^{\text{sc}}/B^{\text{sc}}}\colon G^{\text{sc}}/B^{\text{sc}}\to G/B$ described in \cref{section:line-bundles}. This isomorphism of varieties induces an isomorphism of $R$-algebras $\phi_{G^{\text{sc}}/B^{\text{sc}}}^*\colon \mrm{h}^*(G/B)\to \mrm{h}^*(G^{\text{sc}}/B^{\text{sc}})$. Let $\lambda\in\Lambda$, and recall the line bundle $\mcal{L}(\lambda)$ on $G/B$. Let $c_1^{\mrm{h}^*}$ be the first Chern class with respect to $\mrm{h}^*$. By construction, the first Chern class $c_1^{\mrm{h}^*}$ commutes with pullbacks, so \cref{lem:isomorphic-line-bundles0} implies that
		\[\phi_{G^{\text{sc}}/B^{\text{sc}}}^*(c_1^{\mrm{h}^*}(\mcal{L}(\lambda)))=c_1^{\mrm{h}^*}(\phi_{G^{\text{sc}}/B^{\text{sc}}}^*(\mcal{L}(\lambda)))=c_1^{\mrm{h}^*}(\mcal{L}(\sigma(\lambda))).\]
		In other words, the elements $c_1^{\mrm{h}^*}(\mcal{L}(\lambda))\in \mrm{h}^*(G/B)$ and $c_1^{\mrm{h}^*}(\mcal{L}(\sigma(\lambda)))\in \mrm{h}^*(G^{\text{sc}}/B^{\text{sc}})$ are identified by the isomorphism $\phi_{G^{\text{sc}}/B^{\text{sc}}}^*$.
	\end{rmk}
	
	By \cite[Def. 10.2]{CPZ}, there is an $R$-algebra homomorphism called the \textit{characteristic map},
	\[\mathfrak{c}_{G^\text{sc}/B^\text{sc}}:R\llbracket\Lambda^\text{sc}\rrbracket_F\to \mrm{h}^*(G^\text{sc}/B^{\text{sc}}),\quad x_{\lambda^\text{sc}}\mapsto \mrm{c}_1^{\mrm{h}^*}(\mcal{L}(\lambda^\text{sc})),\quad \text{for all $\lambda^\text{sc}\in\Lambda^\text{sc}$},\]
	where $\mrm{c}_1^{\mrm{h}^*}(\mcal{L}(\lambda^\text{sc}))$ is the first Chern class of $\mcal{L}(\lambda^\text{sc})$ with respect to $\mrm{h}^*$.
	This gives an $R$-algebra homomorphism
	\[\mathfrak{c}_{G/B}:R\llbracket\Lambda\rrbracket_F\to R\llbracket\Lambda^\text{sc}\rrbracket_F\to \mrm{h}^*(G^\text{sc}/B^{\text{sc}}),\quad x_\lambda\mapsto \mrm{c}_1^{\mrm{h}^*}(\mcal{L}(\sigma(\lambda))),\]
	which we will also call a \textit{characteristic map}. By definition, $\mathfrak{c}_{G/B}=\mathfrak{c}_{G^\text{sc}/B^\text{sc}}\circ \sigma_*$.

	\begin{theo}\label{theo:Petrov}
		There is an $R$-algebra isomorphism $\theta:(\epsilon\D_F^\text{sc})^*\to \mrm{h}^*(G^\text{sc}/B^{\text{sc}})$ such that $\mathfrak{c}_{G/B}=\theta\circ c_R^\text{sc}\circ \sigma_*$. 
	\end{theo}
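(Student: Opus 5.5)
This theorem should follow essentially from the main comparison theorem of Calm\`es--Petrov--Zainoulline, applied to the simply-connected group $G^{\text{sc}}$, together with the identity $\mathfrak{c}_{G/B}=\mathfrak{c}_{G^\text{sc}/B^\text{sc}}\circ\sigma_*$ recorded just before the statement. Concretely, I will first verify that the hypotheses of \cite[Thm. 13.12]{CPZ} (or its equivalent formulation in \cite[\S 13]{CPZ}) hold for $G^{\text{sc}}$ and $\mrm{h}^*$. We need three things: $\mrm{h}^*$ is weakly birationally invariant and satisfies \cite[Assumption 13.2]{CPZ}, which is \cref{assumption:restrict-theory}; $R\llbracket\Lambda^\text{sc}\rrbracket_F$ is $\Sigma$-regular; and the torsion index $\mathfrak{t}^\text{sc}$ is regular in $R$. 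The last two are exactly \cref{assumption:sigma-regular} for $R\llbracket\Lambda^{\text{sc}}\rrbracket_F$, imposed in the Notation.

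Under these hypotheses, \cite{CPZ} gives an $R$-algebra isomorphism $\theta\colon(\epsilon\D_F^\text{sc})^*\to\mrm{h}^*(G^\text{sc}/B^\text{sc})$. It is obtained by dualizing the map sending $\epsilon\Delta_{\sigma(I_w)}$ to the Bott--Samelson class data; equivalently, $\theta$ identifies the dual basis $(\epsilon\Delta_{\sigma(I_w)})^*$ with the Bott--Samelson basis $\zeta_{\sigma(I_w)}$ up to the standard normalization. The key compatibility proved there is $\mathfrak{c}_{G^\text{sc}/B^\text{sc}}=\theta\circ c_R^{\text{sc}}$. This holds because both sides are $R$-algebra maps, and the push--pull formula $q_I^*$/Demazure formula $\pi^*\pi_*(\mathfrak c(u))=\mathfrak c(\Delta_\alpha(u))$ shows that pairing $\mathfrak c_{G^\text{sc}/B^\text{sc}}(u)$ against $\zeta_{I}$ yields $\epsilon\Delta_I(u)$. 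Since $\{\epsilon\Delta_{\sigma(I_w)}\}$ is an $R$-basis by \cref{theo:augmented-basis}, this determines $\theta\circ c_R^\text{sc}$.

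Finally, I precompose with $\sigma_*$:
\[\theta\circ c_R^\text{sc}\circ\sigma_*=\mathfrak{c}_{G^\text{sc}/B^\text{sc}}\circ\sigma_*=\mathfrak{c}_{G/B},\]
where the last equality is the definition of $\mathfrak{c}_{G/B}$. As a sanity check on generators, note that $x_\lambda\mapsto x_{\sigma(\lambda)}\mapsto c_1(\mcal{L}(\sigma(\lambda)))$. By \cref{rmk:7.6} this is the image of $c_1(\mcal{L}(\lambda))$ under $\phi^*_{G^\text{sc}/B^\text{sc}}$, which is consistent.

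The main obstacle is the second step. We must make sure that the version of the CPZ theorem cited really applies with our conventions: the sign of the character in $\mcal{L}(\lambda)$ (here $B$ acts via $\lambda^{-1}$), and the augmentation and dual-basis normalizations. If the conventions differ by $\lambda\mapsto-\lambda$, I would absorb this into $\theta$ by composing with the automorphism of $(\epsilon\D^\text{sc}_F)^*$ induced by $x_\lambda\mapsto x_{-\lambda}$. Apart from that, the argument is a direct citation plus the formal identity above.
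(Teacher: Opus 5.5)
Your proposal is correct and matches the paper's proof. The paper simply invokes \cite[Theorem 13.13]{CPZ} for $G^{\text{sc}}$ to obtain $\theta$ with $\mathfrak{c}_{G^\text{sc}/B^\text{sc}}=\theta\circ c_R^\text{sc}$, and then precomposes with $\sigma_*$; your hypothesis check and your discussion of sign conventions are extra. Your aside identifying $(\epsilon\Delta_{\sigma(I_w)})^*$ with the Bott--Samelson class $\zeta_{\sigma(I_w)}$ is inaccurate, since under $\theta$ the dual basis corresponds to the Poincar\'e-dual basis (for instance $(\epsilon\mathbf{1})^*=c_R^\text{sc}(1)\mapsto 1$ rather than to the point class), but this plays no role in the argument.
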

	\begin{proof}
		By \cite[Theorem 13.13]{CPZ}, there exists an $R$-algebra isomorphism $\theta:(\epsilon\D_F^\text{sc})^*\to \mrm{h}^*(G^\text{sc}/B^\text{sc})$ such that $\mathfrak{c}_{G^\text{sc}/B^\text{sc}}=\theta\circ c_R^\text{sc}$. The result follows by composing both sides on the right by $\sigma_*$.
	\end{proof}

	The proof of the following lemma uses \cite[Proposition 5.1]{GZ12}, which requires that $\mrm{h}^*$ satisfies the localization axiom.
	
	\begin{prop}\label{theo:Gille}
		There are $R$-algebra isomorphisms
		\begin{align*}
			\mrm{h}^*(G)&\simeq\mrm{h}^*(G/B)/(c_1^{\mrm{h}^*}(\mcal{L}(\lambda_1)),\dotsc,c_1^{\mrm{h}^*}(\mcal{L}(\lambda_n)))\\&\simeq  \mrm{h}^*(G^{\text{sc}}/B^{\text{sc}})/(c_1^{\mrm{h}^*}(\mcal{L}(\sigma(\lambda_1))),\dotsc,c_1^{\mrm{h}^*}(\mcal{L}(\sigma(\lambda_n)))).
		\end{align*}
	\end{prop}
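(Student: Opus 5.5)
We would prove the two isomorphisms in turn. The first is essentially the Gille--Zainoulline result (\ref{GZ}), i.e.\ \cite[Proposition 5.1 / Theorem 5.1]{GZ12}. It gives
\[\mrm{h}^*(G)\simeq \mrm{h}^*(G/B)/(\mrm{im}(\mrm{c}_1|_{\mcal{L}_B})),\]
where $\mcal{L}_B$ is the set of $B$-equivariant (homogeneous) line bundles $\mcal{L}(\lambda)$, $\lambda\in\Lambda$. This is where the localization axiom from Assumption \ref{assumption:restrict-theory} enters. It then remains to see that the ideal generated by all $c_1^{\mrm{h}^*}(\mcal{L}(\lambda))$, $\lambda\in\Lambda$, equals the ideal generated by $c_1^{\mrm{h}^*}(\mcal{L}(\lambda_1)),\dotsc,c_1^{\mrm{h}^*}(\mcal{L}(\lambda_n))$ for a $\mathbb{Z}$-basis $\{\lambda_i\}$ of $\Lambda$.

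For this reduction we use that $\lambda\mapsto\mcal{L}(\lambda)$ is a group homomorphism, i.e.\ $\mcal{L}(\lambda+\mu)\simeq\mcal{L}(\lambda)\otimes\mcal{L}(\mu)$ and $\mcal{L}(-\lambda)\simeq\mcal{L}(\lambda)^\vee$; both are clear from the construction $G\times_B V_\lambda$. Write $\lambda=\sum m_i\lambda_i$. Quillen's formula gives
\[c_1(\mcal{L}(\lambda))=m_1\cdot_F c_1(\mcal{L}(\lambda_1))+_F\dotsb+_F m_n\cdot_F c_1(\mcal{L}(\lambda_n)).\]
Since $F(u,v)=u+v+uv(\cdots)$ and $-_Fu=u\cdot(\text{power series})$, every such formal combination lies in the ideal generated by the $c_1(\mcal{L}(\lambda_i))$. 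Chern classes are nilpotent on the smooth projective variety $G/B$, so these power series are actually finite sums. The reverse inclusion is trivial, which gives the first isomorphism.

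For the second isomorphism we transport along the isomorphism $\phi_{G^{\text{sc}}/B^{\text{sc}}}^*\colon\mrm{h}^*(G/B)\to\mrm{h}^*(G^{\text{sc}}/B^{\text{sc}})$ of $R$-algebras induced by the isomorphism of flag varieties. By \cref{rmk:7.6}, which rests on \cref{lem:isomorphic-line-bundles0}, this map sends $c_1^{\mrm{h}^*}(\mcal{L}(\lambda_i))$ to $c_1^{\mrm{h}^*}(\mcal{L}(\sigma(\lambda_i)))$. A ring isomorphism carries the ideal generated by a set onto the ideal generated by its image, so it descends to an isomorphism of the quotients.

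The main obstacle is the first step. We must check that the hypotheses of \cite{GZ12} (a split semisimple $G$ over an algebraically closed field of characteristic $0$, and $\mrm{h}^*$ satisfying localization) are met. We must also identify their ``$B$-equivariant line bundles'' with the homogeneous bundles $\mcal{L}(\lambda)$ for characters $\lambda\in\Lambda$ of $T$, lifted to $B$ via Theorem \ref{theo:Borel-subgroup-factors}. The formal-group-law ideal argument is routine once nilpotence of $c_1$ is noted.
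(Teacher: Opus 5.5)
Your proposal is correct and follows the paper's route: the first isomorphism comes from \cite[Proposition 5.1]{GZ12}, and the second comes from transporting along the isomorphism $\phi_{G^{\text{sc}}/B^{\text{sc}}}^*$ using \cref{lem:isomorphic-line-bundles0} and \cref{rmk:7.6}. Your explicit reduction via Quillen's formula, from the ideal generated by all $c_1^{\mrm{h}^*}(\mcal{L}(\lambda))$ with $\lambda\in\Lambda$ to the ideal generated by those for a $\mathbb{Z}$-basis, is a sound filling-in of a step the paper leaves implicit.
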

	\begin{proof}
		This follows from \cite[Proposition 5.1]{GZ12} and \cref{lem:isomorphic-line-bundles0}. See also \cref{rmk:7.6}.
	\end{proof}

	Let $\mcal{I}_F$ (resp. $\mcal{I}_F^\text{sc}$) be the kernel of the augmentation map $\epsilon:R\llbracket\Lambda\rrbracket_F\to R$ (resp. $\epsilon:R\llbracket\Lambda^\text{sc}\rrbracket_F\to R$).
	Denote by  $\mcal{I}_F\D_F^*$ (resp. $\mcal{I}_F^\text{sc}(\D_F^\text{sc})^*$) the ideal in $\D_F^*$ (resp. $(\D_F^\text{sc})^*$)  generated by multiplication by elements in $\mcal{I}_F$ (resp. $\mcal{I}_F^\text{sc}$). Let $\mcal{C}_{R\llbracket\Lambda\rrbracket_F}$ be the ideal in $\D_F^*$ generated by the image of the restriction $c_{R\llbracket\Lambda\rrbracket_F}|_{\mcal{I}_F}$ of $c_{R\llbracket\Lambda\rrbracket_F}$ to $\mcal{I}_F$.
	
	\begin{theo}{\label{prop:cohomology-model}}
		There is an isomorphism of $R$-algebras 
		\[\mrm{h}^*(G)\simeq \D_F^*/(\mcal{C}_{R\llbracket\Lambda\rrbracket_F}+\mcal{I}_F\D_F^*).\]
	\end{theo}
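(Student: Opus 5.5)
The plan is to chain together the isomorphisms already assembled in this section. First, by \cref{theo:Gille}, $\mrm{h}^*(G)$ is the quotient of $\mrm{h}^*(G^{\text{sc}}/B^{\text{sc}})$ by the ideal generated by the classes $c_1^{\mrm{h}^*}(\mcal{L}(\sigma(\lambda_i)))$ for $i=1,\dotsc,n$. Since $\mathfrak{c}_{G/B}(x_{\lambda_i})=c_1^{\mrm{h}^*}(\mcal{L}(\sigma(\lambda_i)))$, this ideal is the ideal generated by $\mathfrak{c}_{G/B}(\mcal{I}_F)$. Here I use two facts: $\mcal{I}_F$ is topologically generated as an ideal by $x_{\lambda_1},\dotsc,x_{\lambda_n}$ (via $R\llbracket\Lambda\rrbracket_F\simeq R\llbracket x_1,\dotsc,x_n\rrbracket$), and $\mathfrak{c}_{G/B}$ is a ring map whose image of $\mcal{I}_F$ is nilpotent in $\mrm{h}^*(G/B)$, so power series cause no trouble.

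Next, pull this back through $\theta$ from \cref{theo:Petrov} and through the algebra isomorphism $(\epsilon\D_F)^*\simeq(\epsilon\D_F^{\text{sc}})^*$ of \cref{rmk:iso-augmentation}. I need to check that this isomorphism intertwines $c_R$ with $c_R^{\text{sc}}\circ\sigma_*$. This follows from \cref{lem:sigma-invariant}, since $\epsilon\Delta_{I_w}(s)=\epsilon\Delta_{\sigma(I_w)}(\sigma_*(s))$ term by term in the defining sums. Together these give
\[\mrm{h}^*(G)\simeq(\epsilon\D_F)^*/(c_R(\mcal{I}_F)).\]

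Finally, lift to $\D_F^*$ using the surjection $\xi\colon\D_F^*\to(\epsilon\D_F)^*$ and the relation $c_R=\xi\circ c_{R\llbracket\Lambda\rrbracket_F}$. Then $\xi(\mcal{C}_{R\llbracket\Lambda\rrbracket_F})$ is exactly the ideal generated by $c_R(\mcal{I}_F)$, because $\xi$ is a surjective algebra map. It therefore remains to show that $\ker\xi=\mcal{I}_F\D_F^*$. The inclusion $\mcal{I}_F\D_F^*\subseteq\ker\xi$ holds because $\xi(sf)(\epsilon d)=\epsilon(s)\epsilon f(d)=0$ for $s\in\mcal{I}_F$. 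For the reverse inclusion, write $f=\sum_w f_w\Delta_{I_w}^*$ with $f_w\in R\llbracket\Lambda\rrbracket_F$, using the basis from \cref{theo:basis-of-algebra-of-operators}. Then $\xi(f)$ sends $\epsilon\Delta_{I_v}$ to $\epsilon(f_v)$. Since $\{\epsilon\Delta_{I_w}\}$ is an $R$-basis by \cref{theo:augmented-basis}, $\xi(f)=0$ forces $\epsilon(f_w)=0$ for all $w$, so $f_w\in\mcal{I}_F$ and $f\in\mcal{I}_F\D_F^*$. Combining the three steps via the third isomorphism theorem gives
\[\mrm{h}^*(G)\simeq\D_F^*/(\mcal{C}_{R\llbracket\Lambda\rrbracket_F}+\mcal{I}_F\D_F^*).\]

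I expect the main obstacle to be the compatibility bookkeeping in the second step: making sure the coalgebra isomorphism of \cref{rmk:iso-augmentation} genuinely carries $c_R$ to $c_R^{\text{sc}}\circ\sigma_*$, and that the ideal generated by the $n$ Chern classes coincides with the ideal generated by the full image of $\mcal{I}_F$. I would handle the latter by noting that every element of $\mcal{I}_F$ is $\sum_i x_{\lambda_i}g_i$ for some $g_i\in R\llbracket\Lambda\rrbracket_F$, and that the $R$-algebra maps involved respect this decomposition.
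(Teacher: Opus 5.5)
Your proposal is correct and follows essentially the same route as the paper: the same computation $\ker\xi=\mcal{I}_F\D_F^*$ via the bases $\{\Delta_{I_w}\}$ and $\{\epsilon\Delta_{I_w}\}$, the same use of \cref{lem:sigma-invariant} to match $c_R$ with $c_R^{\text{sc}}\circ\sigma_*$, and then \cref{theo:Petrov} and \cref{theo:Gille}, assembled in the reverse order (you quotient in $(\epsilon\D_F)^*$ and lift along $\xi$ at the end, whereas the paper first passes to $\D_F^*/\mcal{I}_F\D_F^*$). Your explicit argument that the ideal generated by the classes $c_1^{\mrm{h}^*}(\mcal{L}(\sigma(\lambda_i)))$ equals the ideal generated by $\mathfrak{c}_{G/B}(\mcal{I}_F)$, using $\mcal{I}_F=\sum_i x_{\lambda_i}R\llbracket\Lambda\rrbracket_F$, fills in a step that the paper only asserts.
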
 
	\begin{proof}
		Using Theorem \ref{theo:basis-of-algebra-of-operators} and Theorem \ref{theo:augmented-basis}, we can verify that the kernel of the surjective map $\xi\colon\D_F^*\to(\epsilon \D_F)^*$ is $\mcal{I}_F\D_F^*$. Suppose 
		\[\xi\left(\sum_{w\in W}a_w\Delta_{I_w}^*\right)=0,\quad a_w\in R\llbracket\Lambda\rrbracket_F.\]
		Then, for all $v\in W$, we have
		\[0=\left(\xi\left(\sum_{w\in W}a_w\Delta_{I_w}^*\right)\right)\left(\epsilon\Delta_{I_v}\right)=\epsilon\left(\left(\sum_{w\in W}a_w\Delta_{I_w}^*\right)\left(\Delta_{I_v}\right)\right)=\epsilon(a_v).\]
		Since $\mrm{ker}(\epsilon)=\mcal{I}_F$, we have $\mrm{ker}(\xi)=\mathcal{I}_F\D_F^*$. Hence, the induced map $\bar{\xi}:\D_F^*/\mcal{I}_F\D_F^*\to (\epsilon \D_F)^*$ is an $R$-algebra isomorphism. 
		
		Define $\bar{c}_{R\llbracket\Lambda\rrbracket_F}:=\pi\circ c_{R\llbracket\Lambda\rrbracket_F}: R\llbracket\Lambda\rrbracket_F\to \D_F^*\to\D_F^*/\mcal{I}_F\D_F^*$, where $\pi:\D_F^*\to\D_F^*/\mcal{I}_F\D_F^*$ is the canonical projection. 
		Given $s\in R\llbracket\Lambda\rrbracket_F$, we see that
		\begin{align*}(c_R(s))(\epsilon\Delta_{I_v})&=\left(\sum_{w\in W}\epsilon\Delta_{I_w}(s)(\epsilon\Delta_{I_w})^*\right)(\epsilon\Delta_{I_v})\\&=\epsilon\Delta_{I_v}(s)=\bar{\xi}\left(\sum_{w\in W}\Delta_{I_w}(s)\Delta_{I_w}^*\right)(\epsilon\Delta_{I_v})=((\bar{\xi}\circ\bar{c}_{R\llbracket\Lambda\rrbracket_F})(s))(\epsilon\Delta_{I_v}).
		\end{align*}
		Therefore, $c_R=\bar{\xi}\circ\bar{c}_{R\llbracket\Lambda\rrbracket_F}$. By analogous reasoning, we have  $c_R^\text{sc}=\bar{\xi}^\text{sc}\circ\bar{c}_{R\llbracket\Lambda^\text{sc}\rrbracket_F}$

		The $R$-algebra isomorphism
		\[(\epsilon \D_F)^*\to (\epsilon \D_F^\text{sc})^*\]
		of Remark \ref{rmk:iso-augmentation} induces an $R$-algebra isomorphism
		\[\D_F^*/\mcal{I}_F\D_F^*\to(\D_F^\text{sc})^*/\mcal{I}_F^\text{sc}(\D_F^\text{sc})^* ,\quad \Delta_{I_w}^*+\mcal{I}_F\D_F^*\mapsto \Delta_{\sigma(I_w)}^*+\mcal{I}_F^\text{sc}(\D_F^\text{sc})^*.\]
		By Lemma \ref{lem:sigma-invariant}, under this isomorphism, $\mrm{im}(\bar{c}_{R\llbracket\Lambda^\text{sc}\rrbracket_F}\circ \sigma_*)$ in $(\mcal{D}_F^\text{sc})^*/\mcal{I}_F^\text{sc}(\mcal{D}_F^\text{sc})^*$ corresponds to $\mrm{im}(\bar{c}_{R\llbracket\Lambda\rrbracket_F})$ in $\mcal{D}_F^*/\mcal{I}_F\mcal{D}_F^*$.
		
		By \cref{theo:Petrov}, there is an $R$-algebra isomorphism $\theta:(\epsilon\D_F^\text{sc})^*\to \mrm{h}^*(G^\text{sc}/B^{\text{sc}})$ such that $\mathfrak{c}_{G/B}=\theta\circ c_R^\text{sc}\circ \sigma_*$. In particular, $(\D_F^\text{sc})^*/\mcal{I}_F^\text{sc}(\D_F^\text{sc})^*\simeq \mrm{h}^*(G^\text{sc}/B^{\text{sc}})$, and $\mathfrak{c}_{G/B}=\theta \circ\bar{\xi^\text{sc}}\circ \bar{c}_{R\llbracket\Lambda^\text{sc}\rrbracket_F}\circ \sigma_*$.
		
		By \cref{theo:Gille}, there is an $R$-algebra isomorphism
		\[\mrm{h}^*(G)\simeq \mrm{h}^*(G^\text{sc}/B^{\text{sc}})/(c_1^{\mrm{h}^*}(\mcal{L}(\sigma(\lambda_1))),\dotsc,c_1^{\mrm{h}^*}(\mcal{L}(\sigma(\lambda_n))))=\mrm{h}^*(G^\text{sc}/B^{\text{sc}})/(\mrm{im}(\mathfrak{c}_{G/B}|_{\mcal{I}_F})),\]
		where $\mathfrak{c}_{G/B}|_{\mcal{I}_F}$ is the restriction of $\mathfrak{c}_{G/B}$ to $\mcal{I}_F$.
		Therefore, through the identification of $\mathfrak{c}_{G/B}$ and $\bar{c}_{R\llbracket\Lambda^\text{sc}\rrbracket_F}\circ \sigma_*$ via the isomorphism $\theta\circ \bar{\xi^\text{sc}}$, we get 
		\begin{align*}\mrm{h}^*(G)\simeq \mrm{h}^*(G^\text{sc}/B^{\text{sc}})/(\mrm{im}(\mathfrak{c}_{G/B}|_{\mcal{I}_F}))&\simeq \left((\mcal{D}_F^\text{sc})^*/\mcal{I}_F^\text{sc}(\mcal{D}_F^\text{sc})^*\right)/(\mrm{im}((\bar{c}_{R\llbracket\Lambda^\text{sc}\rrbracket_F}\circ \sigma_*)|_{\mcal{I}_F}))\\&\simeq \left(\mcal{D}_F^*/\mcal{I}_F\mcal{D}_F^*\right)/(\mrm{im}(\bar{c}_{R\llbracket\Lambda\rrbracket_F}|_{\mcal{I}_F}))\\&\simeq \D_F^*/(\mcal{I}_F\D_F^*+\mcal{C}_{R\llbracket\Lambda\rrbracket_F}).\end{align*}\qedhere
	\end{proof}
	
	Let $R\langle B^*\rangle $ be the free $R$-algebra generated by the symbols $\Delta_{I_w}^*$, $w\in W$. Let $\mcal{M}$ (for `multiplication') be the ideal in $R\langle B^*\rangle$ generated by the elements $\Delta_{I_w}^*\cdot \Delta_{I_{w'}}^*-\sum_{v\in W}\epsilon (q_{I_w,I_{w'}}^{I_v})\Delta_{I_v}^*$ over all $w,w'\in W$, where the $q_{I_w,I_{w'}}^{I_v}\in R\llbracket \Lambda\rrbracket_F$ are defined after \cref{eq:coproduct-coefficients}. Thus, $R\langle B^*\rangle/\mcal{M}\simeq\mcal{D}_F^*/\mcal{I}_F\mcal{D}_F^*$. Recall the $\mathbb{Z}$-basis $\{\lambda_i\}_{i=1}^n$ of $\Lambda$, and let $\mcal{A}'$ (for `addition') be the ideal in $R\langle B^*\rangle/\mcal{M}$ generated by the elements $\sum_{w\in W}\epsilon\left(\Delta_{I_w}(x_{\lambda_i})\right)\Delta_{I_w}^*$ over all $i\in[n]$. Let $\mcal{A}$ be the ideal in $R\langle B^*\rangle$ generated by the elements $\sum_{w\in W}\epsilon\left(\Delta_{I_w}(x_{\lambda_i})\right)\Delta_{I_w}^*$ over all $i\in[n]$. The following corollary gives a presentation for $\mrm{h}^*(G)$ in terms of generators and relations.
	\begin{cor}{\label{lem:structure-of-cohomology}}
		There is an isomorphism of $R$-algebras 
		\[\mrm{h}^*(G)\simeq R\langle B^*\rangle/(\mcal{M}+\mcal{A})\text{.}\]
	\end{cor}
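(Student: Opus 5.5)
The proof starts from \cref{prop:cohomology-model}, which gives $\mrm{h}^*(G)\simeq \D_F^*/(\mcal{C}_{R\llbracket\Lambda\rrbracket_F}+\mcal{I}_F\D_F^*)$. We rewrite the quotient in two stages: first quotient by $\mcal{I}_F\D_F^*$, then by the image of $\mcal{C}_{R\llbracket\Lambda\rrbracket_F}$.

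\textbf{Step 1: identify $R\langle B^*\rangle/\mcal{M}$ with $\D_F^*/\mcal{I}_F\D_F^*$.} By \cref{theo:basis-of-algebra-of-operators}, $\D_F^*$ is free over $R\llbracket\Lambda\rrbracket_F$ with basis $B^*$. Since $R\llbracket\Lambda\rrbracket_F/\mcal{I}_F\simeq R$ via $\epsilon$, the quotient $\D_F^*/\mcal{I}_F\D_F^*$ is free over $R$ on the images of $\Delta_{I_w}^*$. By \cref{eq:product-coefficients}, its multiplication is
\[\Delta_{I_w}^*\cdot\Delta_{I_{w'}}^*=\sum_{v\in W}\epsilon(q^{I_v}_{I_w,I_{w'}})\Delta_{I_v}^*.\]
Consider the $R$-algebra map $R\langle B^*\rangle\to\D_F^*/\mcal{I}_F\D_F^*$ sending each symbol to its class. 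It is surjective and kills $\mcal{M}$. For injectivity, $\mcal{M}$ lets us reduce any noncommutative monomial to an $R$-linear combination of single symbols $\Delta_{I_w}^*$. So $R\langle B^*\rangle/\mcal{M}$ is $R$-spanned by the classes of the $\Delta_{I_w}^*$, and these map to an $R$-basis. This spanning set is therefore a basis, and the map is an isomorphism. One detail needs care: the unit. The element $\Delta_{I_e}^*=\mathbf 1^*$ must act as the identity modulo $\mcal{M}$. This follows from the relations $\mathbf 1^*\cdot\Delta_{I_w}^*=\Delta_{I_w}^*$ inside $\mcal{M}$ (cf.\ \cref{egg:rank-1}). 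Hence the unit of $R\langle B^*\rangle$ and $\mathbf 1^*$ must be identified, or else the map must be taken as unital with $1\mapsto \mathbf 1^*$. This is the step I expect to need the most care, though it is routine.

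\textbf{Step 2: identify the image of $\mcal{C}_{R\llbracket\Lambda\rrbracket_F}$ with $\mcal{A}'$.} Modulo $\mcal{I}_F\D_F^*$, each generator $\mrm{ev}_s=\sum_w\Delta_{I_w}(s)\Delta_{I_w}^*$ with $s\in\mcal{I}_F$ becomes $\sum_w\epsilon(\Delta_{I_w}(s))\Delta_{I_w}^*=\bar c(s)$. The map $\bar c=\pi\circ c_{R\llbracket\Lambda\rrbracket_F}$ is a continuous $R$-algebra homomorphism. Moreover, $\mcal{I}_F$ is the closed ideal generated by $x_{\lambda_1},\dots,x_{\lambda_n}$: by \cite[Cor.~2.13]{CPZ}, $R\llbracket\Lambda\rrbracket_F\simeq R\llbracket x_1,\dots,x_n\rrbracket$ with $x_i=x_{\lambda_i}$. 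Any $s\in\mcal{I}_F$ can thus be written $s=\sum_i x_{\lambda_i}u_i$, so $\bar c(s)=\sum_i\bar c(x_{\lambda_i})\bar c(u_i)$. Continuity is not an issue, because the target is a finite free $R$-module and $\bar c$ factors through a finite-order truncation: $\epsilon\Delta_{I_w}$ kills high powers of $\mcal{I}_F$. The image of $\mcal{C}$ in the quotient is therefore the ideal generated by $\bar c(x_{\lambda_i})=\sum_w\epsilon(\Delta_{I_w}(x_{\lambda_i}))\Delta_{I_w}^*$, which is exactly $\mcal{A}'$ under Step 1.

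\textbf{Step 3: conclude.} Combining the two steps,
\[\mrm{h}^*(G)\simeq(R\langle B^*\rangle/\mcal{M})/\mcal{A}'.\]
The preimage of $\mcal{A}'$ in $R\langle B^*\rangle$ is $\mcal{M}+\mcal{A}$, so this quotient equals $R\langle B^*\rangle/(\mcal{M}+\mcal{A})$, as claimed.
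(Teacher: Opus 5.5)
Your proposal follows the paper's proof. You start from \cref{prop:cohomology-model}, identify $\D_F^*/\mcal{I}_F\D_F^*$ with $R\langle B^*\rangle/\mcal{M}$ (the paper asserts this in one line before the corollary), show that the image of $\mcal{C}_{R\llbracket\Lambda\rrbracket_F}$ is generated by the $\mrm{ev}_{x_{\lambda_i}}$ and so corresponds to $\mcal{A}'$, and take preimages. Your basis argument and the exact identity $s=\sum_i x_{\lambda_i}u_i$ fill in what the paper leaves implicit, and that identity makes your continuity remark unnecessary.

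The unit issue you flag is real, and the paper glosses over it. Every generator of $\mcal{M}$ and $\mcal{A}$ lies in the ideal spanned by words of positive length, so $1-\Delta_{I_e}^*\notin\mcal{M}+\mcal{A}$, and the literal quotient is $R\times\mrm{h}^*(G)$. Your first remedy, imposing $1=\Delta_{I_e}^*$, is the correct one; the paper does this tacitly in \cref{section:Examples-in-rank-2}. Your alternative of `taking the map unital' fixes nothing: the map is automatically unital and still kills $1-\Delta_{I_e}^*$.
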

	\begin{proof}
		By \cref{prop:cohomology-model}, we have
		\[\mrm{h}^*(G)\simeq \D_F^*/(\mcal{C}_{R\llbracket\Lambda\rrbracket_F}+\mcal{I}_F\D_F^*).\] As  $c_{R\llbracket\Lambda\rrbracket_F}$ is a ring homomorphism and $\Lambda$ is spanned by the $\lambda_i$, $i\in [n]$, we see that $\mcal{C}_{R\llbracket\Lambda\rrbracket_F}$ is generated by the elements $\mrm{ev}_{x_{\lambda_i}}$, $i\in[n]$. Thus, the ideal $(\mcal{C}_{R\llbracket\Lambda\rrbracket_F}+\mcal{I}_F\D_F^*)/\mcal{I}_F\D_F^*$ in $\D_F^*/\mcal{I}_F\D_F^*$ corresponds to $\mcal{A}'$ under the identification $\D_F^*/\mcal{I}_F\D_F^*\simeq R\langle B^*\rangle/\mcal{M}$. Therefore,
		\[
		\D_F^*/(\mcal{C}_{R\llbracket\Lambda\rrbracket_F}+\mcal{I}_F\D_F^*)\simeq (\D_F^*/\mcal{I}_F\D_F^*)/\left((\mcal{C}_{R\llbracket\Lambda\rrbracket_F}+\mcal{I}_F\D_F^*)/\mcal{I}_F\D_F^*\right)\simeq (R\langle B^*\rangle/\mcal{M})/\mcal{A}'\simeq R\langle B^*\rangle/(\mcal{M}+\mcal{A}).	
		\]
	\end{proof}

	We now describe a technique to present $\mrm{h}^*(G)$ in terms of the generators and relations of \cref{lem:structure-of-cohomology}.
	
	\begin{alg}\label{alg:only}
		\begin{enumerate}
			\item Compute the coproduct coefficients $p_{E_1,E_2}^{I_v}$ of \cref{prop:cocommutative-coproduct}:
			\[\triangle(\Delta_{I_v})=\sum_{E_1,E_2\subseteq [l(v)]}p_{E_1,E_2}^{I_v}\Delta_{I_v|_{E_1}}\otimes \Delta_{I_v|_{E_2}}.\] 
			\item Express each $\Delta_{I_v|_{E_1}}$, $\Delta_{I_v|_{E_2}}$ in the formula above as an $R\llbracket\Lambda\rrbracket_F$-linear combination of the $\Delta_{I_w}$, using \cref{prop:presentation-demazure-operators}. Thus, 
			\[\triangle(\Delta_{I_v})=\sum_{w,w'\in W}q_{I_w,I_{w'}}^{I_v}\Delta_{I_{w}}\otimes \Delta_{I_{w'}},\quad q_{I_w,I_{w'}}^{I_v}\in R\llbracket\Lambda\rrbracket_F.\]
			\item Express each coefficient $q_{I_w,I_{w'}}^{I_v}$ as the (unique) power series over $R$ in the variables $x_{\lambda_i}$. The constant term in this power series is precisely $\epsilon(q_{I_w,I_{w'}}^{I_v})$.
			\item Express each $\Delta_{I_w}(x_{\lambda_j})$ as the (unique) power series over $R$ in the variables $x_{\lambda_i}$. The constant term in this power series is precisely $\epsilon(\Delta_{I_w}(x_{\lambda_i}))$.
			\item Now plug $\epsilon(q_{I_w,I_{w'}}^{I_v})$ and $\epsilon(\Delta_{I_w}(x_{\lambda_i}))$ into the relations $\mathcal{M}$ and $\mathcal{A}$ of \cref{lem:structure-of-cohomology}.
		\end{enumerate}
	\end{alg}
	
	\section{Computations for the groups of types $A_1$, $A_2$, and $B_2$}\label{section:Examples-in-rank-2}
	
	In this section, we compute minimal presentations for $\mrm{h}^*(G)$ in terms of generators and relations, where $G$ is an adjoint or simply-connected algebraic group of type $A_1$, $A_2$, or $B_2$. The main results of this section are summarized in \cref{tab:title}. Note that the column $K^0(G)$ (i.e., the Grothendieck ring of vector bundles on $G$) is the specialization of the general $\mrm{h}^*(G)$ column at $a_{11}=1$ and $a_{ij}=0$ for all $i+j\geq 3$; and the column $\mrm{CH}^*(G)$ (i.e., the Chow ring of $G$) is the specialization of the $\mrm{h}^*(G)$ column at $a_{ij}=0$ for all $i+j\geq 2$. 
	
	\begin{defn}
		If $G$ is simple, simply-connected (resp. adjoint), and of Dynkin type $\mcal{D}$, let $\Lambda^\text{sc}_{\mcal{D}}$ (resp. $\Lambda^\text{ad}_{\mcal{D}}$) be the character lattice for $T$.
	\end{defn}
	\begin{table}[!htb]
		\caption {Oriented cohomology rings of the adjoint/simply-connected groups for $A_1$, $A_2$, $B_2$} \label{tab:title} 
		\begin{center}
			\begin{tabular}{ |c |c |c |c |c |c |c |c | }
				\hline
				Rank & $\Sigma$ & $\Lambda$ & $G$ & $\mrm{h}^*(G)$ & $\Omega^*(G)$ & $K^0(G)$ & $\mathrm{CH}^*(G)$ \\
				\hline
				\hline
				$1$ &$A_1$ &  $\Lambda_{A_1}^{\text{ad}}$ & $\mrm{PGL}(2,k)$ & $\tfrac{R[x]}{(2x,x^2)}$ & $\tfrac{\mathbb{L}[x]}{(2x,x^2)}$ & $\tfrac{\mathbb{Z}[x]}{(2x,x^2)}$ & $\tfrac{\mathbb{Z}[x]}{(2x,x^2)}$ \\
				&& $\Lambda_{A_1}^{\text{sc}}$ & $\mrm{SL}(2,k)$ & $R$ & $\mathbb{L}$ & $\mathbb{Z}$ & $\mathbb{Z}$ \\
				\hline
				$2$ &$A_2$ &$ \Lambda_{A_2}^{\text{ad}}$ & $\mrm{PGL}(3,k)$ & $\tfrac{R[x]}{(3x,x^3)}$ & $\tfrac{\mathbb{L}[x]}{(3x,x^3)}$ & $\tfrac{\mathbb{Z}[x]}{(3x,x^3)}$ & $\tfrac{\mathbb{Z}[x]}{(3x,x^3)}$ \\
				&& $\Lambda_{A_2}^{\text{sc}}$ & $\mrm{SL}(3,k)$ & $R$ & $\mathbb{L}$ & $\mathbb{Z}$ & $\mathbb{Z}$\\
				$2$ &$B_2$ & $\Lambda_{B_2}^{\text{ad}}$ & $\mrm{SO}(5,k)$ & $\tfrac{R[x]}{(2x-a_{11}x^2,2x^2,x^4)}$ & $\tfrac{\mathbb{L}[x]}{(2x-a_{11}x^2,2x^2,x^4)}$ & $\tfrac{\mathbb{Z}[x]}{(2x-x^2,x^3)}$ & $\tfrac{\mathbb{Z}[x]}{(2x,x^4)}$ \\
				&& $\Lambda_{B_2}^\text{sc}$ & $\mrm{Spin}(5,k)$ & $R$ & $\mathbb{L}$ & $\mathbb{Z}$ & $\mathbb{Z}$\\
				\hline
			\end{tabular}
		\end{center} 
	\end{table}
	
	\begin{rmk}
		The group $\mrm{Spin}(5,k)$ is isomorphic to the group $\mrm{Sp}(4,k)$, since they have isomorphic root systems ($C_2\simeq B_2$) and fundamental groups. Similarly, $\mathrm{SO}(5,k)$ is isomorphic to $\mrm{PSp}(4,k)$.
	\end{rmk}

	If $G$ has rank $1$ or $2$, then the root datum corresponding to $G$ has rank $1$ or $2$, respectively. The Python code in \cite{G} generates the relations $\mcal{M}$ and $\mcal{A}$ of \cref{lem:structure-of-cohomology} for the groups in \cref{tab:title}. To obtain the presentations that appear in \cref{tab:title}, we simplify the list of relations generated by the code by hand. We write out the full details of this computation for the groups $\mrm{PGL}(2,k)$, $\mrm{PGL}(3,k)$, $\mrm{SL}(2,k)$, and $\mrm{SL}(3,k)$ in \cref{theo:A1xA1} and \cref{theo:A2}. In \cref{theo:B2} (i.e., in computing the ring presentations for  $\mrm{h}^*(\mrm{SO}(5,k))$ and $\mrm{h}^*(\mrm{Spin}(5,k))$), we omit some details, as the computations are lengthier while the method is the same as for \cref{theo:A1xA1} and \cref{theo:A2}.

	We begin by computing minimal presentations in terms of generators and relations for the rank $1$ groups.
	
	\begin{egg}\label{theo:A1xA1}
		We will show that
		\[\mrm{h}^*(\mrm{PGL}(2,k))\simeq R[x]/(2x,x^2);\quad \quad \mrm{h}^*(\mrm{SL}(2,k)) \simeq R.\]
		The root system for both groups considered in this theorem is $A_1$. Choose a simple root $\alpha$. From the code in \cite{G}, the ideal $\mcal{M}$ is generated by the following relation:
		\[
		\begin{tabular}{ l l  }
			$\text{(1) }  \Xa\Xa= 0$.
		\end{tabular}
		\]
		The ideal $\mcal{A}$ depends on the character lattice $\Lambda$. There are two cases to consider, depending on $\Lambda$.\newline
		
		\underline{Case 1: $\Lambda=\Lambda_{A_1}^{\text{sc}}$.}  Let $\lambda_1=\tfrac{1}{2}\alpha$, as in \cref{egg:A1xA1}. The ideal $\mcal{A}$ is generated by the relation:
		\[
		\begin{tabular}{ l l  }
			$\text{(a) } 0=\mrm{ev}_{x_{\lambda_1}}=\Xa$.
		\end{tabular}
		\]
		It is immediate that
		\[\mrm{h}^*(\mrm{SL}(2,k))\simeq R.\]

		\underline{Case 2: $\Lambda=\Lambda_{A_1}^{\text{ad}}$.} The ideal $\mcal{A}$ is generated by the following relation:
		
		\[
		\begin{tabular}{ l l  }
			$\text{(a) } 0=\mrm{ev}_{x_{\alpha}}=2\Xa$.
		\end{tabular}
		\]
		
		Therefore,
		\[\mrm{h}^*(\mrm{PGL}(2,k))\simeq R[x]/(2x,x^2),\quad \text{via} \quad \Delta_\alpha^*\mapsto x.\]
	\end{egg}

	If $G$ has rank $2$, then fix a basis $\{\lambda_1,\lambda_2\}$ for the lattice $\Lambda$, and a simple system $\Delta=\{\alpha,\beta\}$ for the root system $\Sigma$. If $\Lambda$ is simply-connected, we assume that the $\lambda_i$ are the fundamental weights with respect to $\Delta$. 
	The Weyl group $W$ is generated by the simple reflections $s_\alpha$ and $s_\beta$, and  $s_\alpha s_\beta$ has order $m\in \{2,3,4,6\}$ in $W$.
	If $w\in W$ is not the longest word, then there is exactly one choice for the sequence $I_w$. If $w\in W$ is the longest word, then there are exactly two choices for the sequence $I_w$. If $w$ is the longest word in $W$, we choose $I_w=\underbrace{(\alpha, \beta, \alpha,\dotsb)}_{\text{$m$ times}}$. Thus, the set $Y=\{I_w\}_{w\in W}$ is 
	\[Y=\{1,\alpha,\beta,(\alpha,\beta),(\beta,\alpha),\underbrace{(\alpha,\beta,\alpha\dotsb)}_{\text{$m-1$ times}},\underbrace{(\beta, \alpha, \beta,\dotsb)}_{\text{$m-1$ times}},\underbrace{(\alpha, \beta, \alpha,\dotsb)}_{\text{$m$ times}}\}.\] 
	
	\begin{egg}\label{theo:A2}
		We will show that
		\[\mrm{h}^*(\mrm{PGL}(3,k))\simeq R[x]/(3x,x^3);\quad \quad \mrm{h}^*(\mrm{SL}(3,k))\simeq R.\]
		The root system for both of the groups considered in this theorem is $A_2$. From the code in \cite{G}, the ideal $\mcal{M}$ is generated by the following relations:
		\[
		\begin{tabular}{ l l  }
			$\text{(1) }  \Xa\Xa= \Xba$;\textcolor{white}{} & $\text{(2) }  \Xa\Xb=\Xab+\Xba-a_{11}\Xaba$; \\
			$\text{(3) }  \Xa\Xab=\Xaba$; & $\text{(4) } \Xa\Xba=0$; \\
			$\text{(5) }  \Xa\Xaba=0$; & $\text{(6) } \Xb\Xb=\Xab$;  		\\
			$\text{(7) }  \Xb\Xab=0$; & $\text{(8) }  \Xb\Xba=\Xaba$;   \\
			$\text{(9)  }   \Xb\Xaba=0$; & $\text{(10) }  \Xab\Xab=0$;  		 \\
			$\text{(11) }  \Xab\Xba=0$; & $\text{(12) }  \Xab\Xaba=0$;  	\\
			$\text{(13) }  \Xba\Xba=0$; & $\text{(14) }  \Xba\Xaba=0$; \\
			$\text{(15) }  \Xaba\Xaba=0$. & 
		\end{tabular}
		\]
		The ideal $\mcal{A}$ depends on the character lattice $\Lambda$. There are two cases to consider, depending on $\Lambda$.\newline
		
		\underline{Case 1: $\Lambda=\Lambda_{A_2}^{\text{sc}}$.} As in \cref{egg:A2}, choose $\lambda_1$ and $\lambda_2$ such that 
		\[\alpha^\vee(\lambda_1)=1\quad \text{and}\quad\alpha^\vee(\lambda_2)=0\quad \text{and}\quad\beta^\vee(\lambda_1)=0\quad \text{and}\quad\beta^\vee(\lambda_2)=1.\]
		Then $s_\alpha(\lambda_1)=\lambda_1-\alpha$ and $s_\alpha(\lambda_2)=\lambda_2$ and $s_\beta(\lambda_1)=\lambda_1$ and $s_\beta(\lambda_2)=\lambda_2-\beta$. Thus,
		\[\Delta_\alpha(x_{\lambda_2})=\Delta_\beta(x_{\lambda_1})=0.\]
		In addition,
		\[\epsilon\Delta_\alpha(x_{\lambda_1})=\epsilon\left(\tfrac{x_{\lambda_1}-x_{\lambda_1-\alpha}}{x_\alpha}\right)=\epsilon\left(\tfrac{x_{\lambda_1}-(x_{\lambda_1}+_Fx_{-\alpha})}{x_\alpha}\right)=1.\]
		Similarly, $\epsilon\Delta_\beta(x_{\lambda_2})=1$. 
		Therefore, there exist $r_{I_1}^{I_2}\in R$ such that the ideal $\mcal{A}$ is generated by the relations
		\begin{enumerate}[label=(\alph*)]
			\item $0=\mrm{ev}_{x_{\lambda_1}}=\Xa+r_1^{\ab}\Xab+r_1^{\ba}\Xba+r_1^{\aba}\Xaba$,
			\item $0=\mrm{ev}_{x_{\lambda_2}}=\Xb+r_2^{\ab}\Xab+r_2^{\ba}\Xba+r_2^{\aba}\Xaba$.
		\end{enumerate}
		
		Relations (a), (3), (10), (11), and (12) imply: (i) $\Delta_{\aba}^*=0$. 
		
		Relations (i), (a), (1), (3), and (4) imply: (ii) $\Delta_{\ba}^*=0$.
		
		Relations (i), (ii), (b), (6), and (7) imply: (iii) $\Delta_{\ab}^*=0$.
		
		Relations (i), (ii), (iii), (a), and (b) imply: (iv) $\Delta_\alpha^*=\Delta_\beta^*=0$.
		
		\cref{prop:cohomology-model} and \cref{lem:structure-of-cohomology} tell us that the relations (a)-(b) and (1)-(15) form a complete set of relations in $\mrm{h}^*(\mrm{SL}(3,k))$. Therefore,
		\[\mrm{h}^*(\mrm{SL}(3,k))\simeq R.\]
		\newline
		\underline{Case 2: $\Lambda=\Lambda_{A_2}^{\text{ad}}$.} The output of the code in \cite{G} tells us that $\mcal{A}$ is generated by the following relations:
		\begin{enumerate}[label=(\alph*)]
			\item $0=\mrm{ev}_{x_\alpha}=2\Xa-\Xb-2a_{11}\Xab+a_{11}\Xba-a_{11}^2\Xaba$,
			\item $0=\mrm{ev}_{x_\beta}=-\Xa+2\Xb+a_{11}\Xab-2a_{11}\Xba-(a_{11}^2+3a_{12})\Xaba$.
		\end{enumerate}
		\noindent Relations (b), (3), (7), (10), (11), and (12) imply: (i) $\Xaba=0$.
		
		\noindent Relations (i), (b), (2), (6), (7), and (8) imply: \hspace{0.43cm} (ii) $\Xab=\Xba$.
		
		\noindent Relations (i), (ii), (a), and (b) imply: \hspace{1.7cm} (iii) $3\Xb=3a_{11}\Xab$.
		
		\noindent Relations (iii), (6), and (7) imply:\hspace{2.4cm} (iv) $3\Xb=3\Xab=0$.
		
		\noindent Relations (ii), (b), (6), and (7) imply: \[\text{(v) } \Xb\Xb\Xb=0;\quad  \text{(vi) } \Xba=\Xab=\Xb\Xb; \quad \text{(vii)  } \Xa=2\Xb-a_{11}\Xb\Xb.\]

		In summary, $\mrm{h}^*(\mrm{PGL}(3,k))$ is generated as a ring by $\Xb$, and $\Xb$ satisfies $3\Xb=\Xb\Xb\Xb=0$.
		Moreover, the relations (a)-(b) and (1)-(15) are consequences of the relations $3\Xb=\Xb\Xb\Xb=0$. By \cref{prop:cohomology-model} and \cref{lem:structure-of-cohomology}, the relations (a)-(b) and (1)-(15) form a complete set of relations for $\mrm{h}^*(\mrm{PGL}(3,k))$. Therefore,  
		\[\mrm{h}^*(\mrm{PGL}(3,k))\simeq  R[x]/(3x,x^3),\quad\text{via}\quad \Xb\mapsto x.\qedhere\]
	\end{egg}
	
	\begin{egg}\label{theo:B2}
		We will show that 
		\[\mrm{h}^*(\mrm{SO}(5,k))\simeq R[x]/(2x-a_{11}x^2,2x^2,x^4);\quad \mrm{h}^*(\mrm{Spin}(5,k))\simeq R.\]
		The root system for both of the groups considered in this theorem is $B_2$.  The output of the code in \cite{G} tells us that the ideal $\mcal{M}$ is generated by the relations below. Note: we write $q_{I_w,I_{w'}}^{I_v}$ below, but we really mean $\epsilon(q_{I_w,I_{w'}}^{I_v})\in R$. We make this notational change to reduce the clutter in the displayed relations. We do not need to know the explicit formulas for the coefficients denoted $q_{I_w,I_{w'}}^{I_v}$ in this computation, as the computation will show that the variables they sit in front of equal $0$.
		\begin{enumerate}
			\item $\Xa\Xa= \Xba+q_{\aaa,\aaa}^{\abab}\Xabab$,
			\item $\Xa\Xb=\Xab+\Xba-a_{11}\Xaba-a_{11}\Xbab+q_{\aaa,\bbb}^{\abab}\Xabab$,
			\item $\Xa\Xab=\Xaba+\Xbab+q_{\aaa,\ab}^{\abab}\Xabab$,
			\item $\Xa\Xba=\Xaba+q_{\aaa,\ba}^{\abab}\Xabab$,
			\item $\Xa\Xbab=\Xabab$,
			\item $\Xb\Xb=2\Xab-a_{11}\Xbab+q_{\bbb,\bbb}^{\abab}\Xabab$,
			\item $\Xb\Xab=\Xbab+q_{\bbb,\ab}^{\abab}\Xabab$,
			\item $\Xb\Xba=2\Xaba+\Xbab+q_{\bbb,\ba}^{\abab}\Xabab$,
			\item $\Xb\Xaba=\Xabab$,
			\item $\Xab\Xab=q_{\ab,\ab}^{\abab}\Xabab$,
			\item $\Xab\Xba=q_{\ab,\ba}^{\abab}\Xabab$,
			\item $\Xba\Xba=q_{\ba,\ba}^{\abab}\Xabab$,
			\item $\Delta^*_{I_w}\Delta^*_{I_{w'}}=0$ for all two-element subsets $\{I_w,I_{w'}\}$ of $\{I_w\}_{w\in W}$ that do not appear above.
		\end{enumerate}	
		The ideal $\mcal{A}$ depends on the character lattice $\Lambda$. There are two cases to consider, depending on $\Lambda$.\newline

		\underline{Case 1: $\Lambda=\Lambda_{B_2}^{\text{sc}}$.} As in \cref{egg:B2}, choose $\lambda_1$ and $\lambda_2$ such that 
		\[\alpha^\vee(\lambda_1)=1\quad \text{and}\quad\alpha^\vee(\lambda_2)=0\quad \text{and}\quad\beta^\vee(\lambda_1)=0\quad \text{and}\quad\beta^\vee(\lambda_2)=1.\]
		Then $s_\alpha(\lambda_1)=\lambda_1-\alpha$ and $s_\alpha(\lambda_2)=\lambda_2$ and $s_\beta(\lambda_1)=\lambda_1$ and $s_\beta(\lambda_2)=\lambda_2-\beta$. Thus,
		\[\Delta_\alpha(x_{\lambda_2})=\Delta_\beta(x_{\lambda_1})=0.\]
		In addition,
		\[\epsilon\Delta_\alpha(x_{\lambda_1})=\epsilon\left(\tfrac{x_{\lambda_1}-x_{\lambda_1-\alpha}}{x_\alpha}\right)=\epsilon\left(\tfrac{x_{\lambda_1}-(x_{\lambda_1}+_Fx_{-\alpha})}{x_\alpha}\right)=1.\]
		Similarly, $\epsilon\Delta_\beta(x_{\lambda_2})=1$. 
		Therefore, there exist $r_{I_1}^{I_2}\in R$ such that the ideal $\mcal{A}$ is generated by the relations
		\begin{enumerate}[label=(\alph*)]
			\item $0=\mrm{ev}_{x_{\lambda_1}}=\Xa+r_1^{\ab}\Xab+r_1^{\ba}\Xba+r_1^{\aba}\Xaba+r_1^{\bab}\Xbab+r_{1}^{\abab}\Xabab$,
			\item $0=\mrm{ev}_{x_{\lambda_2}}=\Xb+r_2^{\ab}\Xab+r_2^{\ba}\Xba+r_2^{\aba}\Xaba+r_2^{\bab}\Xbab+r_{2}^{\abab}\Xabab$.
		\end{enumerate}
		
		The rest of this calculation is performed by hand and is similar to the calculation in \cref{theo:A2}.
		\newline

		\underline{Case 2: $\Lambda=\Lambda_{B_2}^{\text{ad}}$.} The code in \cite{G} tells us that there are $r_{I_1}^{I_2}\in R$ such that the ideal $\mcal{A}$ is generated by the following relations:
		\begin{enumerate}[label=(\alph*)]
			\item \small{$0=\mrm{ev}_{x_\alpha}=2\Xa-\Xb-2a_{11}\Xab+a_{11}\Xba-(a_{11}^2+2a_{12})\Xbab+r_{\aaa}^{\abab}\Xabab$,}
			\item \small{$0=\mrm{ev}_{x_\beta}=-2\Xa+2\Xb+2a_{11}\Xab-3a_{11}\Xba-4(a_{11}^2+a_{12})\Xaba-a_{11}^2\Xbab+r_{\bbb}^{\abab}\Xabab$.}
		\end{enumerate}

		The rest of this calculation is performed by hand and is similar to the calculation in \cref{theo:A2}.
	\end{egg}
	
	\begin{rmk}
		The relations $\mcal{A}$ and $\mcal{M}$ for the group $G_2$ can be found in the output of \cite{G}. However, as there are a large number of relations in this case, we do not attempt to compute a minimal ring presentation for the ring $\mrm{h}^*(G_2)$ at this time.
	\end{rmk}

	
	\bibliographystyle{alphaurl}
	\bibliography{biblist}

\end{document}